\theoremstyle{plain}
\newtheorem{thm}{Theorem}
\newtheorem*{thm*}{Theorem}
\newtheorem{lem}{Lemma}
\newtheorem*{lem*}{Lemma}
\newtheorem{defi}{Definition}
\newtheorem*{defi*}{Definition}
\newtheorem*{rques*}{Remarks}
\DeclareMathOperator{\diag}{diag}
\DeclareMathOperator{\sign}{sign}
\begin{document}

\begin{frontmatter}
\title{Improved clustering algorithms for the Bipartite Stochastic Block Model}
\runtitle{ clustering for the Bipartite Stochastic Block Model}
%\thankstext{T1}{Footnote to the title with the ``thankstext'' command.}

\begin{aug}
\author{\fnms{Mohamed} \snm{Ndaoud}\thanksref{m1}\ead[label=e1]{ndaoud@usc.edu}},
\author{\fnms{Suzanne} \snm{Sigalla}\thanksref{m2}\ead[label=e2]{suzanne.sigalla@ensae.fr}}
\and
\author{\fnms{Alexandre B.} \snm{Tsybakov}\thanksref{m2}
\ead[label=e3]{alexandre.tsybakov@ensae.fr}}

%\thankstext{t1}{Some comment}
%\thankstext{t2}{First supporter of the project}
%\thankstext{t3}{Second supporter of the project}
%\runauthor{F. Author et al.}

\affiliation{University of Southern California\thanksmark{m1} and CREST, ENSAE\thanksmark{m2}}

\address{Department of Mathematics\\
University of Southern California\\
Los Angeles, CA 90089 \\
\printead{e1}}
%\phantom{E-mail:\ }\printead*{e2}}

\address{CREST (UMR CNRS 9194), ENSAE\\
5, av. Henry Le Chatelier, 91764 Palaiseau, France\\
\printead{e2};\quad \printead*{e3}}
%\printead{e3}\\
%\printead{u1}}
\end{aug}

\begin{abstract}
%Consider a Bipartite Stochastic Block Model (BSBM) on vertex sets $V_1$ and $V_2$. 
We establish sufficient conditions of exact and almost full recovery of the node partition
in Bipartite Stochastic Block Model (BSBM) 
using polynomial time algorithms. 
First, we improve upon the known conditions of almost full recovery by spectral clustering algorithms in BSBM. Next, we propose a new computationally simple and fast procedure achieving exact recovery under milder conditions than the state of the art. 
Namely, if the vertex sets $V_1$ and $V_2$ in BSBM have sizes $n_1$ and $n_2$,  we show that the condition
$
p = \Omega\left(\max\left(\sqrt{\frac{\log{n_1}}{n_1n_2}},\frac{\log{n_1}}{n_2}\right)\right)
$
on the edge intensity $p$
is sufficient for exact recovery witin $V_1$.
This condition exhibits  an elbow at $n_{2} \asymp n_1\log{n_1}$ between the low-dimensional and high-dimensional regimes.
The suggested procedure is a variant of Lloyd's iterations initialized with a well-chosen spectral estimator leading to what we expect to be the optimal condition for exact recovery in BSBM. {The optimality conjecture is supported by showing that, for a supervised oracle 
procedure, such a condition is necessary to achieve exact recovery.} The key elements of the proof techniques are different from classical community detection tools on random graphs.
Numerical studies confirm our theory, and show that the suggested algorithm is both very fast and achieves {almost the same}  performance as the supervised oracle. Finally, using the connection between planted satisfiability problems and the BSBM, we improve upon the sufficient number of clauses to completely recover the planted assignment. 
\end{abstract}

%\begin{keyword}[class=MSC]
%\kwd[Primary ]{60K35}
%kwd{60K35}
%\kwd[; secondary ]{60K35}
%\end{keyword}

%\begin{keyword}
%\kwd{sample}
%\kwd{\LaTeXe}
%\end{keyword}

\end{frontmatter}

\section{Introduction}

Unsupervised learning or clustering is a recurrent problem in statistics and machine learning. Depending on the objects we wish to classify, we can generally consider two approaches: either the observed objects are individuals without any interaction, which is often described by a mixture model, or the observed objects are individuals with interactions, which is described by a graph model. In the latter case, the individuals correspond to vertices of the graph and two vertices are connected if the two corresponding individuals interact. The clustering problem becomes then a node clustering problem, which means grouping the individuals by communities. The most known and studied framework for node clustering is  the Stochastic Block Model (SBM), cf. \cite{holland1983stochastic}. In this paper, we focus on the Bipartite Stochastic Block Model (BSBM), cf. \cite{feldman2015subsampled}, which is a non-symmetric generalization of the SBM. This model arises in several fields of applications. For example, it can be used to describe different types of interactions; documents/words \cite{word_object,word}, genes/genetic sequences \cite{gene,gene2} and objects/users in recommendation systems \cite{reco}. Some other examples are related to random computational problems with planted solutions such as planted satisfiability problems, cf. \cite{feldman2018complexity} for a general definition. As shown in \cite{feldman2015subsampled}, three planted satisfiability problems reduce to solving the BSBM. Namely, this concerns planted hypergraph partitioning, planted random $k-$SAT, and Goldreich's planted CSP. Planted satisfiability can be viewed as a $k-$uniform hypegraph stochastic block model. The corresponding reduction to BSBM is characterized by a high imbalance between its two dimensions. For instance, one dimension is $n$ while the other is $n^{r-1}$, where $n$ is the number of boolean literals and $r$ (that can be large) is the distribution complexity of the model that we define later.

\subsection{Definition of Bipartite Stochastic Block Model}

%\begin{defi}[Bipartite Stochastic Block Model]

Let $n_{1+}$, $n_{1-}$, $n_{2+}$ and $n_{2-}$ be four integers such that $n_1 := n_{1+} + n_{1-} \leq n_{2+} + n_{2-} := n_2$, where $n_1\ge 2$, $n_2\ge2$, and let $\delta \in (0, 2)$, $p \in \left(0,1/2\right)$. Consider two sets of vertices $V_1$ and $V_2$ such that:
\begin{itemize}[label=$\--$]
\item $V_1$ is composed of $n_{1+}$ vertices with label $+1$ and of $n_{1-}$ vertices with label $-1$;
\item $V_2$ is composed of $n_{2+}$ vertices with label $+1$ and of $n_{2-}$ vertices with label $-1$. 
\end{itemize}

  We denote by $\sigma(u)\in \{-1,1\}$ the label corresponding to vertex $u$. {We call $|n_{1+} - n_{1-}|/n_1$ (respectively, $|n_{2+} - n_{2-}|/n_2$) the imbalance of the set $V_1$ (respectively, $V_2$).
  In what follows, it is assumed that there exist $\gamma_i\in (0,1), i=1,2,$ such that
  \begin{equation}\label{imb}
   |n_{1+} - n_{1-}|/n_1\le \gamma_1, \qquad
   |n_{2+} - n_{2-}|/n_2\le \gamma_2.
  \end{equation}}
Let $A$ denote the biadjacency matrix, i.e., a rectangular matrix of size $n_1 \times n_2$ whose entries $A_{ij}$ take value 1 if the two corresponding vertices $i\in V_1$ and $j\in V_2$ are connected and take value $A_{ij}=0$ otherwise. 

We say that matrix $A$ is drawn according to the $BSBM(\delta, n_{1+}, n_{1-}, n_{2+}, \allowbreak n_{2-},p)$ model if the entries $A_{ij}$ are independent and
\begin{itemize}[label=\textbullet]
\item $A_{ij} \sim Ber(\delta p)$ if $\sigma(i)=\sigma(j)$, i.e., two vertices $i \in V_1$, $j \in V_2$ with the same label are connected with probability $\delta p$; 
\item $A_{ij} \sim Ber((2-\delta) p)$ if $\sigma(i)\neq\sigma(j)$, i.e., two vertices $i \in V_1$, $j \in V_2$ with different labels are connected with a probability $(2-\delta) p$. 
\end{itemize}

Here, $Ber(q)$ denotes the Bernoulli distribution with parameter $q \in (0,1)$.

In this definition, $p$ represents the overall edge density. The Bipartite SBM is a generalization of the SBM in the sense that we obtain the SBM if $V_1=V_2$. Another possible definition of BSBM is obtained by fixing only $n_1$ and $n_2$ and letting $n_{1+}, n_{1-}, n_{2+}, n_{2-}$ be random variables such that the expectations of $n_{i+}$ and $n_{i-}$ are both equal to $n_i/2$ for $i=1,2$ (then the partitions are called balanced). This is the case when the labels are independent Rademacher random variables as assumed, for example, in the previous work \cite{feldman2015subsampled,florescu2016spectral}.

\subsection{Recovery of communities}

Assume that we observe a biadjacency matrix $A$ drawn according to 
a $BSBM(\delta, n_{1+}, n_{1-}, n_{2+}, n_{2-}, p)$ model. We consider the problem of recovering the node partition associated with $V_1$ %, which is the set of vertices of smaller size, 
from the observation of  $A$. Denote by $\eta_1 \in \{\pm 1\}^{n_1}$ the vector of vertex labels in $V_1$.
Recovering the node partition of $V_1$ is equivalent to retrieving either $\eta_1$ or $-\eta_1$.

%labels and $\eta_i$ the corresponding $i$\up{th} component for $i=1,\dots,n_1$.  \\

As estimators of $\eta_1$ we consider any measurable functions $\hat{\eta}$ of $A$ taking values in $\{ \pm 1 \}^{n_1}$. We characterize the loss of any such estimator $\hat{\eta}$ by the {$\ell_1$-distance between $\hat{\eta}$ and $\eta_1$, that is, by twice} the number of positions at which $\hat{\eta}$ and $\eta_1$ differ:
\begin{align*}
|\hat{\eta}-\eta_1| :=\sum_{i=1}^{n_1}\left|\hat{\eta}_{i}-\eta_{1i}\right|=2 \sum_{i=1}^{n_1} \mathds{1}\left(\hat{\eta}_{i} \neq \eta_{1i}\right),
\end{align*}
where $\hat{\eta}_i$ and $\eta_{1i}$ denote the $i$th components of $\hat{\eta}$ and $\eta_1$, respectively. Since for community detection it is enough to determine either $\eta_1$ or $-\eta_1$
we consider the loss 
$$
r(\eta_1, \hat{\eta}) = \min_{\nu\in \{-1,1\}}|\hat{\eta}-\nu\eta_1|.
$$
The performance of an estimator $\hat{\eta}$ is characterized by one of the three properties defined below. 
{The limits in the following definitions and everywhere in the sequel are considered over sequences $n_1\to \infty$ such that the first imbalance condition in \eqref{imb} is satisfied for every $n_1$. The size of the second set of vertices $n_2$ need not tend to infinity and should only satisfy the second condition in \eqref{imb}. Since we consider the asymptotics as $n_1\to \infty$ the values $\gamma_i$, $p$ and $\delta$ are allowed to depend on $n_1$.}%, $n_2$.
\begin{defi}[weak recovery]\label{weakR}
The estimator $\hat{\eta}$ achieves weak recovery of $\eta_1$
%is possible for the Bipartite Stochastic Block Model $BSBM(\delta, n_1, n_2, p)$ 
if there exists $\alpha \in (0,1)$ such that 
%for every $(V_1, V_2, A)$ drawn according to $BSBM(\delta, n_1, n_2, p)$, there exists an estimator $\hat{\eta} \in \{ \pm 1\}^{n_1}$ of $\eta$ such that for all $\alpha \in (0,1)$:
\begin{align}\label{eq:weakR}
\lim_{n_1\to\infty}\sup_{BSBM}\mathbb{P}\left(\frac{r(\eta_1, \hat{\eta})}{n_{1}}\ge \alpha\right) = 0,
\end{align}
where $\sup_{BSBM}$ denotes the maximum over all distributions of $A$ drawn from $BSBM(\delta, n_{1+}, n_{1-}, n_{2+}, n_{2-}, p)$.
\end{defi}
\noindent {If the communities in $V_1$ are balanced weak recovery with small $\alpha$ can be interpreted as the fact that $\hat{\eta}$ recovers the vertices better than chance. However, if there is a strong imbalance, Defintion \ref{weakR} does not necessarily characterize good estimators as one can achieve weak recovery with small $\alpha$ using a trivial estimator that assigns all vertices to one community. % Under the small imbalance regime, Definition \ref{weakR} remains meaningful. We also note that, 
In this paper, the property stated in Definition \ref{weakR} is not of interest on its own but rather as an auxiliary fact that we need to prove exact recovery. Namely, the initialization of the algorithm proposed below should satisfy the property stated in Defintion \ref{weakR}.
}
\begin{defi}[almost full recovery]
The estimator $\hat{\eta}$ achieves almost full recovery of $\eta_1$ if for all $\alpha \in (0,1)$ we have
\begin{align*}
\lim_{n_1\to\infty}\sup_{BSBM}\mathbb{P}\left(\frac{r(\eta_1, \hat{\eta})}{n_1}\ge \alpha\right) = 0.
\end{align*}
\end{defi}
Almost full recovery means that $\hat{\eta}$ correctly classifies almost every vertex with high probability.

\begin{defi}[exact recovery]
The estimator $\hat{\eta}$ achieves exact recovery of $\eta_1$ if
\begin{align*}
\underset{n_{1}\to \infty}{\lim}\inf_{BSBM} \mathbb{P} \big (r(\eta_1, \hat{\eta}) = 0 \big) = 1.
\end{align*}
\end{defi}
Exact recovery means that $\hat{\eta}$ correctly classifies all the vertices 
with high probability.
%We may extend those definitions to the case where we consider the narrowed definition of the Bipartite SBM.
 %Recall that $n_{1} \leq n_2$ by the definition of the BSBM model. Hence, as the limits in Definitions 1 -- 3 are taken as $n_{1}$ grows to infinity, the same holds for $n_2$. Also, as these definitions are asymptotic they assume that the values $p$ and $\delta$ are allowed to depend on $n_1$, $n_2$.
%All proofs are deferred to the Appendix. \\
%\begin{itemize}
%\item clustering pb : often NP-hard
%\item several of modelling such problems (graph or mixture models)
%\item for graphs : most studied clustering problem $\rightarrow$ partition nodes of a SBM graph
%\item generalisation of SBM : BSBM
%\item two ways of theoretically improving the present results : speed and conditions over density
%\item concerning the technical results : efficient algorithm
%\item study of the BSBM : state of the art
%\end{itemize}
\subsubsection{Notation}

We will use the following notation. For given sequences $a_n$ and $b_n$, we write that $a_n=O(b_n)$ (respectively, $a_n = \Omega(b_n)$) if there is an absolute constant $c$ such that $a_n \leq c b_n$ (respectively, $a_n \geq c b_n$). We write $a_n \asymp b_n$ if  $a_n=O(b_n)$ and $a_n = \Omega(b_n)$. For $a,b \in \mathbb{R}$, we denote by $a \vee b$ (respectively, $a \wedge b$) the maximum (respectively, minimum) of $a$ and $b$. For $x, y \in \mathbb{R}^m$ for any $m \in \mathbb{N}$, we denote by $x^\top y$ the Euclidean scalar product, by $\| x \|_2$ the corresponding norm of $x$ and by sign($x$) the vector of signs of the components of $x$. For any matrix $M \in \mathbb{R}^{m \times m}$,  we denote by $\| M \|_{\infty}$ its spectral norm. Further, $\mathbf{I}_m$ denotes the ${m \times m}$ identity matrix  and $\mathds{1}(\cdot)$ denotes the indicator function. 
We denote by $c$ positive constants that may vary from line to line.

\section{Reduction to a spiked model }

The biadjacency matrix $A$ can be written as
\begin{align*}
A = \mathbb{E}(A) + W
\end{align*}
where $A$ is observed, $\mathbb{E}(A)$ is interpreted as the signal, and $W:=A - \mathbb{E}(A)$ as the noise.  
It is easy to check that
\begin{align}\label{ea}
\mathbb{E}(A) = p \mathds{1}_{n_{1}}\mathds{1}_{n_{2}}^{\top} + (\delta-1)p \eta_{1}\eta_{2}^{\top},
\end{align}
where $\mathds{1}_{n_{1}}$ (respectively, $\mathds{1}_{n_{2}}$) is the vector of ones with dimension $n_{1}$ (respectively, $n_{2}$) and $\eta_{1},\eta_{2}$ are the vectors of labels corresponding to the sets of vertices $V_{1}$ and $V_{2}$, respectively. The second component on the right hand side of \eqref{ea}  contains information about the vector $\eta_{1}$ that we are interested in, while the first component 
$p\mathds{1}_{n_{1}}\mathds{1}_{n_{2}}^{\top}$ is non-informative about the labels. Assuming parameter $p$ to be known we can simply subtract this component from $A$. From an adaptive perspective, one way to eliminate the non-informative component is by getting an estimator $\hat{p}$ of $p$, then considering $A-\hat{p}\mathds{1}_{n_{1}}\mathds{1}_{n_{2}}^{\top}$ as the new data matrix. Another way to disregard this component is to assume, as in \cite{feldman2015subsampled,florescu2016spectral}, that the partitions are balanced, which implies the orthogonality of $\mathds{1}_{n_{i}}$ and $\eta_i$ for $i=1,2$. This assures that $\eta_1$ and $\eta_2$ are the  singular vectors of $\mathbb{E}(A)$ corresponding to the second largest singular value, which makes it possible to recover them  with suitable accuracy from the observation of $A$.

In this paper, we follow the first approach where we estimate $p$ by
\begin{equation}\label{eq:estim_p}
\hat{p} = \frac{1}{n_1n_2} \mathds{1}^\top_{n_1}A\mathds{1}_{n_2}.
\end{equation}
Then we consider the corrected adjacency matrix 
\begin{align}\label{spiked}
\hat{A}:= A - \hat{p} \mathds{1}_{n_1}\mathds{1}^\top_{n_2} = (\delta-1)p \eta_{1}\eta_{2}^{\top} + \underbrace{ W + (p-\hat{p}) \mathds{1}_{n_1}\mathds{1}^\top_{n_2}}_{noise}.
\end{align}
This is a special case of spiked matrix model where the underlying signal and the noise have a particular structure. 
In the rest of this paper, we assume that the observed matrix $\hat{A}$ is of the form \eqref{spiked}. %, akin to the previous work on BSBM (cf. \cite{feldman2015subsampled}, \cite{florescu2016spectral}). 
%Note that the assumption of known $p$ that enables this reduction is very mild since estimating $p$ is much easier than estimating the vector $\eta_{1}$. %Another alternative to remove this part from the mean is through considering the second eigenvector of the adjacency matrix. 

A well-known approach to community detection is the spectral approach, i.e., clustering according to the signs of the entries of eigenvectors or singular vectors of the adjacency matrix or its modified version. 
In our case, $\eta_1$ is the left singular vector associated with the largest singular value of the signal matrix $(\delta-1)p \eta_{1}\eta_{2}^{\top}$. Since $\mathbb{E}(\hat{A})$ is unknown -- only $\hat A$ is observed --  a natural algorithm for recovering $\eta_1$ would, at first sight, consist in computing the left singular vector of $\hat A$ corresponding to the top singular value and then taking the signs of the entries of this vector as estimators of the entries of $\eta_1$. However, such a method provides a good estimator of $\eta_1$ only if the top singular value $(\delta-1)p$ of the signal matrix is much larger than the spectral norm of the noise term in \eqref{spiked} that is dominated by the spectral norm of $W$ under mild assumptions on the imbalance $\gamma_1\gamma_2$. As noticed in \cite{florescu2016spectral}, this approach suffers from a strict deterioration of sufficient conditions of recovery when $n_2$ grows larger than $n_1$. The problem can be avoided by applying the spectral approach to \textit{hollowed matrix} $H(\hat{A}\hat{A}^\top)$ rather than to $\hat{A}$, where $H : \mathbb{R}^{n_{1}\times n_{1}} \rightarrow \mathbb{R}^{n_{1}\times n_{1}} $ is the linear operator defined by the relation
\begin{align*}
H(M) = M - \diag(M),~\forall M \in \mathbb{R}^{n_{1}\times n_{1}}. 
\end{align*}
Here, $\diag(M)$ is a diagonal matrix with the same diagonal as $M$. 
The corresponding spectral estimator of $\eta_1$ is 
\begin{align}
\label{eq:eq2}
\eta_1^0 = \sign(\hat{v}),
\end{align}
where $\hat{v}$ is the eigenvector corresponding to the top eigenvalue of $H(\hat A \hat A^\top)$. We will further refer to $\eta_1^0$ as {\it spectral procedure on hollowed matrix}. The properties of $\eta_1^0$ are studied in Section \ref{sec:sec1}. In particular, we show that $\eta_1^0$ achieves  almost full recovery under milder conditions than previously established in \cite{florescu2016spectral} for a different method called the diagonal deletion SVD. However, it is not known whether $\eta_1^0$ can achieve exact recovery. 

In order to grant exact recovery, we propose a new estimator. Namely, we run the sequence of iterations $(\hat{\eta}^k)_{k \geq 1}$ defined by the recursion

\begin{align}\label{eq:eq4}
\hat{\eta}^{k+1} = \sign \big( H(\hat{A}\hat{A}^\top) \hat{\eta}^k \big),~\quad k= 0,1,\dots,
\end{align}
with the spectral estimator as initializer: $\hat{\eta}^{0}=\eta_1^0$. Our final estimator is  $\hat{\eta}^{m}$ with $m> \frac{\log n_1}{2\log 2}-\frac{3}{2}$. 
We call this procedure the {\it hollowed Lloyd's algorithm}. It is inspired by Lloyd's iterations, whose statistical guarantees were studied in the context of SBM and Gaussian Mixture Models by \cite{lu2016statistical}. More recently, this approach was used in \cite{ndaoud2018sharp} to derive sharp optimal conditions for exact recovery in the Gaussian Mixture Model. It follows from those papers that the issue of proper initialization of Lloyd's algorithm is essential. The question of proving optimality of recovery by Lloyd's algorithm under random initialization is still open, both in the Gaussian Mixture Model and in the BSBM model.

\section{Related work}
While the literature about the classical SBM abounds (we refer to the  paper \cite{abbe2015exact} and references therein), fewer results are known about the Bipartite SBM.    Papers \cite{amini1,amini2,neuman} consider  more general BSBM settings than ours. Being specified to our setting, their results { guarantee consistency for clustering  under conditions not  covering the high-dimensional regime $n_2\gg n_1$}.
In particular, paper \cite{amini1} shows that consistency can be achieved by spectral clustering on an appropriately regularized adjacency matrix when  $n_2\asymp n_1$. %The main theorem in \cite{amini2} requires $p$ to be greater than $(n_1n_2)^{-1/2}$ and $n_1p \to \infty$, which again is not possible ({\color{red} \bf faux, car les deux conditions sont compatibles! Autre version: 
As an example of limitations used in \cite{amini2}, we refer to the main theorem in \cite{amini2} (Theorem 1) that requires $p^2 = O(n_1/n_2^2)$ in our setting (cf. assumption (A3) in \cite{amini2}). This assumption combined with the necessary condition for weak recovery  $p^2 = \Omega((n_1n_2)^{-1})$
 only allows for values of $n_1,n_2$ such that $n_2 = O(n_1^2)$. 
 In \cite{neuman}, the focus is on handling mutiple and possibly overlapping clusters. The recovery conditions from \cite{neuman} being specified to our setting (two non-overlapping clusters) are far from optimal. %Regarding the considered thresholds for $p$, the results in \cite{amini1,amini2,neuman} are not comparable to our work  since we cover the regime $n_2 \gg n_1$.
 
On the other hand, papers \cite{feldman2015subsampled,florescu2016spectral,cai2019subspace} {study the problem on finding proper thresholds for $p$ under conditions covering the high-dimensional regime $n_2 \gg n_1$. } In particular, \cite{florescu2016spectral} proves that the sharp phase transition for the weak recovery problem occurs around the critical probability $p_{c} = \frac{(\delta-1)^{-2}}{\sqrt{n_{1}n_{2}}}$. The sufficient condition in this case is based on a reduction to SBM then using any optimal ``black-box'' algorithm for detection in the SBM as in \cite{bordenave,massoulie,mossel}. 

For the problem of exact recovery, \cite{feldman2015subsampled} obtained what we will further call the state of the art sufficient conditions. Namely, using the Subsampled Power Iteration algorithm, \cite{feldman2015subsampled} shows that the condition $p = \Omega\left(\frac{(\delta-1)^{-2}\log{n_1}}{\sqrt{n_1n_2}}\right)$ is sufficient to achieve exact recovery. {Although no necessary condition for this property is known, it is conjectured in \cite{feldman2015subsampled} that $p = \Omega\left(\frac{\log{n_1}}{\sqrt{n_1n_2}}\right)$ is
%at least $\Omega\left( \sqrt{n_1n_2}\log{n_1}\right)$ edges are 
necessary for exact recovery. Our results below disprove this conjecture.}    

%By analogy to spiked models, one may expect spectral algorithms to achieve optimal recovery in the BSBM.  
Spectral algorithms for BSBM were investigated in \cite{florescu2016spectral,cai2019subspace}. In particular, \cite{florescu2016spectral} compared sufficient conditions of almost full recovery for the classical SVD algorithm and for the diagonal deletion SVD. It was shown in \cite{florescu2016spectral} that, in the 
high-dimensional regime $n_2 \gg n_1$, the diagonal deletion SVD provides a strict improvement over the classical SVD. One way to explain this improvement is by observing that, in this regime, the spectral norm of the expectation of the noise term $WW^{\top}$ is much larger than its deviation. It was proved in \cite{florescu2016spectral} that $p=\Omega\left( \frac{\log{n_1}}{\sqrt{n_1n_2}}\right)$ is sufficient to achieve  almost full recovery through the diagonal deletion SVD algorithm. %\simo{The latter result was further improved in \cite{cai2019subspace}, where under similar conditions it was shown that diagonal SVD actually achieves exact recovery.} 
Note that \cite{feldman2015subsampled} proved that, under similar conditions, the  Subsampled Power Iteration algorithm achieves a better result, i.e., it provides exact recovery rather than almost full recovery. {The most recent paper  \cite{cai2019subspace} parallel to our work shows 
that the diagonal SVD also upgrades from almost full to exact recovery under the conditions that are analogous to \cite{florescu2016spectral} for moderate $n_2\ge n_1$ but deteriorate for very large $n_2$ (for example, if $n_2\asymp e^{n_1}$).} 
The results of \cite{feldman2015subsampled,florescu2016spectral,cai2019subspace} 
are summarized in Table 1.%\ref{table1}.  
\begin{center}
\scalebox{1}{\begin{tabular}{|c|c|c|c|}%\label{table1}
    \hline 
     & & & \\
   Ref. & Results & Conditions & Algorithm \\
     & & & \\
   \hline 
        & & & \\
   \cite{feldman2015subsampled} &
   \makecell{Exact \\ recovery}
   & $\left\{
    \begin{array}{ll}
        { \text{known}\ p}, \, n_2 \geq n_1,  \\
        p \geq C (\delta-1)^{-2} \frac{\log n_1}{\sqrt{n_1 n_2}}
    \end{array}
\right.$
& \makecell{Subsampled \\ iterations} \\
     & & & \\
   \hline 
        & & & \\
   \cite{cai2019subspace} &
   \makecell{Exact \\ recovery}
   & $\left\{
    \begin{array}{ll}
         { \text{known}\ p ,} \\
        p \geq C (\delta-1)^{-2} \left(\frac{\log (n_1+n_2)}{\sqrt{n_1 n_2}}\vee \frac{\log{(n_1+n_2)}}{n_2} \right)
    \end{array}
\right.$
& \makecell{Diagonal \\ deletion SVD} \\
     & & & \\
\hline 
     & & & \\
\cite{florescu2016spectral}
& \makecell{Almost full \\ recovery}
& $\left\{
    \begin{array}{ll}
        { \text{unknown}\ p},\, n_2 \geq n_1 (\log n_1)^4 , \,\gamma_1=\gamma_2=0,\\
        p \geq C_{\delta} \frac{\log n_1}{\sqrt{n_1 n_2}}
    \end{array}
\right.$
& \makecell{Diagonal \\ deletion SVD} \\
     & & & \\
\hline 
     & & & \\
\cite{florescu2016spectral}
& \makecell{Weak \\ recovery}
& $\left\{
    \begin{array}{ll}
        { \text{unknown}\ p},\, n_2 \geq n_1,\, \gamma_1=\gamma_2=0,\\
        p > \frac{(\delta-1)^{-2}}{\sqrt{n_1n_2}}
    \end{array}
\right.$
& \makecell{SBM \\ reduction} \\
     & & & \\
     \hline 
\end{tabular}}
\captionof{table}{Summary of the results of  \cite{feldman2015subsampled,florescu2016spectral,cai2019subspace}. Here, $C_{\delta}>0$ is a constant depending on $\delta$. {In this table, condition $\gamma_1=\gamma_2=0$ means that the labels are independent Rademacher random variables.} }
\end{center}
{We emphasize that \cite{feldman2015subsampled,florescu2016spectral} focus on the regime $n_2 \ge n_1$. It will be also the main challenge in the present paper even though our results are valid for all $n_1,n_2$ with no restriction. There are two reasons for that:
\begin{itemize}
    \item The high-dimensional regime $n_2 \gg n_1$ is of dominant importance in the applications of bipartite SBM.
    \item The high-dimensional regime is challenging from the theoretical point of view. The case $n_2 \leq n_1$ is more direct to handle as it can be solved similarly to standard SBM. Indeed,  optimal results for $n_2 \leq n_1$ are achieved by SVD type algorithms applied to the adjacency matrix $A$ (cf. \cite{amini1,amini2}). They are based on a control of the spectral norm of $W$. While the behavior of the spectral norm of $W$ is well understood (cf. \cite{vanhandel}), existing results for the spectral norms of $WW^{\top} - \mathbb{E}(WW^{\top})$  or of $H(WW^{\top})$ that one needs to control when $n_2\gg n_1$ turn out to be suboptimal. It makes the regime  $n_2\gg n_1$ quite challenging.
\end{itemize}

}

  Under the condition $n_2 \geq n_1\log{n_1}$, the state of the art results can be summarized by the following diagram leaving open the optimal value $p=\bf{p}^*$ at which exact recovery can be achieved. 

\begin{tikzpicture}

% horizontal axis
\draw[->] (0,0) -- (10,0) node[anchor=north] {$p$};
% labels
\draw	(0,0) node[anchor=north] {0}
		(3,0) node[anchor=north] {$\frac{(\delta-1)^{-2}}{\sqrt{n_1n_2}}$}
		(5,0) node[anchor=north] {$\bf{p}^{*}$}
		(7,0) node[anchor=north] {$\frac{(\delta-1)^{-2}\log{n_1}}{\sqrt{n_1n_2}}$};
% ranges
\draw	(1.5,1) node{\bf{\color{red}weak recovery }};
\draw	(1.5,0.5) node{\bf{\color{red} is impossible}};

\draw	(8.5,1) node{\bf{\color{blue}exact recovery  }};
\draw	(8.5,0.5) node{\bf{\color{blue} is possible }};

% vertical axis
%\draw[->] (0,0) -- (0,4) node[anchor=east] {$\Phi_{o}(a)$};
% nominal speed
\draw[dotted] (3,0) -- (3,2);
\draw[dotted] (7,0) -- (7,2);
\draw[thick] (5,0) -- (5,2);

% Us
%\draw[thick] (0,3) -- (4,3);
%\draw[thick] (6,1) -- (10,1);%label

% Psis
%\draw[thick,dashed]  (4,3) parabola[bend at end] (6,1); %label

\end{tikzpicture}

A related recent line of work developed optimal clustering algorithms for Gaussian Mixture Models (GMM) \cite{lu2016statistical,giraud2019partial,ndaoud2018sharp,lof}. It was shown in \cite{lu2016statistical} that clustering with optimality properties in GMM can be achieved by an iterative algorithm analogous to Lloyd's procedure. Moreover, \cite{ndaoud2018sharp} proved that a version of such iterative clustering algorithm attains the sharp phase transition for exact recovery in those models. Based on an analogy between the GMM and the BSBM, it is conjectured in \cite{ndaoud2018sharp} that similar algorithms can achieve almost full recovery and exact recovery in bipartite graph models. %,  leaving open questions about the latter concerning the characterisation of a sharp phase transition and the possibility of using . 
Namely, comparing the first two moments of the matrices arising in the two models one may expect $p=\Omega\left((\delta-1)^{-2}\sqrt{\frac{\log{n_1}}{n_1n_2}}\right)$ to be sufficient to achieve exact recovery in the BSBM, provided that $n_2 \geq n_1\log{n_1}$. {This heuristics suggests a logarithmic improvement over the state of the art condition presented in the diagram above. More interestingly, %suggests that, if $n_2 \geq n_1\log{n_1}$, we only need $\Omega\left( \sqrt{n_1n_2\log{n_1}}\right)$ edges to exactly recover the partition of $V_1$, which 
it goes against another, seemingly more natural, heuristics  based on an analogy with standard SBM and conjecturing the right recovery condition in the form $p=\Omega\left(\frac{\log{n_1}}{\sqrt{n_1n_2}}\right)$ 
(cf. \cite{feldman2015subsampled}). We show below that, surprisingly, the analogy with  GMM and not with SBM (however, the ``closest parent" of BSBM) appears to be correct. }

Finally, some consequences were obtained for planted satisfiability problems.
Reduction of those problems  to BSBM allows one to get sufficient conditions of complete recovery of the planted assignment. We refer to \cite{feldman2015subsampled} for the details of this reduction. Namely, it is shown in \cite{feldman2015subsampled} that considering a planted satisfiability problem is equivalent to considering a BSBM where $n_1 = n$ and $n_2 = n^{r-1}$, with $n$ and $r \geq 2$ defined below. For any satisfiability
problem, we are interested in $m$, which is the sufficient number of $k$-clauses from $C_{k}$ in order to recover completely
the planted assignment $\sigma$. Here, $C_k$ is the set of all ordered $k$-tuples of $n$ literals $x_1,\dots,x_n$ and their negations with
no repetition of variables. For a $k$-tuple of literals $C$ and an assignment $\sigma \in \{-1,+1\}^n$, $\sigma(C)$ denotes the vector of values that $\sigma$ assigns to the literals in $C$.  Given a planting distribution $Q:\{-1,+1\}^k \to [0,1]$, and an assignment $\sigma$, we define the random constraint satisfaction problem $F_{Q,\sigma}(n,m)$ by drawing $m$ $k$-clauses from $C_k$ independently according to the distribution
$$
Q_{\sigma}(C) = \frac{Q(\sigma(C))}{\sum_{C'\in C_k}Q(\sigma(C'))}.
$$
A related class of problems is one in which for some fixed predicate $P:\{-1,1\}^{k}\to\{-1,1\}$, an instance is generated by choosing a planted assignment $\sigma$ uniformly at random and generating a set of $m$ random and uniform $P$-constraints. That is, each constraint is of the form $P(x_{i_1},\dots,x_{i_k})$ = P($\sigma_{i_1},\dots,\sigma_{i_k}$), where $(x_{i_1},\dots,x_{i_k})$ is a randomly and uniformly chosen $k$-tuple of variables (without repetitions).

In simpler words $m$ plays the role of $pn_1n_2$ in the BSBM, and any sufficient condition on $p$ leads to a sufficient condition for $m$. It was shown in \cite{feldman2015subsampled}  that the following conditions
are sufficient to achieve exact recovery in some of the satisfiability problems.
\begin{itemize}
    \item For any planting distribution $Q:\{-1,1\}^{k}\to[0,1]$, there exists an algorithm that for any assignment $\sigma \in \{-1,1\}^n$, given an instance of $F_{Q,\sigma}(n,m)$, completely recovers the planted assignment $\sigma$ for $m=O(n^{r/2}\log{n})$. Here, $r\geq 2$ is the smallest integer such that there is some $S\subseteq \{1,\dots,k\}$ with $|S|=r$, for which the discrete Fourier coefficient $\hat{Q}(S)$ is non-zero.
    \item For any predicate $P:\{-1,1\}^{k}\to\{-1,1\}$, there exists an algorithm that for any assignment $\sigma$, given $m$ random $P$-constraints, completely recovers the planted assignment $\sigma$ for $m=O(n^{r/2}\log{n})$ where $r\geq 2$ is the degree of the lowest-degree non-zero Fourier coefficient of $P$.
\end{itemize}

\section{Main contributions}
Our findings can be summarized as follows.
\begin{itemize}
\item {\it Exact recovery.} We present a novel method - the {\it hollowed Lloyd's algorithm} - that achieves exact recovery under strictly milder conditions than the state of the art. {Namely, we show that 
\begin{equation}\label{pthr}
p = \Omega\left(\sqrt{\frac{\log{n_1}}{n_1n_2}} \vee \frac{\log{n_1}}{n_2}\right)
\end{equation}
is sufficient to achieve exact recovery in the BSBM. Condition \eqref{pthr} exhibits  an elbow at $n_{2} \asymp n_1\log{n_1}$ between the low-dimensional and high-dimensional regimes. In the low-dimensional regime $n_{2} \leq n_1\log{n_1}$, it takes the form $p = \Omega\left( \frac{\log{n_1}}{n_2}\right)$, the same as the sufficient condition in \cite{cai2019subspace}, that can be shown minimax optimal using similar lower bound techniques as in \cite{bandeira201518} for SBM (clustering oracle with side information). Such a lower bound was  formalized, for the Bipartite SBM, in Theorem 2 of \cite{amini2} under the conditions $n_1 \leq n_2$ and $n_2 = O( n_1\log{n_1})$ that correspond to assumptions $(A1)-(A4)$ from \cite{amini2}. On the other hand, in the high-dimensional regime $n_{2} \geq n_1\log{n_1}$ the sufficient condition of exact recovery \eqref{pthr} reads as $p = \Omega\left(\sqrt{\frac{\log{n_1}}{n_1n_2}}\right)$.  
We argue that this condition is tight %  based on the GMM reduction mentioned above. 
by showing that even a supervised oracle procedure fails to achieve exact recovery in the regime $n_2 \geq n_1 \log{n_1}$ if $p < c\sqrt{\frac{\log{n_1}}{n_1n_2}}$ for a constant $c>0$ small enough. Importantly, 
 our findings imply that the  condition $p = \Omega\left(\frac{\log{n_1}}{\sqrt{n_1n_2}}\right)$ common for all the related work and  based on the analogy with usual SBM is not necessary for exact recovery in the BSBM when $n_2 \geq n_1 \log{n_1}$.}
\item {\it Almost full recovery.} We provide a new sufficient condition for  almost full recovery by spectral techniques using the diagonal deletion device.  Our spectral estimator and its analysis are different from \cite{florescu2016spectral}, where another diagonal deletion method was suggested.
The analysis uses an adapted version of matrix Bernstein inequality applied to a sum of hollowed rank one random matrices where bounding the corresponding moments, in operator norm, involve combinatorics arguments. This leads to an improvement upon the sufficient condition of \cite{florescu2016spectral}.  We show that, unlike in the Gaussian case, hollowing the Gram matrix yields, both theoretically and empirically, a strict improvement over debiasing, i.e., subtracting the expecation of the Gram matrix. 
\item   The hollowed Lloyd's algorithm that we propose is computationally faster than the previously known methods. Its analysis that we develop is novel and makes it possible to transform any estimator achieving weak recovery into another one achieving exact recovery. We expect this analysis to be useful to solve more general exact recovery problems for random graphs. 
\item In contrast to the related work, where simplifying assumptions of either zero imbalance ($\gamma_1=\gamma_2=0$)  as in \cite{florescu2016spectral} or known $p$ as in \cite{feldman2015subsampled,cai2019subspace} were imposed, %\simo{dans \cite{feldman2015subsampled} la valeur de $p$ est utilisee dans leur algorithme. Ce n'est pas clair s'ils supposent vraiment $\gamma_1=\gamma_2=0$ par contre. Justement quand $p$ est connu on a pas besoin de condition d'imbalance.} 
our approach is more general. In particular, our exact recovery result holds adaptively to $p$ under a mild assumption on $\gamma_1\gamma_2$. Notice that as $\gamma_1\gamma_2$ get closer to $1$, then estimation of $p$ gets harder. Our theoretical findings are supported by numerical experiments, where we show that our iterative procedure (with or without spectral initialization) outperforms spectral methods and achieves almost the same performance as the supervised oracle.
\item Our results regarding 
%\begin{enumerate}[label=(\alph*)]
    %\item almost full recovery based on the spectral estimator,
    %\item 
 almost full recovery based on the spectral estimator,  exact recovery via the hollowed Lloyd's algorithm, and the
    impossibility of exact recovery via the supervised oracle
%\end{enumerate}
are summarized in the table below. \\
%
%\sout{n_2 \geq n_1\log{n_1}},
{
\begin{center}
\scalebox{1}
{\hspace{-1.3cm}
\begin{tabular}{|c|c|c|}
    \hline 
      & & \\
    Results & Conditions & Procedure \\
      & & \\
   \hline 
         & & \\
   \makecell{ Almost full recovery\\  {\it by spectral methods}}
  & $\left\{
   \begin{array}{ll}
         \text{unknown} \ p, \ \ \gamma_1\gamma_2 \leq 1/C'_{n_1} \\
        p \geq C_{n_1} (\delta-1)^{-2} \left(\sqrt{\frac{\log n_1}{n_1 n_2}}\vee \frac{\log{n_1}}{n_2}\right)
    \end{array}
\right.$
%& \makecell{Spectral } \\
& \makecell{Spectral  \\ on hollowed matrix} \\
     & & \\
\hline 
  %    & & \\
\makecell{ Exact recovery\\  }
& $\left\{
    \begin{array}{ll}
    \text{unknown} \ p, \ \
         \gamma_1\gamma_2 < 1/480, \\
        p \geq C (\delta-1)^{-2} \left(\sqrt{\frac{\log n_1}{n_1 n_2}}\vee \frac{\log{n_1}}{n_2}\right)
    \end{array}
\right.$
& \makecell{Hollowed Lloyd's} \\
      & & \\
\hline 
      & & \\
\makecell{Impossibility of \\ exact recovery}
& $\left\{
    \begin{array}{ll}
        n_2 \geq n_1\log{n_1},\ \gamma_1=\gamma_2 = 0,\\
        p < C_{\delta} \sqrt{\frac{\log{n_1}}{n_1n_2}}
    \end{array}
\right.$
& \makecell{Oracle} \\
      & & \\
     \hline 
\end{tabular}
}
%\begin{center}
%\hspace*{-0.5cm}
%\scalebox{1}{
%\begin{tabular}{|c|c|c|}
 %   \hline 
%      & & \\
%      Result & Conditions 1 & Conditions 2\\
%      & & \\
 %  \hline 
 %    & & \\
 %   (a)
 % &   $\left\{
  %  \begin{array}{ll}
  %      n_2 \geq n_1^{1 + \varepsilon} \\ %\log n_1
  %      {p \geq C_{n_1}(\delta-1)^{-2}(1+\frac{1}{\varepsilon}) %\sqrt{\frac{\log n_1}{n_1 n_2}} } \\
 %   \end{array}
%\right.$  
%&  $\left\{
 %   \begin{array}{ll}
  %      n_2 \geq n_1 \log n_{1} \\
  %      p \geq {C_{n_1}} (\delta-1)^{-2}\frac{\log n_1}{\sqrt{n_1 n_2 %\log \log n_1}}
 %   \end{array}
%\right.$ \\
% & &  \\
 %    \hline 
 %    & & \\
 %    (b)
%  &  $\left\{
%    \begin{array}{ll}
  %      {n_2 \geq  n_1^{1+\varepsilon}} %\log n_1}
 %       \\
  %      {p \geq C(\delta-1)^{-2}(1+\frac{1}{\varepsilon}) %\sqrt{\frac{\log n_1}{n_1 n_2}} }
%    \end{array}
%\right.$
%&  $\left\{
%    \begin{array}{ll}
%        n_2 \geq n_1 \log n_{1} \\
%        p \geq C (\delta-1)^{-2}\frac{\log n_1}{\sqrt{n_1 n_2 \log \log n_1}}
%    \end{array}
%\right.$ \\
%      & & \\
%      \hline 
%\end{tabular}
%}
\captionof{table}{Summary of our main contributions. Here, $C_\delta>0$ is a positive constant depending on $\delta$, $C>0$ is an absolute  constant
and $C_{n_1}$, $C'_{n_1}$ are any sequences such that $C_{n_1}, C'_{n_1} \to\infty$ as $n_1\to\infty$.
}
\end{center}
}

\item As a byproduct, we also improve upon sufficient conditions of \cite{feldman2015subsampled} for exact recovery in some of the satisfiability problems. Namely, our results imply the following.
\begin{enumerate}
    \item For any planting distribution $Q:\{-1,1\}^{k}\to[0,1]$, there exists an algorithm that for any assignment $\sigma$, given an instance of $F_{Q,\sigma}(n,m)$, completely recovers the planted assignment $\sigma$ for $m=O(n^{r/2}\sqrt{\log{n}})$ where $r\geq 3$ is the smallest integer such that there is some $S\subseteq \{1,\dots,k\}$ with $|S|=r$, for which the discrete Fourier coefficient $\hat{Q}(S)$ is non-zero.
    \item For any predicate $P:\{-1,1\}^{k}\to\{-1,1\}$, there exists an algorithm that for any assignment $\sigma$, given $m$ random $P$-constraints, completely recovers the planted assignment $\sigma$ for $m=O(n^{r/2}\sqrt{\log{n}})$ where $r\geq 3$ is the degree of the lowest-degree non-zero Fourier coefficient of $P$.
\end{enumerate}
\end{itemize}

\section{Properties of the spectral method}
\label{sec:sec1}

In this section, we analyze the risk of the spectral initializer $\eta_1^0$. As in the case of SDP relaxations of the problem, the matrix of interest is the Gram matrix $\hat A \hat A^\top$. It is well known that it suffers from a bias that grows with $n_2$. In \cite{royer2017adaptive}, a debiasing procedure is proposed using an estimator of the covariance of the noise. In this section, we consider a different approach that consists in removing the diagonal entries of the Gram matrix. %It is worth reminding here that this approach was also used in \cite{florescu2016spectral}. 

%%Let us show that an oracle estimator $\eta^{**}$ of $\eta$ can be defined as:\\

%%\begin{align}
%%\label{eq:eq3}
%%\eta^{**} = \sign \left( H(AA^\top) \eta \right)
%%\end{align}

%We observe that $A v_2 = (\delta -1) p \sqrt{n_1 n_2} v_1 + W v_2$. If $v_2$ was known, and assuming that it is possible to control the spectral norm of $W v_2$, we could estimate the entries of $\eta$ by taking the sign of $A v_2$. However, in practice, $v_2$ is unknown, and even more difficult to estimate than $v_1$. Now, we also observe that $v_1 ^\top A = (\delta -1)p \sqrt{n_1 n_2} v_2^\top + v_1 ^\top W$. Thus, applying the same reasoning, if $v_1$ was known, and assuming that it is possible to control the spectral norm of $W^\top v_1$, we could estimate $v_2$ (up to a $\sqrt{n_2}$ factor) by taking the sign of $A^\top v_1$. This provides the following oracle estimator $\eta^{**}$ given by :

%\begin{align*}
%\forall i=1,\dots, n_1, \hspace*{0.5cm} \eta_i^{**} = \sign \left(  A_i  \left( \sum_{k \neq i}  \eta_k A_k^\top\right) \right) \text{ where } A = \begin{bmatrix} A_1 \\ \vdots \\ A_{n_1} \end{bmatrix}
%\end{align*}

%In practice, $\eta^{**}$ is only an oracle estimator, since obtaining $\eta_i^{**}$ %requires a prior knowledge of every $\eta_k$ for $k \neq i$.\\

We give some intuition about this procedure when $p$ is known. In this case the adjacency matrix can be replaced by
\[
\tilde A = A - p\mathds{1}_{n_1}\mathds{1}_{n_2}^\top.
\]
The general case follows similarly since one can show that $|\hat{p}-p|$ does not exceed the noise level arising when $p$ is known (see the details below).   The  spectral norm of the expected noise matrix $\mathbb{E}(WW^\top)$ is of the 
order of $n_2 p$.
If $n_{2} \gg n_{1}$, which is the most interesting case in the applications, this is too large compared to the deviation, in the spectral norm, of the noise matrix from its expectation, cf. \cite{florescu2016spectral}.  Since the expectation of the noise $WW^{\top}$ is a diagonal matrix, removing diagonal terms is expected to reduce the spectral norm of the noise and hence to make the recovery problem easier. Specifically, observe that the matrix $H(\tilde A \tilde A^\top)$ can be decomposed as follows: 
\begin{align}
\label{eq:eq1}
  H(\tilde A\tilde  A^\top) & = \underbrace{(\delta -1)^2 p^2 n_2 H(\eta_{1} \eta_{1}^\top) }_{signal} \\
  &\quad + \underbrace{H(W W^\top) + p(\delta-1)H( W\eta_{2}\eta_{1}^{\top} +  \eta_{1}\eta_{2}^{\top}W^\top) 
}_{noise}. \nonumber
%(\delta -1)^2 p^2 n_1 n_2 v_1 v_1^\top + H(W W^\top) + H( W \mathbb{E}(A)^\top + \mathbb{E}(A) W^\top) + p^2 n_1 n_2 H(J_{n_1}) - (\delta -1)^2 p^2 n_1 n_2 I_{n_1} \\
\end{align}
It turns out that the main driver of the noise is $H(WW^\top)$. On the other hand, it is easy to see (cf., e.g., Lemma 17 in \cite{ndaoud2018sharp}) that 
\begin{align}\label{eq:hollow_debias}
\| H(WW^\top) \|_{\infty} \leq 2 \| WW^\top - \mathbb{E}(WW^\top) \|_{\infty}
\end{align}
for any random matrix $W$ with independent columns. This shows that removing the diagonal terms is a good candidate to remove the bias induced by the noise. Thus, diagonal deletion can be viewed as an alternative to debiasing of the Gram matrix. Nevertheless, the operator $H(\cdot)$ may affect dramatically the signal.  Fortunately, it does not happen in our case; the signal term is almost insensitive to this operation since it is a rank one matrix. In particular, we have:
\begin{align*}
\| H(\eta_1 \eta_1^\top) \|_{\infty} = \left(1 - \frac{1}{n_1}\right) \| \eta_1 \eta_1^\top \|_{\infty}.
\end{align*}
Thus, as $n_1$ grows, the signal does not get affected by removing its diagonal terms while we get rid of the bias in the noise term. This motivates the spectral estimator $\eta_1^0$ defined by \eqref{eq:eq2},
where $\hat{v}$ is the eigenvector corresponding to the top eigenvalue of $H(\hat A \hat A^\top)$. %Note that $\eta_1^0$ is different from the spectral estimator of \cite{florescu2016spectral}, which is based on the second eigenvalue of $H(AA^\top)$. This allows us, in particular, to get a correct dependence of the spectral theshold on $\delta$. 
The next theorem gives  sufficient conditions for the  estimator $\eta_1^0$ to achieve
weak and almost full recovery. 
\begin{thm}\label{th:th1}
Let $\eta_1^0$ be the estimator given by \eqref{eq:eq2} with $\hat{p}$ defined in \eqref{eq:estim_p} and let $\alpha \in (0,1)$. Let $(C_{n_1}), (C'_{n_1})$ be sequences of positive numbers that  tend to infinity as $n_1\to \infty$. 

\begin{enumerate}[label=(\roman{enumi})]
    \item 
Let the following conditions hold:

$\left\{
    \begin{array}{ll}
        %\sout{n_2 > n_1\log{n_1},} \\
        \gamma_1\gamma_2 \leq \sqrt{\alpha}/96, \\
        p \geq C(\delta-1)^{-2} \left(\sqrt{\frac{\log n_1}{n_1 n_2}} \vee \frac{\log{n_1}}{n_2} \right),
    \end{array}
\right.$ \\
where $C>C_0/\sqrt{\alpha}$ for an absolute constant $C_0>0$ large enough.
Then the estimator $\eta_1^0$ {satisfies \eqref{eq:weakR}}.

\item Let the following conditions hold:

$\left\{
    \begin{array}{ll}
        %\sout{n_2 > n_1\log{n_1}},\\
        {\gamma_1\gamma_2 \leq 1/C'_{n_1}, }
        \\
       p \geq C_{n_1}(\delta-1)^{-2} \left(\sqrt{\frac{\log n_1}{n_1 n_2}} \vee \frac{\log{n_1}}{n_2} \right).
   \end{array}
\right.$ \\
Then the estimator $\eta_1^0$ achieves almost full recovery of $\eta_1$.
\end{enumerate}
\end{thm}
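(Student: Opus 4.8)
\emph{Proof plan.} I would first reduce to the ``known $p$'' picture and set up a signal-plus-perturbation decomposition of $H(\hat A\hat A^\top)$. Write $\hat A = \tilde A + (p-\hat p)\mathds{1}_{n_1}\mathds{1}_{n_2}^\top$ with $\tilde A := A - p\,\mathds{1}_{n_1}\mathds{1}_{n_2}^\top = (\delta-1)p\,\eta_1\eta_2^\top + W$. Expanding $H(\hat A\hat A^\top)$ and using \eqref{eq:eq1}, one gets $H(\hat A\hat A^\top) = S + E$, where $S = (\delta-1)^2p^2 n_2\, H(\eta_1\eta_1^\top)$ is the signal and $E$ collects three pieces: the main noise $H(WW^\top)$; the cross terms $p(\delta-1)\,H\!\big(W\eta_2\eta_1^\top + \eta_1\eta_2^\top W^\top\big)$; and the $\hat p$-correction $(p-\hat p)\,H\!\big((\tilde A\mathds{1}_{n_2})\mathds{1}_{n_1}^\top + \mathds{1}_{n_1}(\tilde A\mathds{1}_{n_2})^\top\big) + (p-\hat p)^2 n_2\,H(\mathds{1}_{n_1}\mathds{1}_{n_1}^\top)$. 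An initial lemma, proved by Bernstein's inequality applied to $\sum_{i,j}W_{ij}$ together with the deterministic bounds in \eqref{imb}, is that with high probability $|\hat p - p| \le C\big((\delta-1)p\,\gamma_1\gamma_2 + \sqrt{p/(n_1 n_2)}\big)$, i.e.\ estimating $p$ never costs more than the noise level already present when $p$ is known.

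Next I would convert a spectral-norm bound on $E$ into a bound on $r(\eta_1,\eta_1^0)$. The matrix $S = (\delta-1)^2 p^2 n_2(\eta_1\eta_1^\top - \mathbf{I}_{n_1})$ has leading eigenvector $\eta_1/\sqrt{n_1}$, leading eigenvalue $(\delta-1)^2 p^2 n_2(n_1-1)$, and spectral gap $(\delta-1)^2 p^2 n_1 n_2$. Hence, on the event $\|E\|_\infty \le \frac14(\delta-1)^2 p^2 n_1 n_2$, Weyl's inequality shows the top eigenvalue of $H(\hat A\hat A^\top)$ stays isolated, and the Davis--Kahan $\sin\Theta$ theorem gives $\min_{\nu\in\{\pm1\}}\|\hat v - \nu\,\eta_1/\sqrt{n_1}\|_2 \le c\,\|E\|_\infty/\big((\delta-1)^2 p^2 n_1 n_2\big)$. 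Since every vertex misclassified by $\eta_1^0=\sign(\hat v)$ contributes at least $1/n_1$ to the squared $\ell_2$-distance between the suitably signed $\hat v$ and $\eta_1/\sqrt{n_1}$, this yields $r(\eta_1,\eta_1^0)/n_1 \le c'\|E\|_\infty^2/\big((\delta-1)^4 p^4 n_1^2 n_2^2\big)$. So it suffices to show that, with probability tending to one, $\|E\|_\infty \le c_0\sqrt{\alpha}\,(\delta-1)^2 p^2 n_1 n_2$ for a small enough absolute constant $c_0$; this immediately gives \eqref{eq:weakR}. The cross terms and the $\hat p$-correction are then dispatched by elementary estimates: using $\|W\eta_2\|_2 \le C\sqrt{n_1 n_2 p}$ w.h.p.\ (concentration of $\|W\eta_2\|_2^2$, whose mean is of order $n_1 n_2 p$), the bound $\|\tilde A\mathds{1}_{n_2}\|_2 \le C\big((\delta-1)p\gamma_2 n_2\sqrt{n_1} + \sqrt{n_1 n_2 p}\big)$, and the control of $|\hat p - p|$ from the first step, each of these contributions to $\|E\|_\infty$ is at most $c_0\sqrt{\alpha}(\delta-1)^2 p^2 n_1 n_2$ once $\gamma_1\gamma_2 \le \sqrt{\alpha}/96$ (so $\gamma_1^2\gamma_2^2 \le \alpha/96^2$ and $\gamma_1\gamma_2^2 \le \sqrt\alpha/96$) and once the hypothesis on $p$ is used, which forces $p\,n_2 \gtrsim \log n_1$ and $p\,n_1 n_2 \gtrsim \log n_1$.

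The heart of the proof, and the step I expect to be the main obstacle, is the bound $\|H(WW^\top)\|_\infty \le C\big(p\sqrt{n_1 n_2 \log n_1} + n_1 p\log n_1\big)$ with high probability. I would obtain it from a moment version of the matrix Bernstein inequality applied to $H(WW^\top) = \sum_{j=1}^{n_2} H(W_{\cdot j}W_{\cdot j}^\top)$, a sum of independent, mean-zero (because $\mathbb{E}[W_{\cdot j}W_{\cdot j}^\top]$ is diagonal) hollowed rank-one matrices. The matrix variance $\big\|\sum_{j}\mathbb{E}\big[H(W_{\cdot j}W_{\cdot j}^\top)^2\big]\big\|_\infty$ is of order $n_1 n_2 p^2$ (the expectation is diagonal since $W$ is centered, a short combinatorial check), while $\big(\mathbb{E}\|H(W_{\cdot j}W_{\cdot j}^\top)\|_\infty^q\big)^{1/q} \le \big(\mathbb{E}\|W_{\cdot j}\|_2^{2q}\big)^{1/q} \le C(n_1 p + q)$; taking $q\asymp \log n_1$ produces the stated bound. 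The delicate point is that the combinatorial bookkeeping of these operator-norm moments must keep the $n_2$-dependence at $\sqrt{n_2}$ and the logarithmic factor at $\log n_1$, \emph{not} $\log n_2$ --- this is precisely what deleting the diagonal (whose contribution would be the $\asymp n_2 p$ bias, cf.\ \eqref{eq:hollow_debias}) achieves, and it is what distinguishes this argument from a crude matrix Bernstein based on an almost-sure bound on the summands. Comparing the two terms with the target $c_0\sqrt{\alpha}(\delta-1)^2 p^2 n_1 n_2$ yields exactly $p \gtrsim (\delta-1)^{-2}\alpha^{-1/2}\sqrt{\log n_1/(n_1 n_2)}$ and $p \gtrsim (\delta-1)^{-2}\alpha^{-1/2}\log n_1/n_2$, both ensured by the hypothesis once $C>C_0/\sqrt{\alpha}$ with $C_0$ large enough. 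This proves part (i).

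Part (ii) is then immediate. Given sequences $C_{n_1}, C'_{n_1}\to\infty$, fix any $\alpha\in(0,1)$: for all $n_1$ large enough one has $C_{n_1} > C_0/\sqrt{\alpha}$ and $1/C'_{n_1} \le \sqrt{\alpha}/96$, so the hypotheses of part (i) hold at level $\alpha$ and $\sup_{BSBM}\mathbb{P}\big(r(\eta_1,\eta_1^0)/n_1 \ge \alpha\big)\to 0$. Since $\alpha\in(0,1)$ was arbitrary, $\eta_1^0$ achieves almost full recovery of $\eta_1$.
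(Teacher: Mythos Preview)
Your overall architecture is exactly that of the paper: the same signal-plus-noise decomposition of $H(\hat A\hat A^\top)$, Davis--Kahan to pass from a spectral-norm bound on the perturbation to an $\ell_2$ bound on $\hat v$, the elementary observation that each misclassified coordinate contributes at least $1/n_1$ to $\|\hat v-\nu\eta_1/\sqrt{n_1}\|_2^2$, and then separate control of the cross terms and the $\hat p$-correction via $\mathbb{E}\|W\eta_2\|_2^2\le 2pn_1n_2$ and $|\hat p-p|\le |\delta-1|p\gamma_1\gamma_2+|(n_1n_2)^{-1}\sum_{i,j}W_{ij}|$. Part (ii) also follows from (i) exactly as you say.

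The one genuine gap is in your control of $\|H(WW^\top)\|_\infty$. The moment version of matrix Bernstein (Tropp's Theorem 6.2, restated as Theorem~\ref{th:th4} in the paper) requires the \emph{matrix} moment condition $\|\mathbb{E}(Y_j^q)\|_\infty\le \frac{q!}{2}R^{q-2}a^2$, not a bound on the \emph{scalar} moments $\mathbb{E}\|Y_j\|_\infty^q$. Your proposed route, $\|\mathbb{E}(Y_j^q)\|_\infty\le \mathbb{E}\|Y_j\|_\infty^q\le \mathbb{E}\|W_{\cdot j}\|_2^{2q}\le (C(n_1p+q))^q$, is valid but far too crude: plugging it back into the Bernstein moment condition with $a^2\asymp n_1p^2$ forces $R\gtrsim (n_1p+1)^3/(n_1p^2)$ already at $q=3$, which in the sparse regime $n_1p\le 1$ gives $R\gtrsim 1/(n_1p^2)$ and in the dense regime gives $R\gtrsim n_1^2p$. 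Either value destroys the $Rt$ term in the Bernstein tail and produces conditions on $p$ that are strictly stronger (by factors polynomial in $n_1$ or $n_2$) than the hypothesis of the theorem. Equivalently, if instead you go through a matrix Rosenthal inequality with $q\asymp\log n_1$, the term $(\sum_j\mathbb{E}\|Y_j\|_\infty^q)^{1/q}$ introduces a factor $n_2^{1/\log n_1}$, which is unbounded for the large-$n_2$ regime the theorem is meant to cover. The paper closes this gap by computing $\|\mathbb{E}(H(W_jW_j^\top)^q)\|_\infty$ \emph{entrywise} via a combinatorial partition argument (Theorem~\ref{thm:new_bernstein}), obtaining the much smaller bound $\|\mathbb{E}(Y_j^q)\|_\infty\le 2q!\,(3(1+2n_1p))^{q-2}\cdot 2n_1p^2$, i.e.\ $R=3(1+2n_1p)$. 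That combinatorics---counting how many index patterns in $\mathbb{E}\big[w_iw_k\prod_\ell w_{i_\ell}^2\big]$ survive the zero-mean cancellation and weighting each surviving pattern by $(2p)^s$---is precisely the ``bookkeeping'' you allude to, but it must happen at the level of the \emph{entries of} $\mathbb{E}(Y_j^q)$, not of the operator norm of $Y_j$.
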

{ Part (i) of Theorem \ref{th:th1} establishes the property of spectral initializer $\eta_1^0$ that we need to prove the exact recovery in Theorem  \ref{th:th3} below.} {Part (ii) of Theorem \ref{th:th1} improves upon the existing sufficient conditions of  almost full recovery {\it by spectral methods} \cite{florescu2016spectral}, cf. Table 1 above. Theorem \ref{th:th1} covers any $n_1,n_2$ with no restriction, and scales as $\sqrt{\frac{\log n_1}{n_1 n_2}}$ rather than $\frac{\log n_1}{\sqrt{n_1 n_2}}$ in the regime $n_{2}\geq n_{1}\log{n_1}$.} 

The proof of Theorem \ref{th:th1} is given in the Appendix. It is based on a variant of matrix Bernstein inequality applied to a sum of independent hollowed rank one random matrices (Theorem \ref{thm:new_bernstein} in the Appendix). {As a consequence of this new matrix concentration result, we have the  following improved bound for the spectral norm of the noise term.
 \begin{restatable}{prop}{secondlemma}\label{lem:expectation}
Assume that $p \geq C%(\delta-1)^{-2}
\left(\sqrt{\frac{\log n_1}{n_1 n_2}} \vee \frac{\log{n_1}}{n_2} \right)$ for some constant $C>0$. Then, there exists a constant $c_*>0$ %depending only on $\delta$ 
such that
\[
\mathbb{E} \left(\Big\| H\big(WW^\top\big) \Big\|^2_\infty \right)\le  c_*\left( 1+\frac{ n_1\log{n_1}}{n_2}\right)n_1n_2p^2\log{n_1}.
\]
\end{restatable}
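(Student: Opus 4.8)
The plan is to obtain Proposition~\ref{lem:expectation} as a consequence of the matrix concentration bound of Theorem~\ref{thm:new_bernstein}, applied to a suitable decomposition of $H(WW^\top)$ into independent hollowed rank-one summands. Writing $W_{\cdot j}$ for the $j$-th column of $W=A-\mathbb{E}(A)$, we have
\[
H\big(WW^\top\big)\;=\;\sum_{j=1}^{n_2}X_j,\qquad X_j:=H\big(W_{\cdot j}W_{\cdot j}^\top\big).
\]
The matrices $X_1,\dots,X_{n_2}$ are independent (the columns of $W$ are independent), symmetric, of hollowed rank-one form, and centered: the entries of $W$ inside a given column are independent and centered, so $\mathbb{E}(W_{\cdot j}W_{\cdot j}^\top)$ is a diagonal matrix and hence $\mathbb{E}(X_j)=H\big(\mathbb{E}(W_{\cdot j}W_{\cdot j}^\top)\big)=0$. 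Thus Theorem~\ref{thm:new_bernstein} is applicable, and the proof reduces to evaluating the two parameters entering that inequality in terms of $n_1,n_2,p$, and then passing from a concentration statement to the bound on $\mathbb{E}\big(\|H(WW^\top)\|_\infty^2\big)$ by integrating the tail (equivalently, by optimizing the moment order at $m\asymp\log n_1$).

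First, the variance parameter. Put $\sigma_{ij}^2:=\mathbb{E}(W_{ij}^2)$; since $p<1/2$ and $\delta\in(0,2)$ we have $|W_{ij}|\le 1$ and $\sigma_{ij}^2\le 2p$. A short expansion of $X_j^2=H(W_{\cdot j}W_{\cdot j}^\top)^2$ gives $(X_j^2)_{ii}=W_{ij}^2\sum_{l\neq i}W_{lj}^2$ and $(X_j^2)_{ik}=W_{ij}W_{kj}\sum_{l\notin\{i,k\}}W_{lj}^2$ for $i\neq k$; taking expectations and using within-column independence, all off-diagonal expectations vanish, so $\mathbb{E}(X_j^2)$ is the diagonal matrix with $i$-th entry $\sigma_{ij}^2\sum_{l\neq i}\sigma_{lj}^2\le 4n_1p^2$. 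Therefore
\[
\Big\|\sum_{j=1}^{n_2}\mathbb{E}(X_j^2)\Big\|_\infty=\max_{1\le i\le n_1}\sum_{j=1}^{n_2}\sigma_{ij}^2\sum_{l\neq i}\sigma_{lj}^2\;\le\;4\,n_1n_2p^2 .
\]
This variance proxy, together with the $n_1$-dimensional union bound built into Theorem~\ref{thm:new_bernstein}, produces the leading contribution $n_1n_2p^2\log n_1$.

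Second, the higher-order parameter. It is governed by moments of $\|X_j\|_\infty\le 2\|W_{\cdot j}\|_2^2=2\sum_iW_{ij}^2$, where $\sum_iW_{ij}^2$ is a sum of independent $[0,1]$-valued variables with total mean $\mu_j:=\sum_i\sigma_{ij}^2\le 2n_1p$. A Poisson-type comparison yields, for every integer $k\ge1$,
\[
\mathbb{E}\Big(\big\|W_{\cdot j}\big\|_2^{2k}\Big)\;\le\;\big(c\max(\mu_j,k)\big)^k ,
\]
which is exactly the moment input Theorem~\ref{thm:new_bernstein} is designed to accept. Feeding both parameters into Theorem~\ref{thm:new_bernstein} and integrating the square of the resulting tail (equivalently, taking $m\asymp\log n_1$) gives a bound of the shape $c'\big(n_1n_2p^2\log n_1+n_1^2p^2\log^2 n_1+\log^2 n_1\big)$. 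The first two terms are precisely the two terms of $\big(1+\frac{n_1\log n_1}{n_2}\big)n_1n_2p^2\log n_1$, and the residual $\log^2 n_1$ is swallowed by the first term because the assumption $p\ge C\big(\sqrt{\log n_1/(n_1n_2)}\vee\log n_1/n_2\big)$ forces $n_1n_2p^2\log n_1\ge C^2\log^2 n_1$. This yields the claim with $c_*$ absorbing all constants.

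The delicate step is the control of the higher-order term, and this is where a textbook matrix Bernstein inequality is insufficient and Theorem~\ref{thm:new_bernstein} is needed. A column of $W$ carrying anomalously many ones produces a summand $X_j$ whose operator norm can be as large as $2\|W_{\cdot j}\|_2^2\le 2n_1$, and substituting this almost-sure bound as the ``jump'' in a standard Bernstein estimate would inflate the second term by polynomial factors and destroy the elbow $1+\frac{n_1\log n_1}{n_2}$. What rescues the estimate is that such heavy columns are rare and that their contributions, being supported on essentially disjoint sets of coordinates, largely cancel in operator norm; quantifying this cancellation by a combinatorial moment computation is the core of Theorem~\ref{thm:new_bernstein}, and it is the part of the argument that I expect to be the main obstacle.
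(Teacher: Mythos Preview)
Your overall plan---invoke Theorem~\ref{thm:new_bernstein} and integrate the resulting tail bound in $t$---is exactly what the paper does, and once Theorem~\ref{thm:new_bernstein} is in hand the proof of Proposition~\ref{lem:expectation} is only a few lines: split the Bernstein exponent via $\exp(-a/(b+c))\le \exp(-a/(2b))+\exp(-a/(2c))$, integrate each piece from the threshold $t_3=6\sqrt{n_1n_2p^2\log n_1}\vee 48(1+2n_1p)\log n_1$, and collect the two terms $n_1n_2p^2\log n_1$ and $n_1^2p^2\log^2 n_1$, using $n_1n_2p^2\ge C^2\log n_1$ to absorb the lower-order remainder. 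So the skeleton is right.

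What is off is the middle of your write-up. Theorem~\ref{thm:new_bernstein} as stated in the paper already has the constants $8n_1n_2p^2$ and $6(1+2n_1p)$ built in; there are no ``two parameters'' left to evaluate. More importantly, your suggestion that the higher-order input is ``governed by moments of $\|X_j\|_\infty\le 2\|W_{\cdot j}\|_2^2$'' together with a Poisson bound $\mathbb{E}\big(\|W_{\cdot j}\|_2^{2k}\big)\le (c\max(\mu_j,k))^k$ is \emph{not} how Theorem~\ref{thm:new_bernstein} is obtained, and it would not give the parameter $R\asymp 1+n_1p$. The matrix Bernstein framework (Theorem~\ref{th:th4}) needs bounds on $\|\mathbb{E}(X_j^q)\|_\infty$, the operator norm of the expectation of the power; replacing this by $\mathbb{E}(\|X_j\|_\infty^q)$ throws away the cancellations coming from the zero-mean off-diagonal entries and overshoots by a factor $n_1$ already at $q=2$ (your own variance computation gives $4n_1p^2$, while the crude route gives $(cn_1p)^2$). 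Feeding the crude bound in for $q\ge 3$ forces $R$ to scale like $n_1^2p$ rather than $1+n_1p$, destroying the elbow. The paper instead bounds $|\mathbb{E}(h_{ik}(q))|$ entrywise via a partition-counting argument, which is precisely the ``combinatorial moment computation'' you allude to in your last paragraph.

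In short: for Proposition~\ref{lem:expectation} itself, just cite Theorem~\ref{thm:new_bernstein} as a black box and integrate; drop the paragraphs that try to re-derive its inputs. If you want to sketch why Theorem~\ref{thm:new_bernstein} holds, point to the entrywise bound \eqref{eq:moment_b} on $\|\mathbb{E}(H(W_jW_j^\top)^q)\|_\infty$, not to moments of $\|X_j\|_\infty$.
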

On the other hand, for the non-hollowed matrix $WW^\top$, using  matrix Bernstein inequality, we can only obtain that  
\[
\mathbb{E} \left(\Big\| WW^\top \Big\|^2_\infty \right) = O\big((n_1+n_2)^2p^2(\log{n_1})^2\big).
\]
}

Comparing the above two bounds explains why our hollowed spectral method is superior to the standard SVD procedure even in the low-dimensional regime $n_2 = O(n_1\log{n_1})$.

Our next point is to explain why applying the hollowing operator $H(\cdot)$ is better than debiasing by subtraction of $\mathbb{E}(WW^{\top})$. {Inequality~\eqref{eq:hollow_debias} is useful to bound the spectral norm of the hollowed Gram matrix, under the Gaussian Mixture Model (cf. \cite{ndaoud2018sharp}). What is more, one can show that \eqref{eq:hollow_debias} is tight when the noise is isotropic and normal, suggesting that hollowing and debiasing are almost equivalent in the Gaussian Mixture Model. Suprisingly, the same inequality turns out to be loose in the the BSBM model.} It turns out that hollowing the Gram matrix can be strictly better. %than debiasing by subtracting the expectation.
Indeed, the next proposition shows that debiasing the Gram matrix through covariance subtraction can be suboptimal. 

 \begin{restatable}{prop}{firstlemma}\label{lem:suboptimal}
Let %$\gamma_1<1$, $\gamma_2<1$, 
$n_2\geq n_1\log{n_1}$ and 
\begin{equation}\label{brackp}
  18\sqrt{\frac{\log n_1}{n_1 n_2}} \leq p \leq \frac{1}{206 \,n_1\log{n_1}}.   
\end{equation} Then
\[
\mathbb{E}\left(\|WW^{\top} - \mathbb{E}(WW^{\top})\|^2_{\infty} \right) \geq \frac{n_2 p }{40}.
\]
\end{restatable}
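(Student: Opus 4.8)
The plan is to bound $\|M\|_{\infty}$ from below, where $M:=WW^{\top}-\mathbb{E}(WW^{\top})$, by the absolute value of a single well-chosen diagonal entry, and then to evaluate that entry's second moment exactly. Since $M$ is a symmetric $n_1\times n_1$ matrix, $\|M\|_{\infty}\ge |M_{i_0i_0}|$ for any fixed $i_0\in V_1$ and every realization, and $\mathbb{E}(M)=0$, so
\[
\mathbb{E}\big(\|M\|_{\infty}^{2}\big)\ \ge\ \mathbb{E}\big(M_{i_0i_0}^{2}\big)\ =\ \operatorname{Var}\Big(\sum_{j=1}^{n_2}W_{i_0j}^{2}\Big).
\]
Thus the problem reduces to lower bounding the variance of the $i_0$-th diagonal entry of the Gram matrix --- that is, the degree fluctuation of vertex $i_0$, which is precisely the part of the noise that debiasing (subtraction of $\mathbb{E}(WW^\top)$) leaves untouched.

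To compute this variance, write $q_{ij}:=\mathbb{E}(A_{ij})\in\{\delta p,(2-\delta)p\}$ and use $A_{ij}^{2}=A_{ij}$ to get $W_{ij}^{2}=(1-2q_{ij})A_{ij}+q_{ij}^{2}$; hence $\sum_j W_{i_0j}^{2}$ is, up to an additive constant, a sum of independent Bernoulli variables, and $\operatorname{Var}\big(\sum_j W_{i_0j}^{2}\big)=\sum_j(1-2q_{i_0j})^{2}q_{i_0j}(1-q_{i_0j})$. The upper bound on $p$ in \eqref{brackp} gives $q_{i_0j}\le 2p\le \tfrac{1}{103\,n_1\log n_1}<\tfrac{1}{100}$ (recall $n_1\ge2$), so each factor satisfies $(1-2q_{i_0j})^{2}(1-q_{i_0j})\ge \tfrac12$, whence $\operatorname{Var}\big(\sum_j W_{i_0j}^{2}\big)\ge \tfrac12\sum_{j}q_{i_0j}$. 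It remains to choose $i_0$ with $\sum_j q_{i_0j}$ large. The hypothesis $|n_{1+}-n_{1-}|/n_1\le\gamma_1<1$ forces $n_{1+},n_{1-}\ge1$, so $V_1$ contains a vertex $i_{+}$ with label $+1$ and a vertex $i_{-}$ with label $-1$; for each column $j$, exactly one of $i_+,i_-$ shares its label, so $q_{i_+j}+q_{i_-j}=\delta p+(2-\delta)p=2p$, and summing over $j$ gives $\sum_j q_{i_+j}+\sum_j q_{i_-j}=2n_2p$. Taking for $i_0$ whichever of $i_\pm$ attains the larger sum yields $\sum_j q_{i_0j}\ge n_2p$, and therefore $\mathbb{E}(\|M\|_{\infty}^{2})\ge \tfrac12 n_2p\ge n_2p/40$. (The constant $40$ is deliberately loose; the hypotheses $n_2\ge n_1\log n_1$ and $p\ge 18\sqrt{\log n_1/(n_1n_2)}$ play no role in this argument beyond ensuring that the range \eqref{brackp} is non-empty and that the resulting bound dominates the hollowed-matrix bound of Proposition~\ref{lem:expectation}.)

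I do not anticipate a genuine obstacle here: the only mildly delicate steps are the exact variance computation via $A_{ij}^{2}=A_{ij}$ and the selection of a row $i_0$ with $\sum_j q_{i_0j}\ge n_2p$ \emph{without} assuming balanced partitions, which the pairing of $i_+$ and $i_-$ handles. Conceptually, the statement simply quantifies the fact that the bottleneck for the debiased Gram matrix is the order-$\sqrt{n_2p}$ fluctuation of its diagonal, which the hollowing operator $H(\cdot)$ removes entirely --- explaining why the estimator of Section~\ref{sec:sec1} is preferable to debiasing.
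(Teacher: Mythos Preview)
Your proof is correct, and it is in fact more direct than the paper's. Both arguments isolate a single diagonal entry $M_{i_0i_0}=\|X_{i_0}\|_2^{2}-\mathbb{E}\|X_{i_0}\|_2^{2}$ and lower bound its variance by a constant times $n_2p$; the difference lies in how that diagonal entry is tied to $\|M\|_\infty$. You use the one-line fact $\|M\|_\infty\ge|e_{i_0}^\top M e_{i_0}|=|M_{i_0i_0}|$, valid for any symmetric matrix. The paper instead writes $M=\diag(M)+H(WW^\top)$ (recall $\mathbb{E}(WW^\top)$ is diagonal so $H(M)=H(WW^\top)$), applies the reverse triangle inequality to obtain $\|M\|_\infty\ge|M_{i_0i_0}|-\|H(WW^\top)\|_\infty$, squares via $(a-b)^2\ge a^2/2-b^2$, and then must \emph{upper}-bound $\mathbb{E}\|H(WW^\top)\|_\infty^{2}$ using Theorem~\ref{thm:new_bernstein}. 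That last step is where the lower bound on $p$ and the hypothesis $n_2\ge n_1\log n_1$ are actually consumed in the paper's argument; in yours they are, as you note, superfluous for the inequality itself. Your row-selection device (pairing $i_+$ with $i_-$ so that $q_{i_+j}+q_{i_-j}=2p$) is also slightly cleaner than the paper's, which instead argues that some row contains at least $n_2/2$ entries with parameter $\max(\delta p,(2-\delta)p)$. The paper's detour has the expository merit of exhibiting both sides of the comparison---diagonal variance of order $n_2p$ versus hollowed norm of order $n_1n_2p^2\log n_1$---inside a single proof, but your route yields the stated bound (indeed the sharper $n_2p/2$) with less machinery and under weaker assumptions.
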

Proposition \ref{lem:suboptimal} deals with the high-dimensional regime $n_2\geq n_1\log{n_1}$ under the additional restriction $n_1(\log n_1)^3 = O(n_2)$ that follows from condition \eqref{brackp}.   {Notice that for smaller $p$ satisfying \eqref{brackp}, we  have} 
{$ n_1n_2p^2\log{n_1} = {o}(n_2 p)$}, so that inequality \eqref{eq:hollow_debias} is loose. This explains the suboptimality of  debiased spectral estimator. We further check this fact through simulations in Section \ref{sec:numerical}.  Proposition \ref{lem:suboptimal} also explains why the result in \cite{cai2019subspace} is suboptimal. Indeed, the assumptions in \cite{cai2019subspace} are such that the spectral norm of matrix ${\rm diag}\big(WW^{\top} - \mathbb{E}(WW^{\top})\big)${ (that scales as $\sqrt{n_2 p}$ )} is not bigger in order of magnitude than the spectral norm of the corresponding off-diagonal matrix.
 While this fact is true in several other settings,  it is not in the high-dimensional regime of bipartite clustering, cf. Proposition \ref{lem:suboptimal}.

The question of whether the spectral estimator $\eta_1^0$ can achieve exact recovery under the conditions of Theorem \ref{th:th1} remains open. Pursuing similar arguments as developed in \cite{abbe2017} for the case of SBM would lead to a logarithmic dependence of order $\log{n_1}$ or bigger in the sufficient condition (as it is the case in \cite{cai2019subspace}), and not to the desired $\sqrt{\log{n_1}}$ .
By analogy to the Gaussian Mixture Model, for which it was shown recently in \cite{abbe2020ellp} that the spectral estimator is optimal for exact recovery, we conjecture that the condition $p > C(\delta-1)^{-2} \sqrt{\frac{\log n_1}{n_1 n_2}}$ is sufficient for $\eta_1^0$ to achieve exact recovery whenever $n_{2} \geq n_{1}\log n_{1}$.  Proving such a result would most likely require developing novel concentration bounds for Bernoulli covariance matrices.
%The next section is devoted to construction of an estimator achieving exact recovery under the conditions of part \textit{(i)} of Theorem \ref{th:th1}.

\section{Exact recovery by the hollowed Lloyd's algorithm}

In this section, we present sufficient conditions, under which the hollowed Lloyd's algorithm $(\hat{\eta}^k)_{k \geq 0}$ defined in \eqref{eq:eq4} with spectral initialization achieves exact recovery for all $k$ large enough. 

\begin{thm}\label{th:th3}
Let $(\hat{\eta}^k)_{k \geq 0}$ be the recursion \eqref{eq:eq4} initialized with the spectral 
estimator \eqref{eq:eq2} for $\hat{p}$ given by \eqref{eq:estim_p}. There exists an absolute constant $C>0$ such that if the following conditions hold:
$$\left\{
    \begin{array}{ll}
       %\sout{ n_2 > n_1 \log n_{1},} \\
        \gamma_1\gamma_2 \leq 1/480,\\
        p \geq C (\delta-1)^{-2}\left(\sqrt{\frac{\log n_1}{n_1 n_2 }} \vee \frac{\log{n_1}}{n_2} \right),
    \end{array}
\right.$$
then the estimator $\hat{\eta}^{m}$ with $m=m(n_1) >  \frac{\log n_1}{2\log 2}-\frac{3}{2}$ achieves exact recovery of $\eta_1$.
\end{thm}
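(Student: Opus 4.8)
\emph{Overview of the plan.} I would first split $H(\hat A\hat A^\top) = S + N$, with signal $S = (\delta-1)^2 p^2 n_2\, H(\eta_1\eta_1^\top)$ (cf.\ \eqref{eq:eq1}) and noise $N$ gathering $H(WW^\top)$, the hollowed cross terms $p(\delta-1)H(W\eta_2\eta_1^\top+\eta_1\eta_2^\top W^\top)$, and all the terms generated by the correction $\epsilon:=p-\hat p$. Writing $\xi_k := r(\eta_1,\hat\eta^k)/(2n_1)$ for the mislabeling fraction and fixing w.l.o.g.\ the representative of the sign class of $\hat\eta^0$ closest to $\eta_1$ (a choice that propagates, \eqref{eq:eq4} being odd in $\hat\eta$), the plan is to exhibit a high-probability event $\mathcal E$ on which $\xi_{k+1}\le\tfrac14\xi_k$ whenever $\xi_k\le\alpha$, for a small absolute constant $\alpha$. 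Starting from $\xi_0\le\alpha$ — supplied by Theorem \ref{th:th1}(i) — iterating then gives $\xi_m\le 4^{-m}\alpha$, and the choice $m>\tfrac{\log n_1}{2\log 2}-\tfrac32$ forces $\xi_m n_1<1$, hence $\xi_m=0$ since $\xi_m n_1$ is a nonnegative integer.

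\emph{The deterministic contraction step.} For a fixed vertex $i$ one computes $(S\hat\eta^k)_i=(\delta-1)^2 p^2 n_2\,(\eta_1)_i\big(n_1(1-2\xi_k)-(\eta_1)_i\hat\eta^k_i\big)$, which for $\xi_k\le\alpha\le\tfrac18$ has sign $(\eta_1)_i$ and modulus at least $s:=\tfrac12(\delta-1)^2 p^2 n_1 n_2$; thus a misclassification $\hat\eta^{k+1}_i\ne(\eta_1)_i$ forces $|(N\hat\eta^k)_i|\ge s$. Splitting $N\hat\eta^k=N\eta_1+N(\hat\eta^k-\eta_1)$, misclassification implies $|(N\eta_1)_i|\ge s/2$ or $|(N(\hat\eta^k-\eta_1))_i|\ge s/2$. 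On $\mathcal E$ I would impose $\max_i|(N\eta_1)_i|\le s/2$, killing the first alternative, and then
\[
\xi_{k+1}n_1\le\#\big\{i:|(N(\hat\eta^k-\eta_1))_i|\ge s/2\big\}\le\frac{4\|N(\hat\eta^k-\eta_1)\|_2^2}{s^2}\le\frac{4\|N\|_\infty^2\,\|\hat\eta^k-\eta_1\|_2^2}{s^2}=\frac{16\|N\|_\infty^2}{s^2}\,\xi_k n_1,
\]
using $\|\hat\eta^k-\eta_1\|_2^2=4\xi_k n_1$. The key point is that this chain is purely deterministic given $\mathcal E$: the random iterate $\hat\eta^k$ enters only through the \emph{cardinality} of its error set, not through its location, so — unlike in typical analyses of iterative algorithms — no decoupling between the iterates and the noise is ever required. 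Demanding also $\|N\|_\infty\le s/8$ on $\mathcal E$ then yields $\xi_{k+1}\le\tfrac14\xi_k$, as wanted, and in particular the induction closes.

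\emph{The good event, and the main obstacle.} Everything reduces to showing $\mathbb P(\mathcal E)\to1$, where $\mathcal E$ carries (a) $\|N\|_\infty\le\tfrac18 s$, (b) $\max_i|(N\eta_1)_i|\le\tfrac12 s$, and (c) $\xi_0\le\alpha$; this is where the work lies. For (c) I would invoke Theorem \ref{th:th1}(i), legitimate because $\gamma_1\gamma_2\le1/480$ satisfies its hypothesis for a suitable small $\alpha$. The $\epsilon$-terms in $N$ are handled by noting that the bias of $\hat p$ equals $(\delta-1)p\,(n_{1+}-n_{1-})(n_{2+}-n_{2-})/(n_1n_2)$, of modulus at most $|\delta-1|p\,\gamma_1\gamma_2$ (keeping the factor $|\delta-1|$ is essential), while its stochastic part is $O(\sqrt{p/(n_1n_2)})$; under the assumed lower bound on $p$ together with $\gamma_1\gamma_2\le1/480$ these are dominated by the signal in both (a) and (b). For (a), the dominant contribution to $\|N\|_\infty$ is $\|H(WW^\top)\|_\infty$, for which I would use the high-probability counterpart of Proposition \ref{lem:expectation} — proved via the matrix Bernstein inequality for sums of hollowed rank-one matrices (Theorem \ref{thm:new_bernstein}) together with combinatorial moment bounds — to get $\|H(WW^\top)\|_\infty\lesssim p\sqrt{n_1 n_2\log n_1}\,(1+\sqrt{n_1\log n_1/n_2})$, while the rank-one cross terms contribute $\lesssim|\delta-1|p^{3/2}n_1\sqrt{n_2}$ via $\|W\eta_2\|_2\lesssim\sqrt{n_1 n_2 p}$; each of these is $\le\tfrac18 s$ precisely under $p\ge C(\delta-1)^{-2}\big(\sqrt{\log n_1/(n_1n_2)}\vee\log n_1/n_2\big)$ with $C$ large, which is exactly where the elbow at $n_2\asymp n_1\log n_1$ emerges. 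For (b), the crucial observation is that the diagonal contributions cancel, $(H(WW^\top)\eta_1)_i=\sum_\ell W_{i\ell}\,(W^\top\eta_1)^{(-i)}_\ell$, so that — conditioning on the column projections and applying a Bernstein bound followed by a union over the $n_1$ coordinates (or, more robustly when $n_2$ is very large, a combinatorial moment computation in the spirit of Proposition \ref{lem:expectation}) — one gets a deviation of order $p\sqrt{n_1 n_2\log n_1}$; it is this $\sqrt{\log n_1}$, rather than $\log n_1$, that lets the threshold scale as $\sqrt{\log n_1/(n_1n_2)}$ in the high-dimensional regime. The hard part, which I expect to be the main obstacle, is the operator-norm control of $H(WW^\top)$ in (a): off-the-shelf bounds through $\|WW^\top-\mathbb E(WW^\top)\|_\infty$ lose a logarithmic factor — precisely what Proposition \ref{lem:suboptimal} quantifies — so the tailored concentration inequality of Theorem \ref{thm:new_bernstein} is indispensable.
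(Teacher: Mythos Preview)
Your proposal is correct and follows essentially the same route as the paper: a good event carrying an operator-norm bound on the noise (your (a), the paper's event $B$), a uniform coordinate-wise control of $N\eta_1$ (your (b), the paper's oracle events $O_i$), and the spectral initialization (your (c), via Theorem~\ref{th:th1}(i) with $\alpha=1/25$), on which the same Markov-type counting argument yields the deterministic $\tfrac14$-contraction. The only minor difference is that for (b) the paper conditions on the $i$th row $X_i$ (treating $T_3=\eta_{1i}\sum_{k\ne i}\eta_{1k}X_i^\top X_k$ as a sum of $(n_1-1)n_2$ conditionally independent terms bounded by $1$) rather than on the remaining rows; this gives $R=1$ in Bernstein and cleanly avoids any $\log n_2$ dependence, which is exactly the issue your hedge via ``combinatorial moment computation'' anticipates.
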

Some comments are in order here.
\begin{enumerate}
    \item The approach that we developed to construct $\hat{\eta}^{m}$ is general. In fact, it is a tool that transforms any estimator achieving weak recovery into a new estimator achieving exact recovery under mild assumptions. This can be readily seen from the proof of Theorem \ref{th:th3}. 
    \item Numerically, the procedure $(\hat{\eta}^k)_{k \geq 0}$ considered in Theorem \ref{th:th3} has the same complexity as the spectral initializer $\eta_1^0$. It remains an open question whether  the result of Theorem \ref{th:th3} holds with random initialization, {which would further bring down the complexity}.
    \item We conjecture that the conditions  $ p \geq C (\delta-1)^{-2}\sqrt{\frac{\log n_1}{n_1 n_2 }}$ and $n_2 > n_1 \log n_{1}$ of Theorem \ref{th:th3} cannot be improved. In the next section, we provide a result supporting this fact. The imbalance condition $\gamma_1\gamma_2 = O(1)$ is only required to handle the estimation of $p$. If $p$ is known the results of this paper remain valid with no assumption on $\gamma_1$ and $\gamma_2$.
\end{enumerate}

\section{Impossibility result for a supervised oracle}\label{sec:diagram} Motivated by the spiked reduction of the BSBM model when $p$ is known, we define the supervised oracle as follows 
\[
\tilde\eta_1 = \sign ( H(\tilde A \tilde A^\top )\eta_{1}).
\]
 {Note that this oracle is extremely powerful. We set the definition of the oracle in a compact form using the hollowed matrix $H(\cdot)$ for the purpose of shorter writing. However, if one unfolds this definition, it turns out that the oracle makes a decision about one vertex by using the majority vote of all the other vertices. In other words, this oracle has access to {\it all except one} labels and uses these known labels to predict the remaining one. Specifically, for each label $\eta_{1i}$ to estimate, the supervised oracle has access to the remaining labels $(\eta_{1j}, j\ne i)$ and to $p$. We refer the reader to \cite{ndaoud2018sharp} for more discussion about such an oracle structure. }

We state below an impossibility result corresponding to the supervised oracle.
\begin{restatable}{prop}{firstprop}\label{oracc}
Assume that $n_2 \geq n_1\log{n_1}$ and $\gamma_1 = \gamma_2=0$. There exists $c_\delta>0$ depending only on $\delta$ such that if $p = \sqrt{c_\delta\frac{\log{n_1}}{n_1n_2}}$ then for the oracle $\tilde\eta_1$ we have
\begin{align*}
\underset{n_1 \to \infty}{\lim}  \sum_{i=1}^{n_1}\mathbb{P}(\tilde \eta_{1i} \neq \eta_{1i}) =\infty.
\end{align*}
%\[
%\underset{n_1 \to \infty}{\lim} \mathbb{E}|\eta^{o} - \eta| = \infty.
%\]
\end{restatable}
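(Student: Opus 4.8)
The plan is to reduce the statement to a lower bound on the probability that a single vertex is misclassified by the oracle, and then show this probability is at least $n_1^{-1+\epsilon}$ for some $\epsilon>0$, so that the sum over $n_1$ vertices diverges. Fix a vertex $i\in V_1$, say with $\eta_{1i}=1$. Unfolding the definition $\tilde\eta_1 = \sign(H(\tilde A\tilde A^\top)\eta_1)$, the $i$th coordinate is $\sign\big(\sum_{j\ne i}(\tilde A\tilde A^\top)_{ij}\eta_{1j}\big)$, where $(\tilde A\tilde A^\top)_{ij} = \sum_{\ell=1}^{n_2}\tilde A_{i\ell}\tilde A_{j\ell}$ and $\tilde A_{i\ell}=A_{i\ell}-p$. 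The oracle errs on $i$ when this sum is negative (or zero). Writing $Z_i := \sum_{j\ne i}\eta_{1j}\sum_{\ell=1}^{n_2}\tilde A_{i\ell}\tilde A_{j\ell}$, I would first compute $\mathbb{E}(Z_i)$ and $\mathrm{Var}(Z_i)$. Since $\gamma_1=\gamma_2=0$ the partitions are balanced, so $\sum_{j}\eta_{1j}\eta_{2\ell}\eta_{1j} = \eta_{2\ell}\sum_j\eta_{1j}^2 \asymp n_1\eta_{2\ell}$ contributes the signal, giving $\mathbb{E}(Z_i) \asymp (\delta-1)^2 p^2 n_1 n_2$ up to the balance corrections. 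The fluctuations of $Z_i$ are dominated by the terms $\tilde A_{i\ell}$ (which are shared across all $j$): conditionally on the row $(\tilde A_{i\ell})_\ell$, $Z_i$ is a sum of independent contributions, and its conditional mean is $\sum_\ell \tilde A_{i\ell}\cdot\mathbb{E}\big(\sum_{j\ne i}\eta_{1j}\tilde A_{j\ell}\big)\asymp (\delta-1)p\eta_{2\ell}n_1\sum_\ell \tilde A_{i\ell}\eta_{2\ell}$. Hence the effective randomness is carried by $S_i := \sum_{\ell=1}^{n_2}\tilde A_{i\ell}\eta_{2\ell}$, a centered sum of $n_2$ bounded independent variables with variance $\asymp n_2 p$.

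The second step is the key probabilistic estimate: the oracle errs on $i$ roughly when $(\delta-1)pn_1 S_i$ is negative enough to overcome $\mathbb{E}(Z_i)\asymp (\delta-1)^2p^2n_1n_2$, i.e. when $S_i \lesssim -(\delta-1)pn_2$. Since $S_i$ has standard deviation of order $\sqrt{n_2 p}$, this is a deviation of order $\frac{(\delta-1)pn_2}{\sqrt{n_2 p}} = (\delta-1)\sqrt{n_2 p}$ standard deviations. Plugging in $p = \sqrt{c_\delta\log n_1/(n_1n_2)}$ gives $n_2 p = \sqrt{c_\delta n_2\log n_1/n_1}$, and in the regime $n_2\ge n_1\log n_1$ one has $n_2 p \asymp \sqrt{c_\delta}\,\sqrt{\log n_1}\cdot\sqrt{n_2/(n_1\log n_1)} \gtrsim \sqrt{c_\delta}\log n_1$; more precisely $(\delta-1)^2 n_2 p^2 = (\delta-1)^2 c_\delta \log n_1 / n_1 \cdot n_2 \ge (\delta-1)^2 c_\delta \log n_1$ — wait, I must track this carefully, but the upshot is that the required deviation corresponds to $\Theta(\sqrt{c_\delta\log n_1})$ standard deviations when $n_2\asymp n_1\log n_1$ and is no larger for bigger $n_2$. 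A Gaussian-type lower bound (a Paley–Zygmund or Berry–Esseen argument, or a direct binomial lower bound on the left tail) then yields $\mathbb{P}(S_i\lesssim -(\delta-1)\sqrt{n_2 p}\cdot\sqrt{n_2 p}) \ge \exp(-c'(\delta)\,c_\delta\log n_1) = n_1^{-c'(\delta)c_\delta}$. Choosing $c_\delta$ small enough that $c'(\delta)c_\delta < 1$ gives $\mathbb{P}(\tilde\eta_{1i}\ne\eta_{1i}) \ge n_1^{-1+\epsilon}$ for some $\epsilon>0$.

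The third step is bookkeeping: summing over $i=1,\dots,n_1$ gives $\sum_i \mathbb{P}(\tilde\eta_{1i}\ne\eta_{1i}) \ge n_1\cdot n_1^{-1+\epsilon} = n_1^\epsilon \to \infty$, which is the claim. To make the tail bound rigorous I would condition on $S_i$ and then separately control the remaining (smaller) fluctuations of $Z_i$ around its conditional mean, showing they cannot rescue the sign with probability bounded away from $0$ on the bad event for $S_i$; Chebyshev on the conditional variance suffices here because that variance is of strictly smaller order than $((\delta-1)pn_1 S_i)^2$ on the event $|S_i|\asymp\sqrt{n_2 p}$.

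The main obstacle is establishing the lower bound on the left-tail probability $\mathbb{P}(S_i \le -t)$ for $t\asymp (\delta-1)n_2 p$ with the correct constant in the exponent, since the whole point is to get exponent strictly less than $1$ in $n_1^{-1+\epsilon}$ and this forces one to be quantitatively careful: a crude sub-Gaussian bound gives the right shape $e^{-t^2/(c n_2 p)}$ but one needs a matching \emph{lower} bound on this tail (the standard direction in concentration is the upper bound). For sums of centered Bernoulli-type variables, such a lower bound follows from a reverse/anti-concentration estimate — e.g. comparing with the corresponding binomial and using Stirling, or invoking a local CLT in the moderate-deviation range, valid because $t/\sqrt{n_2 p} \asymp \sqrt{\log n_1} = o((n_2p)^{1/6})$ in this regime — but it has to be done with explicit constants so that the resulting exponent can be driven below $1$ by taking $c_\delta$ small. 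A secondary technicality is that the one-removed vertex $i$ breaks exact balance and introduces lower-order terms of size $O(p^2 n_1 n_2)$ and $O(p n_1)$ into $\mathbb{E}(Z_i)$; these are negligible relative to the signal, but must be carried through so that the threshold on $S_i$ is correctly located.
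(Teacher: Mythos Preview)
Your strategy has the right overall shape (lower-bound the per-vertex misclassification probability by $n_1^{-1+\epsilon}$ and sum), but the identification of the ``driving randomness'' is wrong, and this causes the argument to fail precisely in the regime $n_2\gg n_1\log n_1$ that the proposition must cover.

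First a small slip: $S_i=\sum_\ell \tilde A_{i\ell}\eta_{2\ell}$ is \emph{not} centered; since $\mathbb{E}(\tilde A_{i\ell})=(\delta-1)p\,\eta_{1i}\eta_{2\ell}$ one has $\mathbb{E}(S_i)=(\delta-1)p\,n_2\,\eta_{1i}$. This does not change the size of the required deviation, which is still of order $(\delta-1)pn_2$, but it means the signal is already encoded in $S_i$ rather than being an additional term to ``overcome''.

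The real gap is the claim that the number of standard deviations, $(\delta-1)\sqrt{n_2p}$, corresponds to an exponent $\Theta(c_\delta\log n_1)$ that ``is no larger for bigger $n_2$''. With $p=\sqrt{c_\delta\log n_1/(n_1n_2)}$ one gets $n_2p=\sqrt{c_\delta\,n_2\log n_1/n_1}$, which \emph{grows} with $n_2$. So any Gaussian-type tail lower bound for $S_i$ gives at best $\exp\big(-c(\delta-1)^2\sqrt{c_\delta\,n_2\log n_1/n_1}\big)$, which is $\ll n_1^{-1}$ once $n_2/(n_1\log n_1)$ is large. Your route therefore proves the result only at the boundary $n_2\asymp n_1\log n_1$, not for all $n_2\ge n_1\log n_1$ as required.

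The paper's proof reverses the roles of the two sources of randomness. It \emph{conditions} on the row $A_i$ (restricting to a typical event $F$ where $\sum_j A_{ij}\asymp n_2p$), and then exploits the randomness in the \emph{other} rows. After conditioning, the relevant quantity is a sum $\alpha$ over indices $(k,j)$ with $\eta_{1k}\ne\eta_{1i}$ and $A_{ij}=1,\ \eta_{2j}=\eta_{1i}$; the number of such pairs is $\Theta(n_1n_2p)$, and conditionally $\alpha$ is exactly Binomial with parameter $(2-\delta)p$, hence mean $\Theta(n_1n_2p^2)=\Theta(c_\delta\log n_1)$. The error event reduces to $\alpha$ exceeding a threshold that is also $\Theta(c_\delta\log n_1)$, so the anti-concentration needed is on the scale of the mean. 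A direct Stirling estimate on the binomial point mass (the paper's Lemma~\ref{lem:lower_bound}) then yields $\mathbb{P}(\alpha\ge t)\ge c\,n_1^{-c'c_\delta}$ with constants depending only on $\delta$ and, crucially, \emph{uniformly in $n_2$}. Choosing $c_\delta$ small makes the exponent strictly less than $1$.

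In short: the randomness you should exploit is not in $S_i$ (the $i$th row), but in the restriction of the other rows to the $O(n_2p)$ columns where $A_{ij}=1$. That restriction has the correct, $n_2$-independent scale $n_1n_2p^2=c_\delta\log n_1$. Your ``third step'' (Chebyshev on the leftover fluctuations) is essentially right, but it belongs on the other side: one first freezes $A_i$ at a typical value via Chebyshev-type bounds, and then lower-bounds the conditional tail of the remaining (binomial) randomness.
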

Proposition \ref{oracc} shows that condition $p = \Omega\left(\sqrt{\frac{\log{n_1}}{n_1n_2}}\right)$ is necessary for the supervised oracle to achieve exact recovery when $n_2 \geq n_1\log{n_1}$. Combining this result with the sufficient conditions for exact recovery from Theorem \ref{th:th3}, {we can  now %the full picture in the zone where $ n_1(\log n_1)^4 = O(n_2)$. The diagram below 
complete the diagram in \cite{florescu2016spectral}, which compares {the exact recovery conditions for} SVD and for the debiased spectral method when $n_2 \geq n_1(\log n_1)^4$.} We recall here that the SVD estimator is the one returning signs of the second eigenvector of $AA^\top$. In \cite{florescu2016spectral}, a debiased spectral method is also considered, which uses as an estimator the signs of the second eigenvector of $AA^\top - \mathbb{E}(WW^\top)$. %the hollowed version $H(AA^\top)$. 
Under perfect balance (that is, $\gamma_1=\gamma_2=0$),
$\mathbb{E}(WW^\top)$ is proportional to $\mathbf{I}_{n_1}$ and hence SVD and debiased spectral method coincide in that case, while in general the debiased spectral method outperforms the SVD estimator. Comparison of {the oracle and of} the three methods: SVD, debiased spectral (DS) and hollowed Lloyd's (HL), in the general case of imbalance, {and under the condition $n_2 \geq n_1(\log n_1)^4$}, can be summarized as follows :

\begin{tikzpicture}

% horizontal axis
\draw[->] (0,0) -- (12,0) node[anchor=north] {$p^2$};
% labels
\draw	(0,0) node[anchor=north] {0}
		(2.7,0) node[anchor=north] {$\frac{\log{n_1}}{n_1n_2}$}
		(6,0) node[anchor=north] {$\frac{1}{n_1^{4/3}n_2^{2/3}}$}
		(9.3,0) node[anchor=north] {$\frac{1}{n_1^2}$};
% ranges
\draw	(1.5,1) node{\bf{\color{red}failure of }};
\draw	(1.5,0.5) node{\bf{\color{red} the oracle}};

\draw	(4.5,1) node{\bf{\color{red}failure of DS }};
\draw	(4.5,0.5) node{\bf{\color{blue}success of HL}};

\draw	(7.8,1) node{\bf{\color{red}failure of SVD }};
\draw	(7.8,0.5) node{\bf{\color{blue} success of DS}};

\draw	(10.5,1) node{\bf{\color{blue}success   }};
\draw	(10.5,0.5) node{\bf{\color{blue} of SVD }};

% vertical axis
%\draw[->] (0,0) -- (0,4) node[anchor=east] {$\Phi_{o}(a)$};
% nominal speed
\draw[dotted] (2.7,0) -- (2.7,2);
\draw[dotted] (9.3,0) -- (9.3,2);
\draw[dotted] (6,0) -- (6,2);

% Us
%\draw[thick] (0,3) -- (4,3);
%\draw[thick] (6,1) -- (10,1);%label

% Psis
%\draw[thick,dashed]  (4,3) parabola[bend at end] (6,1); %label

\end{tikzpicture}
 {This hierarchy of procedures becomes apparent in} the simulations given in the next section.

\section{Numerical experiments}\label{sec:numerical}

The goal of this section is to provide numerical evidence to our theory.  We compare the performance of methods defined previously, namely:
\begin{itemize}
\item SVD estimator (SVD),
    \item debiased spectral estimator (DS),
    \item diagonal deletion SVD estimator (DD),
    \item hollowed Lloyd's  algorithm with spectral initialization (HL),
    \item the oracle procedure (O).
\end{itemize}
 In what follows, we fix the number of labels  $n=300$,  the imbalance $\gamma_1 = 0$, $\gamma_2 = 0.5$ and $\delta = 0.5$. For the sake of readability of plots, we define the parameters $a$ and $b $ such that
\[
p = \sqrt{a} / n_1 \quad \text{and} \quad b = n_1(\log{n_1} )/ n_2.
\]
According to our improved sufficient conditions and using the above parameterization we expect the phase transition for exact recovery to happen at 
\[
a \geq C_\delta (b \vee b^2)
\]
for some $C_{\delta}>0$. %\simo{\sout{when $b \geq 1$}}. 
We set up the simulations as follows. We consider $b\in\{0.1,0.5,5\}$ and we take $a$ on a uniform grid of $20$ points in a region where the phase transition occurs. %\simo{\sout{Although our theory holds for $b\leq 1$, we added simulations for the case of $b >1$}}.
For each such $(a,b)$, we repeat the simulation $1000$ times. Figure 1 presents the empirical probabilities of exactly recovering the vector of true labels $\eta_1$. 
%(figure \ref{fig:var})(figure \ref{fig:max})

\begin{figure}[ht]
\centering
\hspace{-.cm}
\begin{subfigure}{}
  \centering
  \includegraphics[width=.31\linewidth]{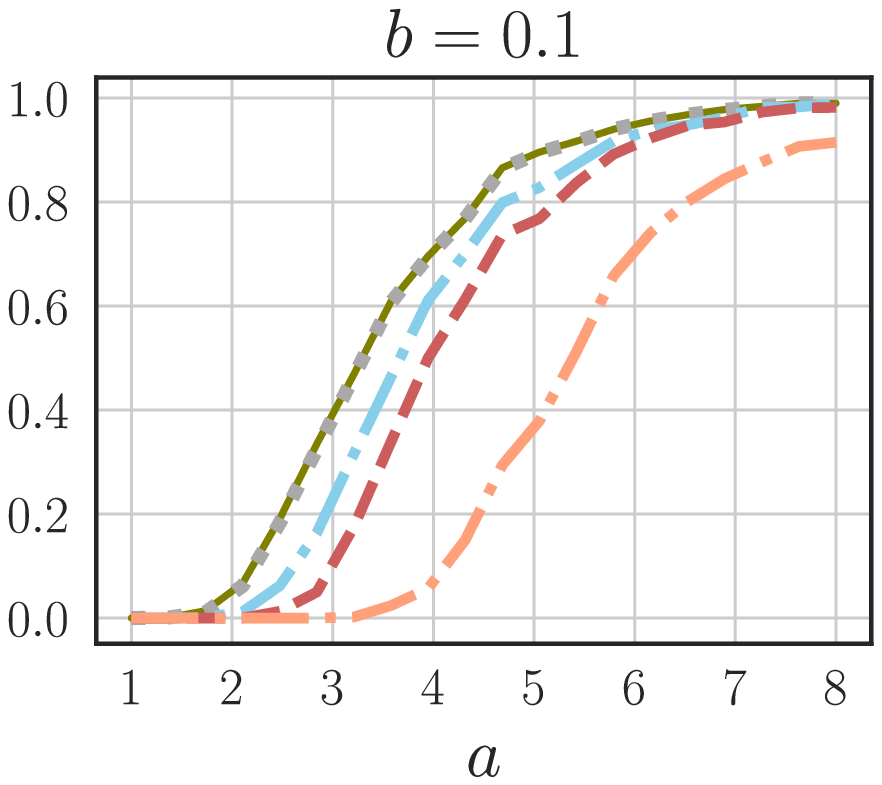}
  \label{fig:sub1}
\end{subfigure}
\centering
\begin{subfigure}
  \centering
  \includegraphics[width=.31\linewidth]{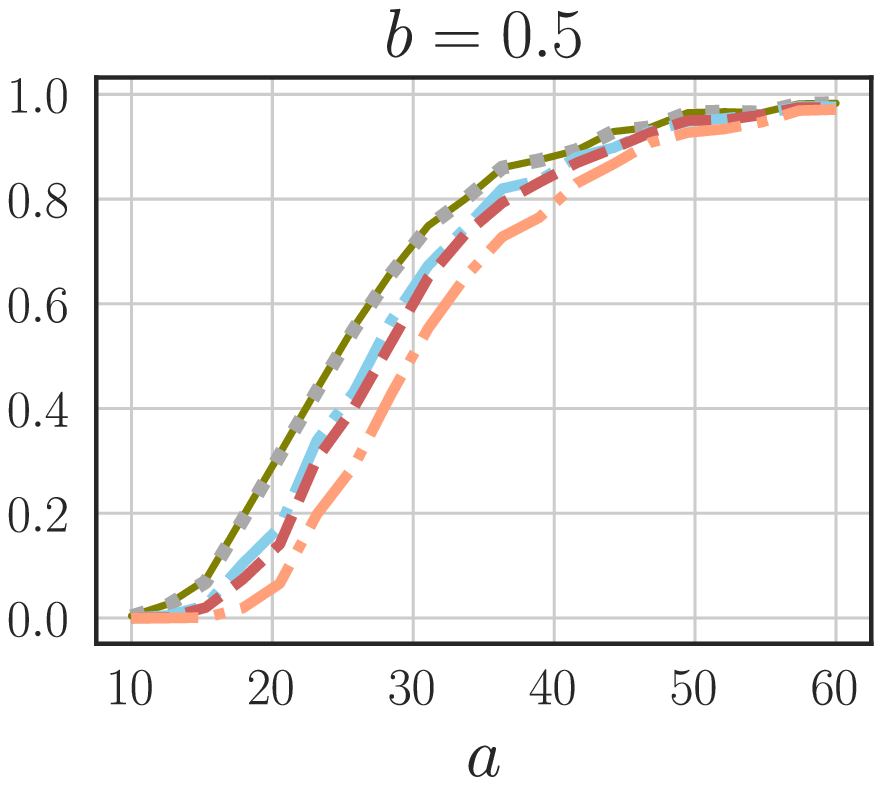}
  \label{fig:sub2}
\end{subfigure}
\centering
\begin{subfigure}
  \centering
  \includegraphics[width=.31\linewidth]{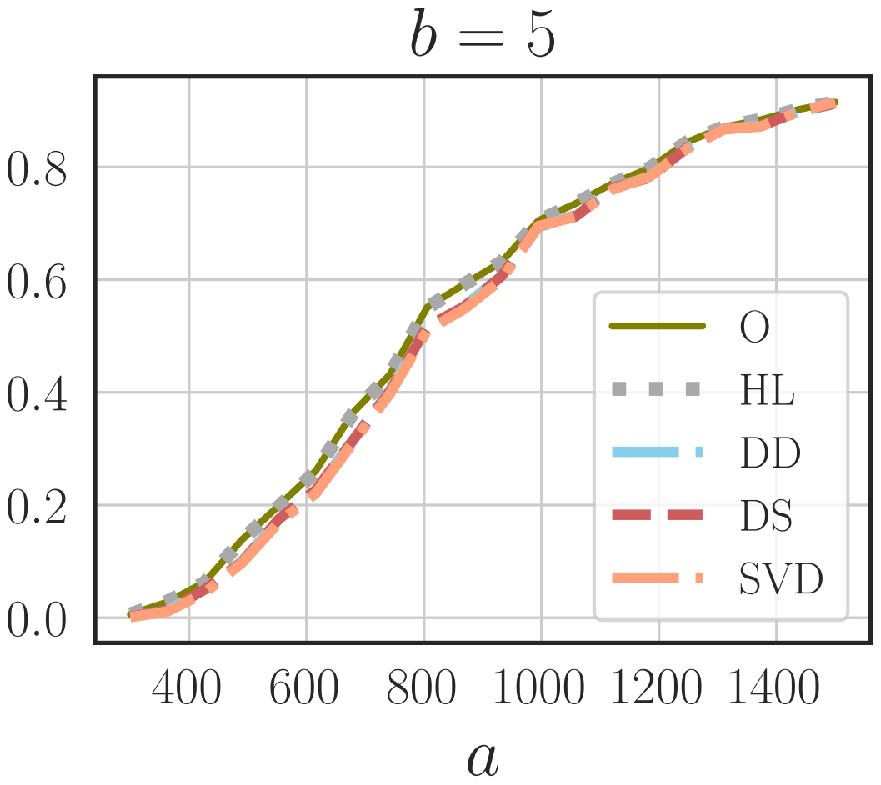}
  \label{fig:sub3}
\end{subfigure}

\caption{Empirical probability of success over 1000 runs of the experiment for: $b=0.1$ (left), $b=0.5$ (center) and $b=5$ (right).}
\label{fig:comparison}
\end{figure}
Overall, numerical experiments match our theoretical findings  and provide some interesting insights: 
\begin{enumerate}
    \item Hollowed Lloyd's algorithm with spectral initialization achieves a performance remarkably close to the oracle without any prior knowledge about the true labels. Notice that this holds also when only a fraction of labels can be recovered, i.e. when the probability of wrong recovery is not exactly zero. This, in particular, suggests that the theoretical comparison we established between the above algorithms can be extended beyond the problem of exact recovery. Further simulations  show that randomly initialized hollowed Lloyd's algorithm achieves the same performace as well (we omit these simulations since such an algorithm is not covered by our theory).
    \item In the case $b=0.1$ (high dimension), we recover empirically the diagram of Section \ref{sec:diagram}. Observe that as $b$ gets larger (moderate and small dimension) all the estimators converge to almost indistinguishable performance. {In other words,  the ranking of estimators given in Section \ref{sec:diagram}  only accentuates for the high-dimensional regime. This agrees with the fact that the conclusions of Section \ref{sec:diagram} are restricted to the zone $n_2 \geq n_1(\log n_1)^4$.}
    \item In high dimensions, the DD method outperforms the DS,  which supports the argument that, under the BSBM model, hollowing is more beneficial than debiasing (cf. Proposition \ref{lem:suboptimal} and the corresponding discussion). 
\end{enumerate}

 \section*{Acknowledgements}
This work was supported by GENES and by the French National Research Agency (ANR) under the grant Labex Ecodec (ANR-11-LABEX-0047). M. Ndaoud was partially supported by a James H. Zumberge Faculty Research and Innovation Fund at the University of Southern California and by the National Science Foundation grant CCF-1908905.

\bibliographystyle{imsart-nameyear}
%\bibliography{bibli}

%\newpage
\appendix
%\newpage
\section{Control of the spectral norm of the hollowed Gram matrix}

This section is devoted to the control of the spectral norm of the hollowed matrix $H(WW^{\top})=\sum_{j=1}^{n_2} H(W_jW_j^{\top})$,
where we denote by $W_j$ the columns of $W$.
The following theorem will be used in the proofs.
\begin{thm}[Matrix Bernstein inequality -- adapted from  \cite{tropp2012user}, Theorem 6.2]
\label{th:th4}
Let $(Y_j)_{j=1}^{n}$ be a sequence of independent symmetric random matrices of size $d \times d$, and $a,R>0$. Assume that for all $j$ in $\{ 1,\dots,n\}$ we have
\[
\mathbb{E} (Y_{j})=0 \text { and }\|\mathbb{E}(Y_j^q)\|_{\infty}  \le \frac{q!}{2}R^{q-2}a^2 \text { for $q=2,3,\dots$ } .
\]
Then, for all $t \geq 0$, 
\begin{align*}
\mathbb{P}\left( \Big\| \sum_{j=1}^{n} Y_{j}\Big\|_{\infty} \geq t\right) \leq d \, \exp \left(-\frac{t^{2}}{2 \sigma^{2}+2 R t}\right) \text { with } \sigma^{2}=na^2.% \left\|\sum_{j=1}^{n} A_j^{2}\right\|_{\infty}.
\end{align*}
\end{thm}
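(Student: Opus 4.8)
The plan is to run the matrix Laplace transform argument of Ahlswede–Winter in Tropp's refined form. First I would reduce to a one-sided statement: since $\bigl\|\sum_j Y_j\bigr\|_\infty=\lambda_{\max}\bigl(\sum_j Y_j\bigr)\vee\lambda_{\max}\bigl(-\sum_j Y_j\bigr)$, and the matrices $-Y_j$ satisfy exactly the same hypotheses (because $\bigl\|\mathbb{E}((-Y_j)^q)\bigr\|_\infty=\bigl\|\mathbb{E}(Y_j^q)\bigr\|_\infty$), it is enough to bound $\mathbb{P}\bigl(\lambda_{\max}(\sum_j Y_j)\ge t\bigr)$ and then take a union bound. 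For that, for any $\theta>0$ I would use the trace-exponential Markov bound
\[
\mathbb{P}\Bigl(\lambda_{\max}\Bigl(\textstyle\sum_j Y_j\Bigr)\ge t\Bigr)\le e^{-\theta t}\,\mathbb{E}\,\Tr\exp\Bigl(\theta\textstyle\sum_j Y_j\Bigr),
\]
which follows from $e^{\theta\lambda_{\max}(M)}\le\Tr e^{\theta M}$ for symmetric $M$.

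The core of the argument is the estimate of the matrix cumulant generating function. By the subadditivity of the matrix cgf — a consequence of Lieb's concavity theorem — together with independence,
\[
\mathbb{E}\,\Tr\exp\Bigl(\theta\textstyle\sum_j Y_j\Bigr)\le\Tr\exp\Bigl(\textstyle\sum_j\log\mathbb{E}\,e^{\theta Y_j}\Bigr).
\]
Then, using $\mathbb{E}(Y_j)=0$ and the Bernstein moment hypothesis, for $0<\theta<1/R$,
\[
\Bigl\|\mathbb{E}\,e^{\theta Y_j}-\mathbf{I}_d\Bigr\|_\infty=\Bigl\|\sum_{q\ge2}\frac{\theta^q}{q!}\mathbb{E}(Y_j^q)\Bigr\|_\infty\le\frac{a^2\theta^2}{2}\sum_{q\ge0}(\theta R)^q=\frac{a^2\theta^2}{2(1-\theta R)}=:g(\theta),
\]
so $\mathbb{E}\,e^{\theta Y_j}\preceq(1+g(\theta))\mathbf{I}_d\preceq e^{g(\theta)}\mathbf{I}_d$, and by operator monotonicity of the logarithm $\log\mathbb{E}\,e^{\theta Y_j}\preceq g(\theta)\mathbf{I}_d$. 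Summing over $j$ and recalling $\sigma^2=na^2$ yields $\Tr\exp\bigl(\sum_j\log\mathbb{E}\,e^{\theta Y_j}\bigr)\le d\,e^{\sigma^2\theta^2/(2(1-\theta R))}$.

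Combining the three displays, $\mathbb{P}\bigl(\lambda_{\max}(\sum_j Y_j)\ge t\bigr)\le d\exp\bigl(-\theta t+\tfrac{\sigma^2\theta^2}{2(1-\theta R)}\bigr)$ for every $\theta\in(0,1/R)$. I would then plug in the standard near-optimal choice $\theta=t/(\sigma^2+Rt)$, which lies in $(0,1/R)$ and makes $1-\theta R=\sigma^2/(\sigma^2+Rt)$; a one-line computation collapses the exponent to $-t^2/(2\sigma^2+2Rt)$. Applying the same bound to $-\sum_j Y_j$ and taking a union bound gives the two-sided statement, up to a factor $2$ that is harmless and, in the form quoted from \cite{tropp2012user}, absorbed into the leading constant.

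There is no genuine obstacle here: the result is a routine specialization of Tropp's machinery, and the only ingredient that is not elementary — Lieb's concavity theorem behind the cgf subadditivity step — can simply be invoked through Tropp's master tail bound. Consequently the cleanest write-up is to cite that master bound and supply only the geometric-series estimate $\log\mathbb{E}\,e^{\theta Y_j}\preceq g(\theta)\mathbf{I}_d$, which is the single place where the Bernstein-type moment hypothesis (rather than an almost-sure bound on $\|Y_j\|_\infty$) is used; the only points needing a little care there are the convergence of the series for $\theta<1/R$ and the passage from the operator-norm bound on $\mathbb{E}\,e^{\theta Y_j}-\mathbf{I}_d$ to the positive semidefinite ordering.
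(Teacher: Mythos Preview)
Your proposal is correct and follows precisely the standard matrix Laplace transform argument from Tropp's framework: the trace-exponential Markov bound, the cgf subadditivity via Lieb's theorem, the geometric-series control of $\mathbb{E}\,e^{\theta Y_j}$ under the Bernstein moment hypothesis, and the choice $\theta=t/(\sigma^2+Rt)$. The paper does not give its own proof of this statement at all; it simply records it as an adaptation of Theorem~6.2 in \cite{tropp2012user} and uses it as a black box, so there is no alternative argument to compare against. Your only loose end is the factor $2$ from the union bound over $\pm\sum_j Y_j$ versus the constant $d$ stated in the theorem; this is a well-known cosmetic discrepancy in how Tropp's bounds are quoted and does not affect any downstream use in the paper.
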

We will show that in our case this theorem can be applied with $Y_{j}=H(W_jW_j^{\top})$, $d=n_1$,  $n=n_2$, {$R = 3(1 + 2n_1p)$} and {$a^2 = 4 p^2n_1$}. One can check that it gives a strict improvement over the matrix Hoeffding type inequality that uses only the fact that $\|H(W_jW_j^\top) \|_{\infty} \leq n_1$ almost surely. Namely, we have the following theorem.
\begin{thm}\label{thm:new_bernstein}
For all $t \geq 0$, 
\begin{align*}
\mathbb{P}\left( \Big\| \sum_{j=1}^{n_2} H(W_jW_j^{\top})\Big\|_{\infty} \geq t\right) \leq n_1 \, \exp \left(-\frac{t^{2}}{8 n_1n_2p^2+6 (1+2n_1p) t}\right).
\end{align*}
\end{thm}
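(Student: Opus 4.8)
The plan is to apply the Bernstein-type inequality of Theorem~\ref{th:th4} to the independent symmetric random matrices $Y_j = H(W_jW_j^\top)$, $j=1,\dots,n_2$, with $d=n_1$ and $n=n_2$. The first task is to verify the moment hypothesis: we must exhibit $a,R>0$ such that $\mathbb{E}(Y_j)=0$ and $\|\mathbb{E}(Y_j^q)\|_\infty \le \tfrac{q!}{2}R^{q-2}a^2$ for all $q\ge2$. The claim is that $a^2 = 4p^2 n_1$ and $R = 3(1+2n_1p)$ work; once that is in hand, the conclusion of Theorem~\ref{th:th4} with $\sigma^2 = n_2 a^2 = 4n_1n_2p^2$ and the stated $R$ gives exactly the tail bound $n_1\exp\!\big(-t^2/(8n_1n_2p^2 + 6(1+2n_1p)t)\big)$, since $2\sigma^2 = 8n_1n_2p^2$ and $2R = 6(1+2n_1p)$.

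The centering $\mathbb{E}(Y_j)=0$ is immediate: $W_j$ has independent, mean-zero, bounded coordinates, so $\mathbb{E}(W_jW_j^\top)$ is diagonal (its $(k,k)$ entry is $\mathrm{Var}(W_{kj})$ and its off-diagonal entries vanish), hence $H(\mathbb{E}(W_jW_j^\top))=0$ and by linearity $\mathbb{E}(H(W_jW_j^\top))=0$. The substantive step is the moment bound on $\mathbb{E}(Y_j^q)$ in operator norm. I would expand $Y_j^q = \big(H(W_jW_j^\top)\big)^q$ into a sum over index tuples $(k_0,k_1,\dots,k_q)$ with the hollowing constraint $k_\ell \neq k_{\ell+1}$ for consecutive indices, writing the $(k_0,k_q)$ entry of $Y_j^q$ as a sum of products $W_{k_0 j}W_{k_1 j}\cdots$ — in fact each entry of $W_jW_j^\top$ is a product of two coordinates of $W_j$, so $(Y_j^q)_{k_0 k_q} = \sum W_{k_0 j}W_{k_1 j}^2\cdots W_{k_{q-1} j}^2 W_{k_q j}$ along admissible chains. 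Taking expectations kills any term with an odd power, so after using $\mathbb{E}(W_{kj}^2)\le p$ and $|W_{kj}|\le1$ I would bound $\mathbb{E}(Y_j^q)_{k_0 k_q}$ entrywise by a quantity controlled by the number of admissible chains of length $q$ through $n_1$ indices, which is of order $(n_1 p)^{\text{(number of squared factors)}}$ with an $n_1$ correction for the free endpoint index. Converting the resulting entrywise bound into an operator-norm bound (e.g. via $\|M\|_\infty \le \big(\sum_k \max_{k'}|M_{kk'}| \cdot \text{something}\big)$, or more cleanly by bounding $\|M\|_\infty$ through the number of nonzero structured entries) yields the geometric-in-$q$ estimate $\|\mathbb{E}(Y_j^q)\|_\infty \le C (n_1p)(1+n_1p)^{q-2}n_1 p \cdot (\cdots)$; the constants $4$ and $3$ in $a^2$ and $R$ come from organizing this combinatorics carefully, separating the $q=2$ base case (where $\|\mathbb{E}(Y_j^2)\|_\infty \le a^2 = 4n_1p^2$ must be checked directly) from the recursion for $q\ge3$.

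I expect the main obstacle to be precisely this combinatorial moment estimate: one must count chains $k_0\to k_1\to\cdots\to k_q$ with the no-immediate-repeat constraint, track which coordinates appear to an odd versus even power (only the all-even terms survive expectation), and then turn an entrywise bound into a spectral-norm bound without losing a spurious factor of $n_1$ that would destroy the advertised $R$ and $a^2$. A convenient route is to note that $\mathbb{E}(Y_j^q)$ is itself a symmetric matrix whose entries are nonnegative after the parity reduction, so its operator norm is at most its largest row sum (Perron–Frobenius / Gershgorin), and the largest row sum is a clean sum over admissible chains that factorizes recursively — the number of ways to extend a chain by two steps while returning is $\le n_1 p^2 \cdot(\text{const})$ per pair of steps once the $\mathbb{E}(W^2)\le p$ bounds are inserted. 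Matching this recursion to the form $\tfrac{q!}{2}R^{q-2}a^2$ (the $q!$ is generous and only helps) then pins down admissible choices of $a^2$ and $R$, and plugging $a^2=4n_1p^2$, $R=3(1+2n_1p)$, $\sigma^2 = n_2a^2$ into Theorem~\ref{th:th4} completes the proof. The comparison with the naive Hoeffding bound (which would only use $\|Y_j\|_\infty\le n_1$ a.s., giving variance proxy $n_1n_2 \cdot n_1$ rather than $n_1n_2p^2$) is then a one-line remark showing the improvement whenever $p$ is small.
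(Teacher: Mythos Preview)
Your high-level plan is exactly the paper's: apply Theorem~\ref{th:th4} with $Y_j=H(W_jW_j^\top)$, check $\mathbb{E}(Y_j)=0$, and reduce everything to the moment estimate $\|\mathbb{E}(Y_j^q)\|_\infty \le \tfrac{q!}{2}R^{q-2}a^2$ with $a^2=4n_1p^2$, $R=3(1+2n_1p)$. Your idea to pass from entrywise bounds to the spectral norm via the maximal row sum is also what the paper (implicitly) does: it bounds the diagonal entries of $\mathbb{E}(Y_j^q)$ by $4q!(1+2pn_1)^{q-2}p^2n_1$ and the off-diagonal entries by $2q!(1+2pn_1)^{q-2}p^2$, then combines them.

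Where your sketch diverges and becomes imprecise is in the combinatorics. First, the statement ``taking expectations kills any term with an odd power'' is not right: the $w_k$ are centered Bernoulli, so $\mathbb{E}(w_k^m)\neq 0$ for all $m\ge 2$, odd or even. What vanishes are terms in which some index appears \emph{exactly once}. The paper uses the uniform bound $|\mathbb{E}(w_k^m)|\le 2p$ for every $m\ge 2$ (not just $m=2$, and not $\le p$), and this is what makes the counting work: a term with $s$ distinct index values contributes at most $(2p)^s$. Second, the paper does not use a two-step recursion on chain length; instead it writes the $(i,k)$ entry of $Y_j^q$ as a sum over chains $(i_2,\dots,i_q)$ with $i_\ell\neq i_{\ell+1}$, then \emph{partitions by the number $s$ of distinct values} among the $i_\ell$. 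For off-diagonal entries both $i$ and $k$ must occur among the $i_\ell$ (else a singleton kills the expectation), so one counts partitions with two marked blocks; for diagonal entries only $i$ must occur. The cardinality bounds $\binom{q-3}{s-2}s^{q-s}$ and $\binom{q-2}{s-1}s^{q-s}$, together with $s^{q-s}\le q!/2$, then sum to the advertised $(1+2pn_1)^{q-2}$ factor.

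So the skeleton is right, but your proposed route to the moment bound (parity plus recursion) is not the one that actually closes the argument, and as written it carries a real risk of either losing the crucial $p^2$ in $a^2$ or picking up an extra $n_1$. Replace it with the partition-by-distinct-values count and you have the paper's proof.
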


\begin{proof}
Fix $j$ in $\{ 1,\dots,n_2\}$. In view of Theorem \ref{th:th4}, it is enough to show that for all integers $q \geq 2$ we have
\begin{equation}\label{eq:moment_b}
 \left\| \mathbb{E}(H(W_jW_j^\top)^q)\right\|_{\infty} \leq 2q!(3(1+2n_1p))^{q-2}p^2n_1.
\end{equation}
We now prove \eqref{eq:moment_b}. To alleviate the notation, we set $w=W_j$ and we denote by $w_k$ the entries of $w$. Note that $w_k$ are independent random variables taking value $1-{\sf p}$ w.p. ${\sf p}$ and $-{\sf p}$ w.p. $1-{\sf p}$, where ${\sf p}$ is either $\delta p$ or $(2-\delta)p$. We have $\mathbb{E}(w_k)=0$ for all $k$. 
Furthermore, for any integer $m\ge 2$,
\begin{equation}\label{eq:w}
 |\mathbb{E}(w_k^m)|\le 2p.
\end{equation}
Indeed,
$$
 |\mathbb{E}(w_k^m)|\le {\sf p}(1-{\sf p}) \max_{0\le {\sf p}\le 1}((1-{\sf p})^{m-1}+ {\sf p}^{m-1})={\sf p}(1-{\sf p}).
$$
Denote by $h_{ik}(q)$ the $(i,k)$th entry of matrix $H(ww^\top)^q$. 
Note that the $(i,k)$th entry of matrix $H(ww^\top)$ is $H(ww^\top)_{ik} = w_iw_k\mathds{1}(i \neq k)$.  It comes out that 
\[
h_{ik}(q) =\sum_{(i_2, i_3,\dots , i_{q})\in J } w_iw_k \prod_{\ell=2}^{q}w_{i_{\ell}}^2,
\]
where $J=\{(i_2, i_3,\dots , i_{q}):  i_2\ne i_3, \dots, i_{q-1}\ne i_q; i_2\ne i, i_{q} \ne k \}$ and indices $i_\ell$ take values in $\{1,\dots,n_1\}$. Thus,
\begin{equation}\label{eq:hik}
|\mathbb{E}(h_{ik}(q))| \leq  \sum_{(i_2, i_3,\dots , i_{q})\in J} \bigg|\mathbb{E}\Big(w_iw_k\prod_{\ell=2}^{q}w_{i_{\ell}}^2\Big)\bigg|.
\end{equation}
First note that for $q=2$ the terms in this sum are non-zero only if $i=k$ and in this case the sum is bounded by $4p^2n_1$. Thus, \eqref{eq:moment_b} holds for $q=2$. In order to prove \eqref{eq:moment_b} for $q\geq 3$, it suffices to show that for all $i,k$  we have
\begin{equation}\label{eq:moment_b1}
|\mathbb{E}(h_{ik}(q)) |\leq  2q!(1+2pn_1)^{q-2}p^2, \quad i \neq k,
\end{equation}
and 
\begin{equation}\label{eq:moment_b2}
|\mathbb{E}(h_{ii}(q))| \leq  4q!(1+2pn_1)^{q-2}p^2n_1.
\end{equation}

We start by showing \eqref{eq:moment_b1} for all $q \geq 3$. Let $i\ne k$. We first bound the number of non-zero terms in the sum in \eqref{eq:hik}. Since $w_1,\dots,w_{n_1}$ are independent zero-mean random variables, the term in this sum corresponding to some fixed $(i_2, i_3,\dots , i_{q})$ can be non-zero only if both $i$ and $k$ belong to the set $\{i_2, i_3,\dots , i_{q}\}$.  In order to take into account equalities between different indices $i_\ell$, consider all partitions $\pi$ of the set $\{i_2, i_3,\dots , i_{q}\}$ into $s$ subsets, with equal indices in each subset, where $s$ runs from 2 to $q-1$ (the case $s=1$, that is $i_2= i_3 =\cdots = i_{q}$,  is excluded since the corresponding expectation vanishes). 

Assume a partition $\pi$ in $s$ subsets fixed. Then, for the expectation
$$
\mathbb{E}\bigg(w_iw_k\prod_{\ell=2}^{q}w_{i_{\ell}}^2\bigg)
$$
 to be non-zero, two out of $s$ subsets must contain variables with indices $i$ and $k$, and in this case due to independence of $w_m$ and \eqref{eq:w} we have
 \begin{equation}\label{eq:ps}
 \bigg|\mathbb{E}\Big(w_iw_k\prod_{\ell=2}^{q}w_{i_{\ell}}^2\Big)\bigg|\le (2p)^s.
\end{equation}
Denote by $\mathcal{P}_{s,2}$ the set of all partitions $\pi$ of $\{i_2, i_3,\dots , i_{q}\}$ into $s$ subsets such that for two of these subsets the indices $i_\ell$ are equal to $i$ and $k$. To get an upper bound on the cardinality of $\mathcal{P}_{s,2}$, notice that any such partition can be obtained by choosing $s-2$ distinct indices among the $q-3$ possible values (other than $i$ and $k$) and then allocating the remaining $q-s$ indices to $s$ buckets. 
This leads to the bound ${\rm Card}(\mathcal{P}_{s,2}) \le \binom{q-3}{s-2}s^{q-s}$.  Denote by $i_1(\pi)\ne \cdots \ne i_{s-2}(\pi)$ the $s-2$ distinct indices (other than $i$ and $k$) corresponding to the 
 partition $\pi\in \mathcal{P}_{s,2}$.  Using \eqref{eq:ps} and the fact that the indices $i_\ell(\pi)$ can take values from 1 to $n_1$ we obtain
\begin{align*}
|\mathbb{E}(h_{ik}(q))| &\leq \sum_{s=2}^{q-1} \sum_{\pi\in \mathcal{P}_{s,2}} \,\, \sum_{i_1(\pi) \neq  \cdots \neq i_{s-2}(\pi)} (2p)^s 
\\
&\leq \sum_{s=2}^{q-1}\binom{q-3}{s-2}s^{q-s} n_1^{s-2} (2p)^s\\
&\leq 2p^2q!\sum_{s=2}^{q-1}\binom{q-2}{s-2} (2pn_1)^{s-2}\\
&\leq 2q!(1+2pn_1)^{q-2}p^2,
\end{align*}
where we have used the inequalities $s^{q-s} \le (q-1)!/(s-1)!\le q!/2$. Thus, the bound \eqref{eq:moment_b1} is proved for all $q\ge 3$.

It remains to show that \eqref{eq:moment_b2} holds for $q\ge 3$. Denote by $\mathcal{P}_{s,1}$ the set of all partitions $\pi$ of $\{i_2, i_3,\dots , i_{q}\}$ into $s$ subsets such that for one of these subsets the index $i_\ell$ is equal to $i$. Similarly to the argument for $\mathcal{P}_{s,2}$, we obtain that
${\rm Card}(\mathcal{P}_{s,1}) \le \binom{q-2}{s-1}s^{q-s}$ and 
\begin{align*}
|\mathbb{E}(h_{ik}(q))| &\leq \sum_{s=2}^{q-1} \sum_{\pi\in \mathcal{P}_{s,1}} \,\, \sum_{i_1(\pi) \neq  \cdots \neq i_{s-1}(\pi)} (2p)^s 
\\
&\leq \sum_{s=2}^{q-1}\binom{q-2}{s-1}s^{q-s} n_1^{s-1} (2p)^s\\
&\leq \sum_{s=2}^{q-1}\binom{q-2}{s-2} \frac{q-s}{s-1} \frac{(q-1)!}{(s-1)!} n_1^{s-1} (2p)^s\\
&\leq 4p^2n_1 q!\sum_{s=2}^{q-1}\binom{q-2}{s-2} (2pn_1)^{s-2}\\
&\leq 4q!(1+2pn_1)^{q-2}p^2n_1.
\end{align*}
%Hence we conclude that, for all $t \geq 0$, 
%\begin{align*}
%\mathbb{P}\left( \left\| \sum_{j=1}^{n_2} H(W_jW_j^{\top})\right\|_{\infty} \geq t\right) \leq n_1 \, \exp \left(-\frac{t^{2}}{8 n_1n_2p^2+6 (1+2n_1p) t}\right).
%\end{align*}
\end{proof}
%\simo{
 \secondlemma*
\begin{proof}
Introduce the notation ${\sf H}=\|H(WW^\top)\|^2_{\infty}$, $t_1=6\sqrt{n_1n_2p^2\log n_1}$, $t_2= 48(1+2n_1p)\log{n_1}$ and $t_3=t_1 \vee t_2$. Using Theorem \ref{thm:new_bernstein} {and the facts that $\exp(-a/(b+c))\le \exp(-a/(2b)) + \exp(-a/(2c))$ for all $a,b,c>0$,} we get  
\begin{align*}
\mathbb{E}\left({\sf H}^2\right) &=2\int_0^{\infty} {\mathbb P}({\sf H}>t)\,t dt \le t_3^2 + 2 n_1\int_{t_3}^{\infty}\exp \left(-\frac{t^{2}}{{16} n_1n_2p^2}\right) t dt 
\\
&\quad +  2 n_1\int_{t_3}^{\infty}\exp \left(-\frac{t}{{12}(1+ 2n_1 p)}\right) t dt.
\end{align*}
Since $n_1n_2 p^{2}\geq C\log{n_1}$ it comes out that, for some constants $c_1, c_2, c_3$ we have
\begin{align*}
\mathbb{E}\left({\sf H}^2\right) &\leq c_1 t_3^2\\
&\leq c_2 (n_1n_2p^2\log{n_1} + n_1^2p^2\log^2{n_1})\\
&\leq c_3 n_1n_2p^2\log{n_1}(1 + n_1\log{n_1}/n_2).
\end{align*}
%{\color{blue}Sasha: in this theorem it is enough to assume $n_1n_2 p^{2}\geq C\log{n_1}$.} \simo{True but it is not a problem I think.}
\end{proof}
%}
 \firstlemma*
%Let $n_1,n_2 \geq 2$ such that $n_2\geq n_1\log{n_1}$. Assume that $\gamma_1\gamma_2<1$ and $ n_1^{-1}n_2^{-1}\log{n_1} \leq p^2 \leq (n_1\log{n_1})^{-2}/4115^2$. Then
%\[
%\mathbb{E}\left(\|WW^{\top} - \mathbb{E}(WW^{\top})\|^2_{\infty} \right) \geq \frac{n_2 p }{20}.
%\]

 \begin{proof}
 Set ${\sf p}=\max(\delta p, (2-\delta)p)\ge p$. Since $\gamma_1<1$ and $\gamma_2<1$ then at least one row of $W$ has not less than $n_2/2$ entries that are centered Bernoulli variables with parameter ${\sf p}$. Without loss of generality, let it be the first row of $W$. We denote this first row by $X_1$. 
We have
\begin{align*}
\|WW^{\top} - \mathbb{E}(WW^{\top})\|_{\infty} &\geq \| WW^{\top} - \mathbb{E}(WW^{\top})\|_{\infty} - \|H(WW^{\top})\|_{\infty}\\
& \geq |\|X_{1}\|^2 - \mathbb{E}(\|X_{1}\|^2)|  - \|H(WW^{\top})\|_{\infty},
\end{align*}
so that
\[
\mathbb{E}\left(\|WW^{\top} - \mathbb{E}(WW^{\top})\|^2_{\infty} \right) \geq \frac{1}{2}\mathbb{E}\left(( \|X_{1}\|^2 - \mathbb{E}(\|X_{1}\|^2))^2\right) - \mathbb{E}\left(\|H(WW^\top)\|^2_{\infty}\right).
\]
Denoting by $\eta$ the centered Bernoulli variable with parameter ${\sf p}$ ($\eta$ takes value $1-{\sf p}$ with probability ${\sf p}$ and value $-{\sf p}$ with probability $1-{\sf p}$) we get
\begin{align*}
\mathbb{E}\left(( \|X_{1}\|^2 - \mathbb{E}(\|X_{1}\|^2))^2\right) & \ge \frac{n_2}{2} {\rm Var}(\eta^2)  
\\
&= \frac{n_2}{2} {\sf p}(1-{\sf p})(1-2{\sf p})^2 \geq \frac{9n_2p}{20},
\end{align*}
where we have used the inequalities $p\le {\sf p} \leq 2p \le 1/70$. 

Next, note that $2n_1p\le 1$ and introduce again the notation ${\sf H}=\|H(WW^\top)\|^2_{\infty}$, $t_1=4\sqrt{n_1n_2p^2\log n_1}$, $t_2= 4n_1n_2p^2/ (3(1+2n_1 p))>t_1$. From Theorem \ref{thm:new_bernstein} and the facts that $t_2\ge (2/3)n_1n_2p^2\ge (2c/3)\log{n_1} $ with $c=18^2$, and $n_1\ge2$ we get 
\begin{align*}
\mathbb{E}\left({\sf H}^2\right) &=2\int_0^{\infty} {\mathbb P}({\sf H}>t)\,t dt \le t_1^2 + 2 n_1\int_{t_1}^{t_2}\exp \left(-\frac{t^{2}}{16 n_1n_2p^2}\right) t dt 
\\
&\quad +  2 n_1\int_{t_2}^{\infty}\exp \left(-\frac{t}{12(1+ 2n_1 p)}\right) t dt
\\
&\leq 16n_1n_2p^2\log{n_1}+ 16n_1n_2p^2 + 2 n_1\int_{t_2}^{\infty}\exp \Big(-\frac{t}{24}\Big) t dt
%\\
%&
%= 16n_1n_2p^2(\log{n_1}+1) +  2 n_1(24 t_2 +(24)^2)\exp(-{t_2}/{24})
\\
&
\le 16n_1n_2p^2(\log{n_1}+1) +  2 n_1(32 n_1n_2p^2 +(24)^2)\exp(-c(\log n_1)/{36})
%\\
%&
%= 16n_1n_2p^2(\log{n_1}+1) +  2 n_1^{-9}(32 n_1n_2p^2 +(24)^2)
\\
&
\le 16n_1n_2p^2(\log{n_1}+1) +  2^{-3} n_1n_2p^2 +(3/2)^2
\\
&\leq n_1n_2p^2\log{n_1}\Big(16 + \frac{17}{\log{n_1}}\Big)
\\
& \leq \frac{n_2p}{5},
\end{align*}
where we have used the condition on $p$. Combining the above displays we get the proposition.
\end{proof}

\vspace{3mm}
 
\section{Lower bound on the oracle}

We define the oracle as follows 
\[
\tilde\eta_1 = \sign \left( H((A-p\mathds{1}_{n_1}\mathds{1}_{n_2}^\top)(A-p\mathds{1}_{n_1}\mathds{1}_{n_2}^\top)^\top)\eta_1\right).
\]
\firstprop*
%\begin{prop} Assume that $n_2 \geq n_1\log{n_1}$ and $n_{1+} = n_{1-}$, $n_{2+} = n_{2-}$. There exists $c_\delta>0$ depending only on %$\delta$ such that if $p^2 = c_\delta\frac{\log{n_1}}{n_1n_2}$ then
%\begin{align*}
%\underset{n_1 \to \infty}{\lim}  \sum_{i=1}^{n_1}\mathbb{P}(\tilde\eta_{1i} \neq \eta_{1i}) =\infty.
%\end{align*}
%\[
%\underset{n_1 \to \infty}{\lim} \mathbb{E}|\eta^{o} - \eta| = \infty.
%\]
%\end{prop}
\begin{proof}
Since $n_{1+} = n_{1-}$ and $n_{2+} = n_{2-}$ we obtain that the $i$th entry of vector $H((A-p\mathds{1}_{n_1}\mathds{1}_{n_2}^\top)(A-p\mathds{1}_{n_1}\mathds{1}_{n_2}^\top)^\top)\eta_{1} $ is equal to
$$
h_i = \sum_{j =1}^{n_2}(A_{ij}-p)\sum_{k =1}^{n_1}A_{kj}  \eta_{1k} -  \eta_{1i}\sum_{j =1}^{n_2}(A_{ij}-p)^2.
$$
For all $i$ in $\{1,\dots,n_1\}$, since $\tilde \eta_{1i} \neq \eta_{1i}$ is equivalent to $h_i\eta_{1i}<0$ we have
\begin{align*}
\mathbb{P}(\tilde \eta_{1i} \neq \eta_{1i})=\mathbb{P}\left( \sum_{k \neq i} \sum_{j =1}^{n_2} \eta_{1i}\eta_{1k}(A_{ij}-p)A_{kj}  < p \sum_{j =1}^{n_2} (p-A_{ij}) \right). 
\end{align*}
Observe that
\begin{align*}
    \sum_{k \neq i} \sum_{j =1}^{n_2}\eta_{1i}\eta_{1k}(A_{ij}-p)A_{kj} &= (1-p)\sum_{k \neq i, \ j:A_{ij}=1} \eta_{1i}\eta_{1k}A_{kj} - p\sum_{k \neq i, \ j:A_{ij}=0}\eta_{1i}\eta_{1k}A_{kj}\\
    &= -(1-p)\sum_{k \neq i:\eta_{1k}\neq \eta_{1i}} \ \sum_{j:A_{ij}=1} A_{kj}\\
    & +(1-p)\sum_{k \neq i:\eta_{1k}= \eta_{1i}} \ \sum_{j:A_{ij}=1} A_{kj} - p\sum_{k \neq i,\ j:A_{ij}=0}\eta_{1i}\eta_{1k}A_{kj}.
\end{align*}
Hence
\begin{align*}
\mathbb{P}(\tilde \eta_{1i} \neq \eta_{1i}) &= \mathbb{P}\left( \sum_{k \neq i:\eta_{1k}\neq \eta_{1i}} \ \sum_{j:A_{ij}=1}  A_{kj} > \beta  \right)
\geq \mathbb{P}\left( \alpha > \beta  \right)
\\
&\geq \mathbb{E}\big[\mathbb{P}( \alpha > \beta \big\vert A_i )\mathds{1}_{F}\big]
,
\end{align*}
where $A_i=(A_{ij})_{j=1}^{n_2}$, 
\[
\alpha = \sum_{k \neq i:\eta_{1k}\neq \eta_{1i}}  \ \sum_{j:A_{ij}=1,\eta_{2j}=\eta_{1i}} A_{kj},
\]
\begin{align*}
\beta
&=\sum_{k \neq i:\eta_{1k}= \eta_{1i}} \ \sum_{j:A_{ij}=1}A_{kj} +   \frac{p}{1-p}\bigg(\sum_{j =1}^{n_2} A_{ij}-\sum_{k \neq i, \ j:A_{ij}=0}\eta_{1i}\eta_{1k}A_{kj}    \bigg),
\end{align*}
and
\[
F = \Big\{ \sum_{j =1}^{n_2} A_{ij}  \leq 4n_2p \Big\}\cap\Big\{ \sum_{j: \eta_{2j} = \eta_{1i}} A_{ij} \geq \delta pn_2/4 \Big\}.
\]
Note that  $F$ is an event of large enough probability for $n_1$ large enough. Indeed, as $\mathbb{E}(A_{ij})\le 2p$ and ${\rm Var}(A_{ij})\le 2p$ we get from Chebyshev inequality that
\begin{align}\label{eq:markov}
\mathbb{P}\Big( \sum_{j =1}^{n_2} A_{ij}  > 4n_2p \Big)&\le \mathbb{P}\Big( \sum_{j =1}^{n_2} (A_{ij}-\mathbb{E}(A_{ij}))  > 2n_2p \Big)
\\
&\le \frac{1}{2p n_2 }\le \frac{1}{2\sqrt{c_\delta} \log n_1},\nonumber
\end{align} 
where we have used the fact that $n_2\ge n_1\log n_1$.
Similarly, using Chebyshev inequality and the facts that for any $i$ we have ${\rm Card}\{j: \eta_{2j} = \eta_{1i}\}= n_2/2$ and that $\mathbb{E}(A_{ij})= \delta p$ for $\eta_{2j}=\eta_{1i}$ we find
\begin{align}\label{eq:cheb}
\mathbb{P}\Big( \sum_{j: \eta_{2j} = \eta_{1i}} A_{ij} < \delta pn_2/4 \Big) &\le \mathbb{P}\Big(\sum_{j: \eta_{2j} = \eta_{1i}} (\delta p - A_{ij} )> \delta pn_2/4 \Big) \\
& \le \frac{8}{\delta pn_2} \le  \frac{8}{\delta\sqrt{c_\delta} \log n_1}.\nonumber
\end{align} 
It follows from \eqref{eq:markov} and \eqref{eq:cheb} that
\begin{align}\label{eq:fff} 
\mathbb{P}(F)\ge 1 - \frac{1}{\sqrt{c_\delta} \log n_1}\Big(\frac12 + \frac{8}{\delta}\Big).
\end{align} 
Next, from Chebyshev inequality and the facts that $\mathbb{E}(A_{kj})\le 2p$, ${\rm Var}(A_{kj})\le 2p$, we obtain, conditionally on $A_i$,
\begin{align*}
\mathbb{P}\Big(\Big|\sum_{k \neq i, \ j:A_{ij}=0}\eta_{1i}\eta_{1k}A_{kj} \Big| \geq  4n_2n_1p \big| \,A_i\Big) 
& \leq \frac{1}{2n_2n_1p }\,.
\end{align*} 
Quite similarly, as ${\rm Card}\{k: \eta_{1k} = \eta_{1i}\}= n_1/2$ and for $A_i\in F$ we have ${\rm Card}\{j: A_{ij}=1\}= \sum_{j =1}^{n_2} A_{ij} \le 4p n_2$, the following inequality holds
 \begin{align*}
\forall \, A_i\in F: \quad \mathbb{P}\left( \sum_{k \neq i : \eta_{1k} = \eta_{1i}} \  \sum_{ j:A_{ij}=1} A_{kj}  \geq  8n_2n_1p^2\Big| \, A_i\right) \leq \frac{1}{4n_2n_1p^2 }\,.
\end{align*} 
Thus, for all $n_1$ large enough to have $p \leq 1/2$ we obtain
\begin{align*}
\forall \, A_i\in F: \quad \mathbb{P}( \beta  \leq 24 c_{\delta}\log n_1 | \, A_i ) &=\mathbb{P}( \beta  \leq 24 n_1n_2p^2 | \, A_i ) 
\\
&\geq 1 - \frac{3}{4n_2n_1p^2 }= 1 - \frac{3}{4c_{\delta}\log n_1}.
\end{align*}
Observe that random variables $\alpha$ and $\beta$ are independent conditionally on $A_i$ since the sums over $(k,j)$ in their definitions are taken over disjoint sets of indices. Using this we get
\begin{align}\label{eq:alpha}
\mathbb{P}(\tilde \eta_{1i} \neq \eta_{1i}) &\geq \Big(1 - \frac{3}{4c_{\delta}\log n_1}\Big)\mathbb{E}\left[\mathbb{P}\left( \alpha \ge 24 c_{\delta}\log n_1\Big\vert A_i \right)\mathds{1}_{F}\right].
%\\
%&\geq \frac{c'}{4\sqrt{5n_1n_2p^2}}\exp\left( - 5n_1n_2p^2 \log(50/\delta) \right)\mathbb{P}(F),
\end{align}
Note that, conditionally on $A_i$, the random variable $\alpha$ has a Binomial distribution with probability parameter $(2-\delta) p$.  Moreover, if  $A_i\in F$ then the number of terms in $\alpha$ denoted by $n$  is such that $n \le 4pn_1n_2$ and $n\ge (n_1/2-1)(\delta p n_2/4)\ge \delta p n_1 n_2/12$ for $n_1\ge 6$. It follows that, for any fixed $A_i\in F$, the assumptions of  
Lemma \ref{lem:lower_bound} are satisfied with ${\sf p}=(2-\delta)p$, $t=24 c_{\delta}\log n_1$ provided that $\sqrt{n_1/\log n_1}> 288\sqrt{c_\delta}/\delta$. Therefore, for $n_1$ large enough to satisfy this condition and $c_{\delta}\log n_1\ge 1$, $n_1\ge 6$, Lemma \ref{lem:lower_bound} implies that, for any $ A_i\in F$,
\begin{align*}
\mathbb{P}( \alpha \ge 24 c_{\delta}\log n_1\vert A_i)&\geq \frac{e^{-1/6}}{\sqrt{50 \pi c_{\delta}\log n_1}}\exp\left( - 25c_{\delta}\log n_1 \log\Big(\frac{300}{\delta(2-\delta) }\Big) \right).
\end{align*}
With the choice $c_{\delta}= \Big(50 \log\Big(\frac{300}{\delta(2-\delta) }\Big)\Big)^{-1}$ this yields 
\begin{align*}
\mathbb{P}( \alpha \ge 24 c_{\delta}\log n_1 \vert A_i)&\geq \frac{e^{-1/6}}{\sqrt{50 \pi c_{\delta}n_1\log n_1}}.
\end{align*}
Combining this inequality with \eqref{eq:fff} and \eqref{eq:alpha} we get the proposition.
\end{proof}

The following lemma is used to control the lower tail of binomial variables. 

\begin{lem}%[adapted from Theorem 2 in \cite{aratia}]
\label{lem:lower_bound}\hspace{-4mm}
Let $\xi_1,\dots,\xi_n$ be i.i.d. Bernoulli random variables with parameter ${\sf p}$ and $\alpha= \sum_{i=1}^n \xi_i $. Then for all $ n{\sf p} < t < n$ we have
\begin{equation*}
\mathbb{P}\left( \alpha \geq t \right)  \geq \frac{e^{-1/6}}{\sqrt{2\pi(t+1)}}\exp\left(-(t+1)\log\Big(\frac{t+1}{n{\sf p}} \Big)\right).
\end{equation*}
\end{lem}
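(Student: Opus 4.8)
The plan is to bound $\mathbb{P}(\alpha\geq t)$ below by a single binomial atom and then estimate that atom by Stirling's formula with explicit remainder control. Put $m=\lceil t\rceil$; since $\alpha$ is integer-valued and $0<n{\sf p}<t$, we have $1\leq m\leq n$ and
\[
\mathbb{P}(\alpha\geq t)\;\geq\;\mathbb{P}(\alpha=m)\;=\;\binom{n}{m}{\sf p}^{m}(1-{\sf p})^{n-m}.
\]
First I would dispose of the boundary case $m=n$ (that is, $n-1<t<n$) separately, since Robbins' two-sided Stirling bound is not available for $(n-m)!=0!$ in the convenient form. There $\mathbb{P}(\alpha\geq t)={\sf p}^{n}$, while the right-hand side of the lemma equals $\tfrac{e^{-1/6}}{\sqrt{2\pi(t+1)}}\big(\tfrac{n{\sf p}}{t+1}\big)^{t+1}\leq \tfrac{e^{-1/6}}{\sqrt{2\pi(t+1)}}\,{\sf p}^{t+1}\leq {\sf p}^{n}$, using $t+1>n$ and ${\sf p}<1$; so the bound is immediate.

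In the main case $1\leq m\leq n-1$ I would apply Robbins' form $k!=\sqrt{2\pi k}\,(k/e)^{k}e^{\lambda_{k}}$ with $\tfrac{1}{12k+1}<\lambda_{k}<\tfrac{1}{12k}$ to $n!$, $m!$, $(n-m)!$, which gives
\[
\binom{n}{m}=e^{\lambda_{n}-\lambda_{m}-\lambda_{n-m}}\sqrt{\frac{n}{2\pi m(n-m)}}\;\frac{n^{n}}{m^{m}(n-m)^{n-m}},
\]
hence
\[
\mathbb{P}(\alpha=m)=e^{\lambda_{n}-\lambda_{m}-\lambda_{n-m}}\sqrt{\frac{n}{2\pi m(n-m)}}\Big(\frac{n{\sf p}}{m}\Big)^{m}\Big(\frac{n(1-{\sf p})}{n-m}\Big)^{n-m}.
\]
Then the factors are bounded from below one by one: since $m\geq1$ and $n-m\geq1$ we get $\lambda_{n}-\lambda_{m}-\lambda_{n-m}>-\tfrac{1}{12}-\tfrac{1}{12}=-\tfrac16$, so $e^{\lambda_{n}-\lambda_{m}-\lambda_{n-m}}\geq e^{-1/6}$; since $n\geq n-m$ and $m=\lceil t\rceil\leq t+1$ we get $\sqrt{n/(2\pi m(n-m))}\geq (2\pi m)^{-1/2}\geq (2\pi(t+1))^{-1/2}$; and since $m>n{\sf p}$ we have $n-m<n(1-{\sf p})$, whence $(n(1-{\sf p})/(n-m))^{n-m}\geq1$.

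It remains to control $\big(n{\sf p}/m\big)^{m}=\exp\!\big(-m\log(m/(n{\sf p}))\big)$. Here I would use monotonicity of $g(x)=x\log(x/(n{\sf p}))$: its derivative $g'(x)=\log(x/(n{\sf p}))+1$ is nonnegative for $x\geq n{\sf p}$, so $g$ is nondecreasing on $[n{\sf p},\infty)$; as $n{\sf p}<m\leq t+1$ this yields $m\log(m/(n{\sf p}))\leq (t+1)\log((t+1)/(n{\sf p}))$, i.e. $\big(n{\sf p}/m\big)^{m}\geq \exp\!\big(-(t+1)\log((t+1)/(n{\sf p}))\big)$. Multiplying the four lower bounds gives exactly the claimed inequality. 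The proof is short; the only points requiring care are extracting the constant $e^{-1/6}$ from the Stirling remainders (which is what dictates the separate treatment of $m=n$) and checking that each replacement of $m$ by the larger quantity $t+1$ weakens the bound in the correct direction.
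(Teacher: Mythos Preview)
Your proof is correct and follows essentially the same approach as the paper's: bound $\mathbb{P}(\alpha\geq t)$ below by the single atom $\mathbb{P}(\alpha=\lceil t\rceil)$ and estimate the binomial coefficient via Stirling with explicit error terms. Your treatment is in fact slightly more complete---you spell out the boundary case $m=n$ and the monotonicity step $m\log(m/(n{\sf p}))\le (t+1)\log((t+1)/(n{\sf p}))$ explicitly---whereas the paper stops at the intermediate bound $e^{-1/6}(2\pi k)^{-1/2}(n{\sf p}/k)^{k}$ and leaves the final comparison implicit.
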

\begin{proof} Set $k=\lceil t\rceil$. Since 
$$
\mathbb{P}\left( \sum_{i=1}^n \xi_i \geq t \right)  \geq \mathbb{P}\left( \sum_{i=1}^n \xi_i = k \right) 
$$
for $k=n$ the result is trivial. Assume that $k\le n-1$ and set $a=k/n$. Then ${\sf p}< a < 1$. By Stirling's approximation,
$$
\sqrt{2\pi n} \,(n/e)^{n}  \le n! \le \sqrt{2\pi n}\, (n/e)^{n}   e^{1/12}.
$$
Therefore,
\begin{align*}
\mathbb{P}\left( \sum_{i=1}^n \xi_i \geq t \right)  &\geq \mathbb{P}\left( \sum_{i=1}^n \xi_i = k \right)  = \frac{n! {\sf p}^k(1-{\sf p})^{n-k}}{k!(n-k)!}\\
&\ge
\frac{\sqrt{2\pi n} \,n^{n}  {\sf p}^k(1-{\sf p})^{n-k}}{e^{1/6} \sqrt{2\pi k} \,k^{k} \sqrt{2\pi (n-k)} \,(n-k)^{n-k}}\\
&\ge
\frac{ {\sf p}^k(1-{\sf p})^{n-k}}{e^{1/6} \sqrt{2\pi an} \,  \,a^{k}  \,(1-a)^{n-k}}\ge \frac{ {\sf p}^k}{e^{1/6} \sqrt{2\pi an} \,  \,a^{k} }.
\end{align*}
\end{proof}

\section{Main proofs}

\subsection{Proof of Theorem \ref{th:th1}}

Recall that 
\begin{align*}
\eta^0_1 = \sign(\hat{v}),
\end{align*}
where $\hat{v}$ is the eigenvector corresponding to the top eigenvalue of the matrix 
$$H((A-\hat{p}\mathds{1}_{n_1}\mathds{1}_{n_2}^\top )(A-\hat{p}\mathds{1}_{n_1}\mathds{1}_{n_2}^\top )^{\top})$$
with $\hat{p}= \frac{1}{n_1n_2}\mathds{1}^{\top}_{n_1}A\mathds{1}_{n_2}$.
%Assume $\gamma_1\gamma_2 \leq \sqrt{\alpha}/144$. In what follows we denote by 
Recall the notation ${\tilde A}= A - p\mathds{1}_{n_1}\mathds{1}^\top_{n_2}$. We have
\[
H((A-\hat{p}\mathds{1}_{n_1}\mathds{1}_{n_2}^\top )(A-\hat{p}\mathds{1}_{n_1}\mathds{1}_{n_2}^\top )^{\top}) = H({\tilde A}{\tilde A}^\top)+Z_4,
\]
where (cf. \eqref{eq:eq1}) 
$$
H({\tilde A}{\tilde A}^\top) = (\delta -1)^2 p^2 n_2 H(\eta_{1} \eta_{1}^\top) +H(W W^\top) + p(\delta-1)H( W\eta_{2}\eta_{1}^{\top} +  \eta_{1}\eta_{2}^{\top}W^\top) 
$$
and
\[
Z_4: =  H((A-\hat{p}\mathds{1}_{n_1}\mathds{1}_{n_2}^\top )(A-\hat{p}\mathds{1}_{n_1}\mathds{1}_{n_2}^\top )^{\top})  - H({\tilde A}{\tilde A}^{\top}).
\] 
Therefore,
\begin{align*}
H((A-\hat{p}\mathds{1}_{n_1}\mathds{1}_{n_2}^\top )(A-\hat{p}\mathds{1}_{n_1}\mathds{1}_{n_2}^\top )^{\top}) = (\delta-1)^2 p^2 n_2 \eta_1 \eta_1^\top + Z,
\end{align*}
where 
\begin{align*}
Z =  \underbrace{H(W W^\top)}_{Z_{1}} + \underbrace{p(\delta-1)H( W\eta_{2}\eta_{1}^{\top} +  \eta_{1}\eta_{2}^{\top}W^\top)}_{Z_{2}}  - \underbrace{(\delta-1)^2 p^2 n_2 I_{n_1}}_{Z_{3}} + Z_4.
\end{align*}
Notice that since $Z_3$ is a multiple of the identity matrix, $\hat{v}$ is the eigenvector corresponding to the top eigenvalue of $H' = H((A-\hat{p}\mathds{1}_{n_1}\mathds{1}_{n_2}^\top )(A-\hat{p}\mathds{1}_{n_1}\mathds{1}_{n_2}^\top )^{\top})+Z_3$.
Thus, $\hat{v}$ and $\frac{1}{\sqrt{n_1}}\eta_1$ are the eigenvectors of $\frac{1}{n_1} H'$ and $ (\delta-1)^2 p^2 n_2 \frac{ \eta_1 \eta_1^\top}{n_1}$ associated to their top eigenvalues, respectively. Since $\eta_1\eta_1^{\top}$ is rank one matrix, we get from Davis-Kahan Theorem (Theorem 4.5.5. in \cite{vershynin2018high}) that 
\begin{align*}
\min_{\nu \in \{-1, 1\}} \left \| \frac{1}{\sqrt{n_{1}}}\eta_1 - \nu \hat{v}  \right\|_2^2 \leq \frac{8 \big \| Z_1 +Z_2 +Z_4 \big \|_{\infty}^2}{(\delta-1)^{4}p^{4}n_{1}^{2}n_{2}^{2}}.
\end{align*}
This implies (see Lemma \ref{lem:surrogate} below) that %(see, e.g., Lemma $20$ in \cite{ndaoud2018sharp}) that 
\begin{align*}
    \frac{1}{n_1} r(\eta_1, \eta_1^0) \leq \frac{16}{(\delta-1)^{4}p^{4}n_{1}^{2}n_{2}^{2}} \|  Z_1 +Z_2 +Z_4 \|_{\infty}^2.
\end{align*}
Thus, in order to bound $r(\eta_1,\eta_1^0)$, it remains to control the spectral norm of $Z_1 +Z_2 +Z_4$. Namely, we will prove that 
$$
\underset{n_{1} \to \infty}{\lim} \mathbb{P}\left(\|Z_{i}\|_{\infty} \geq \frac{\sqrt{\alpha}}{12}(\delta-1)^{2}p^{2}n_{1}{n_{2}}\right) = 0, \quad i=1,2,4,
$$
which implies the theorem. 
\begin{itemize}

\item \textbf{Control of $\|Z_{1}\|_{\infty}$.}

\noindent Recall that $W$ is a random matrix with  entries that are independent and distributed as $\zeta - \mathbb{E}(\zeta)$ where $\zeta$ is a Bernoulli random variable with parameter $\delta p$ or $(2-\delta)p$. Therefore, both the expectation and the variance of each entry are bounded by $2p$. 
%Let $W_1, \dots, W_{n_2}$ be the columns of matrix $W$. 
We now apply Theorem~\ref{thm:new_bernstein} with $t=\frac{\sqrt{\alpha}}{12}(\delta-1)^{2}p^{2}n_{1}n_{2}$. This yields  
\begin{align*}
     \mathbb{P}\left( \|Z_{1}\| _{\infty} \geq  \frac{\sqrt{\alpha}}{12}(\delta-1)^{2}p^{2}n_{1}n_{2}\right) &
     \leq n_1 \exp \left[-\frac{t^{2}}{(8n_1n_2p^2+6t) + 2n_1p t}\right]\\
     & \leq  {n_1} \exp \left[-{12^{-2}\alpha (\delta-1)^4}n_1n_2p^2/17\right] \\
     &\quad + {n_{1}}\exp\left[-\sqrt{\alpha}(\delta-1)^2pn_2/{288}\right],
\end{align*}
where the last inequality uses the facts that $\exp(-a/(b+c))\le \exp(-a/(2b)) + \exp(-a/(2c))$ for all $a,b,c>0$, and $\alpha\in (0,1)$, $|\delta-1|<1$.
Recall that $p \geq C (\delta-1)^{-2}\sqrt{\frac{\log n_{1}}{n_{1}n_{2}}}$ and  $ p \geq C (\delta-1)^{-2}\frac{\log{n_1}}{n_2}$   by the assumption of the theorem.
Using these conditions and choosing $C\ge 289/\sqrt{\alpha}$ we obtain
$$
\mathbb{P}\left( \|Z_{1}\| _{\infty} \geq  \frac{\sqrt{\alpha}}{12}(\delta-1)^{2}p^{2}n_{1}n_{2}\right)   
 \le 2n_{1}^{-\frac{1}{288}}.
$$

\item \textbf{Control of $\|Z_{2}\|_{\infty}$.}

\noindent In order to control $Z_{2}$, we first observe, using the inequality $\|H(M)\|_\infty \le 2\|M\|_\infty$ valid for any matrix $M\in \mathbb{R}^{n_1\times n_1}$ (cf., e.g., Lemma 17 in \cite{ndaoud2018sharp}), that
\begin{align*}
\big \| H \left( \eta_{1}\eta_{2}^{\top}W^\top + W \eta_{2}\eta_{1}^\top \right) \big \|_{\infty} & \leq 2\big \| \eta_{1}\eta_{2}^{\top} W^\top + W \eta_{2}\eta_{1}^\top \big \|_{\infty} \\
& \leq 2 \big \| \eta_{1}\eta_{2}^{\top}W^\top \big \|_{\infty} + 2 \| W\eta_{2}\eta_{1}^\top \big \|_{\infty} \\
& \leq 4 \sqrt{n_{1}}\| W\eta_{2}\|_{2}.
\end{align*}
Hence 
\begin{equation}\label{Z2_1}
 \mathbb{E}(\|Z_{2}\|_{\infty}^{2}) \leq 16(\delta-1)^{2}p^{2}n_{1}\mathbb{E}(\|W\eta_{2}\|_{2}^{2}).   
\end{equation}
Denote by  $X_{1},\dots,X_{n_1}$ the column vectors equal to the transposed rows of matrix $W$.
 Since $\mathbb{E}(X_{i}X_{i}^{\top})$ is a diagonal matrix with positive entries bounded from above by $2p$ for all $i=1,\dots,n_{1}$, we obtain 
\begin{equation}\label{Z2_2}
\mathbb{E}(\|W\eta_{2}\|_{2}^{2}) = \eta_{2}^{\top} \mathbb{E}(W^{\top}W) \eta_{2}=  \sum_{i=1}^{n_{1}}\eta_{2}^{\top}\mathbb{E}(X_{i}X_{i}^{\top})\eta_{2} \leq 2pn_{1}n_{2}.
\end{equation}
Chebyshev's inequality combined with  \eqref{Z2_1} and \eqref{Z2_2} yields the bound
\begin{align*}
\mathbb{P}\left(\|Z_{2}\|_{\infty} \geq \frac{\sqrt{\alpha}}{12}(\delta-1)^{2}p^{2}n_{1}{n_{2}}\right) %&\leq \frac{9\cdot 2^{9}(\delta-1)^{2}p^{3}n_{1}^{2}n_{2}}{(\delta-1)^{4}\alpha p^{4}n_{1}^{2}n_{2}^{2}}
%\\ 
&\leq \frac{9\cdot 2^{9}}{\alpha (\delta-1)^{2}pn_{2}} \\
&\leq \frac{9\cdot 2^{9}}{C \alpha  \log n_{1}},
\end{align*}
where we have used the fact that
$p \geq C (\delta-1)^{-2}\frac{\log n_{1}}{n_{2}}$ by the assumptions of the theorem. \vspace{3mm}

\item \textbf{Control of $\|Z_{4}\|_{\infty}$.}

We have
\begin{align*}
    Z_4 &= H((A-\hat{p}\mathds{1}_{n_1}\mathds{1}_{n_2}^\top )(A-\hat{p}\mathds{1}_{n_1}\mathds{1}_{n_2}^\top )^{\top} - (A-p\mathds{1}_{n_1}\mathds{1}_{n_2}^\top )(A-p\mathds{1}_{n_1}\mathds{1}_{n_2}^\top )^{\top})\\
    & = H( (p-\hat{p})(A\mathds{1}_{n_2}\mathds{1}_{n_1}^\top+ \mathds{1}_{n_1}\mathds{1}_{n_2}^\top A^\top) + ( (\hat{p} - p)^2 - 2p(p-\hat{p}) )n_2\mathds{1}_{n_1}\mathds{1}_{n_1}^\top )\\
    & = (p-\hat{p})H((W\mathds{1}_{n_2}\mathds{1}_{n_1}^\top+ \mathds{1}_{n_1}\mathds{1}_{n_2}^\top W^\top) \\
    &+ (\delta-1)p(n_{2+}-n_{2-})(\eta_{1}\mathds{1}_{n_1}^\top + \mathds{1}_{n_1}\eta_{1}^\top)   + (p-\hat{p} ) n_2\mathds{1}_{n_1}\mathds{1}_{n_1}^\top ).
\end{align*}
Since
\[
\hat{p} - p = \frac{(\delta-1)p(n_{1+}-n_{1-})(n_{2+}-n_{2-})}{n_1n_2} + \frac{1}{n_1n_2}\sum_{i,j}W_{ij}
\]
then, recalling that $ {|n_{i+}-n_{i-}|}/{n_i}\le \gamma_{i}$ for $i=1,2$, and setting $y: = \frac{1}{n_1n_2}\sum_{i,j}W_{ij}$, we have
$$
|\hat{p} - p| \leq |\delta-1|p\gamma_1\gamma_2 + |y|.
$$
Thus, using again the inequality $\|H(M)\|_\infty \le 2\|M\|_\infty$ and introducing the notation 
$L=\|W\mathds{1}_{n_2}\mathds{1}_{n_1}^\top+ \mathds{1}_{n_1}\mathds{1}_{n_2}^\top W^\top\|_{\infty} $ we obtain
\begin{align*}
\|Z_4\|_{\infty} &\leq 2|p-\hat{p}| L + 4|\delta-1||p-\hat{p}|pn_1n_2 +|p-\hat{p}|^2n_2\|H(\mathds{1}_{n_1}\mathds{1}_{n_1}^\top)\|_{\infty}\\
&\leq 
2|\hat{p}-p|L +  4(\delta-1)^2p^2n_1n_2\gamma_1\gamma_2+ 4 |y|pn_1n_2  +|p-\hat{p}|^2n_1n_2\\
&\leq
 V+ 6(\delta-1)^2p^2n_1n_2\gamma_1\gamma_2,
\end{align*}
where
$$
V =
2|\hat{p}-p|L + 4 |y|pn_1n_2 +2y^2n_1n_2.
$$
Now, note that since $W_{ij}$ are zero mean random variables
\begin{equation*}%\label{eq:hatpp}
\mathbb{E}(y^2) \leq   \frac{2p}{n_1n_2}, \quad \mathbb{E}(|\hat{p} - p|^2)\le p^2 +   \frac{2p}{n_1n_2} .
\end{equation*} 
Moreover, by the same argument as in the control of $\|Z_2\|_{\infty}$,
\[
\mathbb{E}(L^2)=\mathbb{E}(\|W\mathds{1}_{n_2}\mathds{1}_{n_1}^\top+ \mathds{1}_{n_1}\mathds{1}_{n_2}^\top W^\top\|_{\infty}^2) \leq {32}n_2n_1^2p^3\le {8}n_2n_1^2p.
\]
Using these inequalities and the facts that $p\le 1/2$, $n_2\ge 2$ and $\sqrt{p^{2}n_{1}{n_{2}}}\ge C(\delta-1)^{-2}\sqrt{\log{n_1}}\ge 289\sqrt{\log{2}}$ we obtain
\begin{align*}
\mathbb{E}(V) &\leq 2\sqrt{\mathbb{E}(|\hat{p} - p|^2)}\sqrt{\mathbb{E}(L^2)} 
+ 4\sqrt{\mathbb{E}(y^2)}pn_1n_2 +2\mathbb{E}(y^2)n_1n_2\\
&\leq 2 n_1 \sqrt{8n_2p}\sqrt{p^2 +\frac{2p}{n_1n_2}} + 4\sqrt{2n_1n_2p^3} + 4p \\
    &\leq 12\sqrt{p^{2}n_{1}{n_{2}}}(1+\sqrt{pn_1}).
\end{align*}
Putting the above arguments together and applying Markov inequality we get that, for $\gamma_1\gamma_2 \leq \sqrt{\alpha}/96$, 
\begin{align*}
\mathbb{P}\left(\|Z_{4}\|_{\infty} \geq \frac{\sqrt{\alpha}}{12}(\delta-1)^{2}p^{2}n_{1}{n_{2}}\right)  &\leq 
\mathbb{P}\left(V \geq \frac{\sqrt{\alpha}}{48}(\delta-1)^{2}p^{2}n_{1}{n_{2}}\right)\\
&\leq \frac{576 (1+\sqrt{pn_1}) }{(\delta-1)^{2}\sqrt{\alpha} \sqrt{p^{2}n_{1}n_{2}}}
\\&
\leq \frac{576}{(\delta-1)^2\sqrt{\alpha}} ((p^{2}n_{1}{n_{2}})^{-1/2} + (pn_2)^{-1/2}).
%\\
%&\leq \frac{2^{10}}{C \sqrt{\alpha}|\delta-1|}\left( \frac{|n_{1+}-n_{1-}|}{n_1}\frac{|n_{2+}-n_{2-}|}{n_2} +\frac{|\delta-1|^{-1}}{(n_1n_2)^{1/4}} \right).
\end{align*}
Recall that, by the assumptions of the theorem, we have $p \geq C (\delta-1)^{-2}\sqrt{\frac{\log n_{1}}{n_{1}n_{2}}}$, $p \geq C (\delta-1)^{-2}\frac{\log n_{1}}{n_{2}}$, and that we have chosen $C\ge 289/\sqrt{\alpha}$.  Using these inequalities and the facts that $|\delta-1|<1$, $\alpha\in (0,1)$ in the last display we find 
\begin{align*}
\mathbb{P}\left(\|Z_{4}\|_{\infty} \geq \frac{\sqrt{\alpha}}{12}(\delta-1)^{2}p^{2}n_{1}{n_{2}}\right) 
&\leq \frac{36}{\alpha^{1/4} |\delta-1|\sqrt{\log{n_1}}}.
\end{align*}
\end{itemize}
 In conclusion, we have proved that, for any $\alpha\in (0,1)$, $C\ge 289/\sqrt{\alpha}$ and  $\gamma_1\gamma_2 \leq \sqrt{\alpha}/96$ we have
 \begin{align}\label{eq:etta}
   %\underset{n_{1}\to \infty}{\lim} 
   \mathbb{P}\left(\frac{16}{(\delta-1)^{4}p^{4}n_{1}^{2}n_{2}^{2}} \| Z_1+Z_2 +Z_4 \|_{\infty}^2 \geq \alpha\right) & \le 2n_{1}^{-\frac{1}{288}} +
   \frac{9\cdot 2^{9}}{C \alpha  \log n_{1}}\\
   & + \frac{36}{\alpha^{1/4} |\delta-1|\sqrt{\log{n_1}}}.  \nonumber
\end{align}
Hence, if $\alpha\in (0,1)$,  $\gamma_1\gamma_2 \leq \sqrt{\alpha}/96$, there exists an absolute constant $C_0>0$ such that 
 for $C>C_0/\sqrt{\alpha}$ we have
\begin{equation}\label{weakR1}
  \underset{n_{1}\to \infty}{\lim}\mathbb{P}\left(\frac{1}{n_1} r(\eta_1, \eta_1^0) \geq \alpha\right) = 0.
\end{equation}
This proves part (i) of the theorem.
Next, if we assume that $\gamma_1\gamma_2\le 1/C'_{n_1}$ and set $C=C_{n_1}$ where $C_{n_1}, C'_{n_1}$ are any positive sequences that tend to infinity then \eqref{weakR1} holds simultaneously for all $\alpha\in (0,1)$, which proves almost full recovery.

\begin{lem}\label{lem:surrogate} For $\eta_1^0 = \sign (\hat v)$ we have
\begin{align*}
    \frac{1}{n_1} r(\eta_1, \eta_1^0) \leq 2\min_{\nu \in \{-1, 1\}} \left \| \frac{\nu}{\sqrt{n_{1}}}\eta_1 -  \hat{v}  \right\|_2^2.
    \end{align*}
\end{lem}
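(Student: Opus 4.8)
The plan is to unfold the definitions of $r(\eta_1,\cdot)$ and of $\eta_1^0=\sign(\hat v)$, and then compare, coordinate by coordinate, a sign disagreement with a Euclidean gap of size $1/\sqrt{n_1}$. Recall that
\[
r(\eta_1,\eta_1^0)=\min_{\nu\in\{-1,1\}}|\eta_1^0-\nu\eta_1|=\min_{\nu\in\{-1,1\}}2\sum_{i=1}^{n_1}\mathds{1}\big(\eta^0_{1i}\neq\nu\eta_{1i}\big),
\]
and that $\eta^0_{1i}=\sign(\hat v_i)$, while each component of $\nu\eta_1/\sqrt{n_1}$ has absolute value exactly $1/\sqrt{n_1}$.

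First I would fix $\nu_0\in\{-1,1\}$ attaining $\min_{\nu\in\{-1,1\}}\big\|\nu\eta_1/\sqrt{n_1}-\hat v\big\|_2^2$. Then, for each index $i$ with $\sign(\hat v_i)\neq\nu_0\eta_{1i}$, I would observe that $\hat v_i$ lies (weakly) on the side of $0$ opposite to $\nu_0\eta_{1i}/\sqrt{n_1}$, so that
\[
\Big|\hat v_i-\frac{\nu_0\eta_{1i}}{\sqrt{n_1}}\Big|^2\ \ge\ \frac{1}{n_1}.
\]
The only delicate point is the convention defining $\sign$ at $0$, and it is harmless: since $\nu_0\eta_{1i}\in\{-1,1\}$ is never $0$, a sign disagreement always forces the above gap, whether or not $\hat v_i=0$.

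Summing this bound over the disagreeing coordinates then gives
\[
\min_{\nu\in\{-1,1\}}\Big\|\frac{\nu}{\sqrt{n_1}}\eta_1-\hat v\Big\|_2^2=\Big\|\frac{\nu_0}{\sqrt{n_1}}\eta_1-\hat v\Big\|_2^2\ \ge\ \frac{1}{n_1}\sum_{i=1}^{n_1}\mathds{1}\big(\sign(\hat v_i)\neq\nu_0\eta_{1i}\big)=\frac{1}{2n_1}\,|\eta_1^0-\nu_0\eta_1|\ \ge\ \frac{1}{2n_1}\,r(\eta_1,\eta_1^0),
\]
where the last step just uses that $\nu_0$ need not be the minimizer of the Hamming distance. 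Rearranging yields $\tfrac1{n_1}r(\eta_1,\eta_1^0)\le 2\min_{\nu\in\{-1,1\}}\big\|\nu\eta_1/\sqrt{n_1}-\hat v\big\|_2^2$, which is the claim. I do not expect any real obstacle here: this is a soft, deterministic comparison between the $0$--$1$ clustering loss and the $\ell_2$ eigenvector error, and the argument is essentially the two displays above.
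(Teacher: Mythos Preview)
Your proof is correct and follows essentially the same route as the paper: both rest on the elementary coordinate-wise observation that a sign disagreement forces a squared gap of at least $1/n_1$ between $\hat v_i$ and $\nu\eta_{1i}/\sqrt{n_1}$, then sum and minimize over $\nu$. The paper just rescales first via $\hat b=\sqrt{n_1}\hat v$ and phrases the key bound as $(x-y)^2\ge \mathds{1}(x\neq\sign(y))$ for $x\in\{-1,1\}$, $y\in\mathbb{R}$, which is the same inequality you use.
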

\begin{proof}
By definition, $r(\eta_1, \eta_1^0)=2\min_{\nu \in \{-1, 1\}}  \sum_{i=1}^{n_1} \mathds{1}\left(\nu\eta_{1i} \neq \eta^0_{1i}\right).$
Set $\hat b = \hat v \sqrt{n_{1}}$. Then $\eta_1^0 = \sign (\hat b)$ and, for any $\nu \in \{-1, 1\}$,
$$
\left \| \frac{\nu}{\sqrt{n_{1}}}\eta_1 -  \hat{v}  \right\|_2^2= \frac{1}{n_1}  \| \nu\eta_1 -  \hat{b} \|_2^2\ge \frac{1}{n_1}\sum_{i=1}^{n_1} \mathds{1}\left(\nu\eta_{1i} \neq \eta^0_{1i}\right),
$$
where the last inequality is due to the fact that $(x-y)^2 \ge \mathds{1}\left(x \neq \sign (y)\right)$ for any $x\in \{-1, 1\}$ and $y\in\mathbb{R}$. 
\end{proof}

\subsection{Proof of {Theorem} \ref{th:th3}}

Note that the assumptions of Theorem \ref{th:th1}(i) are satisfied with $\alpha = 1/25$. Note also that 
$|\eta_1 -  \eta_1^0 |=n_1-\eta_1^\top \eta_1^0$. 
It follows from Theorem \ref{th:th1} and the definition of $r(\hat{\eta}_1, \eta_1)$ that with probability that tends to 1 as $n_1\to \infty$ we have
either $\frac1{n_1}\eta_1^\top \eta_1^0\geq 3/4$ or $\frac1{n_1}\eta_1^\top \eta_1^0\leq - 3/4$. Next, recall that
\begin{align*}
\Gamma :=H((A-\hat{p}\mathds{1}_{n_1}\mathds{1}_{n_2}^\top )(A-\hat{p}\mathds{1}_{n_1}\mathds{1}_{n_2}^\top )^{\top}) = (\delta-1)^2 p^2 n_2 \eta_1 \eta_1^\top + Z.
\end{align*}
From \eqref{eq:etta} we have
$$
\underset{n_{1} \to \infty}{\lim} \mathbb{P}\left(\|Z_1+Z_2+Z_4\|_{\infty} \geq \frac{1}{20}(\delta-1)^{2}p^{2}n_{1}{n_{2}}\right) = 0,
$$
using the same notation as in the proof of Theorem \ref{th:th1}. Observing that $\|Z_3\|_\infty = (\delta-1)^2p^2n_2$ we get moreover that
\begin{equation}\label{eq:zz}
	\underset{n_{1} \to \infty}{\lim} \mathbb{P}\left(\|Z\|_{\infty} \geq \frac{1}{16}(\delta-1)^{2}p^{2}n_{1}{n_{2}}\right) = 0.
\end{equation}
\
%\textcolor{red}{In what follows, we will denote $\eta_1$ (respectively, $\hat{\eta}_1$) by $\eta$ (respectively, by $\hat{\eta}$), since there is no possible confusion between $\eta_1$ and $\eta_2$ in this section.} \\
Define the following random events: 
\begin{align*}
    & O_i = \left\{ \left( \frac{1}{n_1} \Gamma_i \eta_1\right) {\eta_{1i}} \geq \frac{(\delta-1)^2}{2} p^2 n_2  \right\}, \quad i=1,\dots, n_1, \\
    & B = \left \{ \frac{1}{n_{1}}\| Z \|_{\infty} \leq \frac{1}{16}(\delta-1)^{2}p^{2}{n_{2}} \right \},
\end{align*}
where $\Gamma_i$ denotes the $i$th row of matrix $\Gamma$. From \eqref{eq:zz} we have that the probability of $B$ tends to 1 as 
$n_1\to \infty$.
We call $O_{i}$ the oracle events since they are similar to the events arising in the analysis of the supervised oracle procedure that, given the labels $(\eta_{1j}, j\ne i)$, estimates the label $\eta_{1i}$. The proof is decomposed in three steps that we detail in what follows.
\begin{itemize}
    \item \textbf{Proving the contraction.}\\
We place ourselves on the random event $B\cap O_1 \cap \dots \cap O_{n_1}$. Our first goal is to prove that if $\frac1{n_1}\eta_1^{\top}\hat{\eta}^{k} \geq 3/4$, then $|\hat{\eta}^{k+1} - \eta_1| \leq \frac{1}{4} |\hat{\eta}^{k} - \eta_1| 
$  and $\frac1{n_1}\eta_1^{\top}\hat{\eta}^{k+1} \geq 3/4$.  We have
\begin{align*}
    \frac{1}{n_1} \Gamma_i \hat{\eta}^k =& \frac{1}{n_{1}}z_i^\top (\hat{\eta}^k - \eta_1) + \frac{1}{n_1} \Gamma_i \eta_1 \\
    &- (\delta-1)^2p^2 n_2 \eta_{1i} \left(1 - \frac{1}{n_1}\eta_1^\top \hat{\eta}^k \right),
\end{align*}
where $z_i$ is a column vector equal to the transposed $i$th row of matrix $Z$.
Hence, if $\eta_{1i}=-1$ then 

\begin{align*}
    \frac{1}{n_1} \Gamma_i \hat{\eta}^k \leq \frac{1}{n_1}z_i^\top (\hat{\eta}^k - \eta_1) - \frac{(\delta-1)^2}{4} p^2 n_2.
\end{align*}
It follows that
\begin{align*}
    \mathds{1}_{\left\{\frac{1}{n_1} \Gamma_i \hat{\eta}^k \geq 0 \right\}} \leq \mathds{1}_{\left \{ \frac{1}{n_1}z_i^\top (\hat{\eta}^k - \eta_1) \geq \frac{(\delta-1)^2}{4} p^2 n_2 \right \}} \leq 
    \left( \frac{4z_i^\top (\hat{\eta}^k - \eta_1)}{n_1(\delta-1)^2 p^2 n_2} \right)^2.
\end{align*}
Similarly, if $\eta_{1i} = 1$ then
\begin{align*}
    \mathds{1}_{\left\{\frac{1}{n_1} \Gamma_i \hat{\eta}^k \leq 0 \right\}} \leq
     \left(  \frac{4z_i^\top (\hat{\eta}^k - \eta_1)}{n_1(\delta-1)^2 p^2 n_2} \right)^2.
\end{align*}
Now,
\begin{align*}
    \frac{1}{2} \rvert \hat{\eta}^{k+1} - \eta_1 \rvert= \sum_{i = 1}^{n_1} \mathds{1}_{\big\{\frac{1}{n_1} \Gamma_i \hat{\eta}^k \geq 0 \big\}}
    \mathds{1}_{\eta_{1i} = -1} + \sum_{i = 1}^{n_1} \mathds{1}_{\big\{\frac{1}{n_1} \Gamma_i \hat{\eta}^k \leq 0 \big\}}\mathds{1}_{\eta_{1i} = 1}.
\end{align*}
Hence, we get
\begin{align}
\label{eq:eq5}
    \frac{1}{2n_1} \rvert \hat{\eta}^{k+1} - \eta_1 \rvert \leq \left(\frac{4\| Z \|_{\infty}}{n_{1}(\delta-1)^2p^2n_2}\right)^{2} \ \frac{\| \hat{\eta}^k - \eta_1 \|_2^2}{n_1} \leq \frac{1}{8n_1} \rvert \hat{\eta}^{k} - \eta \rvert.
\end{align}
The fact that $\frac1{n_1}\eta_1^{\top}\hat{\eta}^{k+1} \geq 3/4$ follows immediately from the inequality $|\hat{\eta}^{k+1} - \eta_1| \leq \frac{1}{4} |\hat{\eta}^{k} - \eta_1| 
$ and the relation
$\rvert \hat{\eta}^{k} - \eta_1 \rvert=n_1-\eta_1^{\top} \hat{\eta}^{k}$.

Quite analogously, we find that that if $\frac1{n_1}\eta_1^{\top}\hat{\eta}^{k} \leq - 3/4$, then $|\hat{\eta}^{k+1} + \eta_1| \leq \frac{1}{4} |\hat{\eta}^{k} + \eta_1|.$ 

\item \textbf{Reduction to the oracle events.}\\
   Assume that the event $B\cap O_1 \cap \dots \cap O_{n_1}$ holds. Let first $\frac{1}{n_1}\eta_1^{\top} {\eta}^{0}_1 \geq 3/4$. Since $\rvert {\eta}^{0}_1 - \eta_1 \rvert=n_1-\eta_1^{\top} {\eta}^{0}_1$  we get
\begin{align*}
    \frac{1}{n_1} \rvert \hat{\eta}^k - \eta_1 \rvert \leq
    \frac{1}{n_1} \rvert {\eta}^{0}_1 - \eta_1 \rvert \left( \frac{1}{4} \right)^k \le \left( \frac{1}{4} \right)^{k+1}.
\end{align*}
For $k > \frac{\log n_1}{2\log 2}-\frac{3}{2}$ we have 
\begin{align*}
    \left( \frac{1}{4}\right)^{k+1} < \frac{2}{n_1},
\end{align*}
so that 
\begin{align*}
     \rvert \hat{\eta}^k - \eta_1 \rvert = 0.
\end{align*}
Quite similarly we prove that if $\frac{1}{n_1}\eta_1^{\top} \hat{\eta}^{k} \leq - 3/4$ then, 
for $k > \frac{\log n_1}{2\log 2}-\frac{3}{2}$,
\begin{align*}
     \rvert \hat{\eta}^k + \eta_1 \rvert = 0.
\end{align*}
Recalling the definition of $r(\hat{\eta}^k, \eta_1)$ we conclude that 
$$
\mathbb{P}\left( r(\hat{\eta}^k, \eta_1) \neq 0 \right) \leq \mathbb{P}(B^{c}) + \sum_{i=1}^{n_1}\mathbb{P}(O_{i}^c).
$$
It follows from \eqref{eq:zz} that $\underset{n_1 \to \infty}{\lim}\mathbb{P}(B^{c}) = 0$. Thus, the proof of the theorem will be complete if we show that 
\begin{equation}\label{eq:qqq}
	 \underset{n_{1} \to \infty}{\lim}\sum_{i=1}^{n_1}\mathbb{P}(O_{i}^c) = 0.
\end{equation}
 
 \item \textbf{Control of the oracle events.}\\
 We proceed now to the proof of \eqref{eq:qqq}.
 \noindent Let $G_1, \dots, G_{n_1}$ be the column vectors equal to the transposed rows of matrix $G:=A-\hat{p} \mathds{1}_{n_1}\mathds{1}_{n_2}^{\top}= (p-\hat{p}) \mathds{1}_{n_1}\mathds{1}_{n_2}^{\top}+ (\delta-1)p \eta_{1}\eta_{2}^{\top} + W$.  For all $i=1,\dots,n_1$, we have
\begin{align*}
\mathbb{P}(O_{i}^c)=\mathbb{P} \left(\eta_{1i} G_{i}^{\top}\left(\sum_{k \neq i} \eta_{1k} G_{k}\right) < \frac{(\delta-1)^2}{2}p^2n_2n_1 \right). 
\end{align*}
Denoting by  $X_{1},\dots,X_{n_1}$ the column vectors equal to the transposed rows of matrix $W$ we may write $\eta_{1i}G_i = v_i + \eta_{1i}X_i$, where $v_i = \eta_{1i}(p-\hat{p})\mathds{1}_{n_2}+ (\delta-1)p\eta_2 $. Therefore, 
\begin{align*}
\eta_{1i} G_{i}^{\top}\left(\sum_{k \neq i} \eta_{1k} G_{k}\right)& = 
(v_i^\top + \eta_{1i}X_i^{\top})\left(\sum_{k \neq i} v_k + \sum_{k \neq i}\eta_{1k}X_k\right)
\\
& = (\delta-1)^2p^2n_2(n_1-1) + T_1 + T_2 +T_3 + T_4,
\end{align*}
where
$$
T_1 =  \eta_{1i}  \sum_{k \neq i}X_i^\top v_k,
\quad T_2=   \sum_{k \neq i}\eta_{1k}v_i^\top X_k,
$$
$$
T_3=
\eta_{1i}\sum_{k \neq i}\eta_{1k}X_i^{\top}X_k,
\quad T_4 = \sum_{k \neq i} v_i^\top v_k - (\delta-1)^2p^2n_2(n_1-1)
$$
and we obtain
$$
\mathbb{P}(O_{i}^c)= \mathbb{P}\left(-T_1 - T_2 -T_3 - T_4 > (\delta-1)^2p^2n_2(n_1/2 - 1)\right).
$$
We now bound from above the four corresponding probabilities. 
First, recall that
\[
|\hat{p} - p| \le |\delta-1|p\gamma_1\gamma_2+ \Big|\frac{1}{n_1n_2}\sum_{i,j}W_{ij}\Big|\le \frac{|\delta-1|p}{480} + \Big|\frac{1}{n_1n_2}\sum_{i,j}W_{ij}\Big|. 
\]
The entries $W_{ij}$ of matrix $W$ are independent zero-mean random variables distributed as  $\zeta - \mathbb{E}(\zeta)$ where $\zeta$ is a Bernoulli random variable with parameter $\delta p$ or $(2-\delta)p$. As $W_{ij}$ are bounded in absolute value by 1 and have variances bounded by $2p$ we get from Bernstein's inequality that 
\begin{equation}\label{last-1}
  \mathbb{P}(|\hat{p}-p|\geq |\delta-1|p/64) \leq 2 e^{-c(\delta-1)^2n_1n_2p}.  
\end{equation}
%where the last inequality uses the assumption that $p \geq C(\delta-1)^{-2}\sqrt{\frac{\log n_1}{n_1 n_2}}$.
Here and below we denote by $c$ absolute positive constants that may vary from line to line. 
Next, on the event $|\hat{p}-p|\leq |\delta-1|p/64$ we have
\begin{align*}
|T_1|&\le |\mathds{1}_{n_2}^\top X_i|\Big|\sum_{k \neq i}\eta_{1k}(p-\hat{p})\Big| + |\delta-1|(n_1-1)p |\eta_{2}^\top X_i|\\
&\le |\delta-1|pn_1(|\mathds{1}_{n_2}^\top X_i| +  |\eta_{2}^\top X_i|).
\end{align*}
Here, $\mathds{1}_{n_2}^\top X_i$ and $\eta_{2}^\top X_i$ are two sums of $n_2$ independent zero-mean random variables  bounded in absolute value by 1 and with variances bounded by $2p$.
Using these remarks, Bernstein's inequality and \eqref{last-1} we obtain that, for $n_1\ge 4$,
\begin{align*}
&\mathbb{P}\left( |T_1| \geq \frac{1}{4}(\delta-1)^2p^2n_2(n_1/2 - 1) \right)  \\
&\leq \mathbb{P}\left(
|\mathds{1}_{n_2}^\top X_i| +  |\eta_{2}^\top X_i|\geq \frac{1}{16}|\delta-1|p n_2 \right)   + \mathbb{P}(|\hat{p}-p| \geq |\delta-1|p/64)\\
& \leq 4\exp \left( -c(\delta-1)^2pn_{2}\right) + \mathbb{P}(|\hat{p}-p| \geq |\delta-1|p/64) \\ &\leq 6\exp \left(-c \, C\log n_1 
\right)
 \leq \frac{1}{n_1^{2}}
\end{align*}
where we have used the assumption that %$p \geq C(\delta-1)^{-2}\sqrt{\frac{\log n_1}{n_1 n_2}}$ and 
$ p \geq C(\delta-1)^{-2}\frac{\log n_1}{ n_2}$ for some $C>0$ large enough. Quite analogous application of Bernstein's inequality, this time to two sums of $n_2(n_1-1)$ random variables, yields the bound
\begin{align*}
& \mathbb{P}\left( - T_2 \geq \frac{1}{4}(\delta-1)^2p^2n_2(n_1/2 - 1) \right)
\\
& \qquad \leq \mathbb{P}\left( - \sum_{k \neq i} \eta_{1k}v_i^\top X_k \geq \frac{1}{16}(\delta-1)^2p^2n_1n_2 \right)\\
&\qquad \leq 6 \exp\left(- c (\delta-1)^{2}p n_1 n_{2} \right) 
\\ &\qquad \leq 6 \exp \left(- c \,C n_1\log n_1 \right)
 \leq \frac{1}{n_1^{2}}.
\end{align*}
Next, we consider the term $T_3=
\eta_{1i}\sum_{k \neq i}\eta_{1k}X_i^{\top}X_k$. We have
\begin{align*}
    &\mathbb{P}\left( -T_3\geq \frac{1}{4}(\delta-1)^2p^2n_2n_1 \right) \\ &\qquad\leq 
    \mathbb{E}\left[\mathbb{P}\left( - T_3\geq \frac{1}{4}(\delta-1)^2p^2n_2n_1\Big\vert X_i \right)\mathds{1}_{F_i}\right] + \mathbb{P}(F_i^c),
\end{align*}
where $F_i= \{ \|X_{i}\|_{2}^{2} \leq 6n_{2}p\}$. Recall that $\|X_{i}\|_{2}^{2} = \sum_{j=1}^{n_2}W_{ij}^2$ where $W_{ij}$ are the elements of matrix $W$.
We now apply Bernstein's inequality conditionally 
on $X_i$ to the random variable $T_3$, which is (conditionally 
on $X_i$) a sum of $n_2(n_1-1)$ independent zero-mean random variables 
bounded in absolute value by 1 and with the sum of variances bounded by $2p(n_1-1)\|X_{i}\|_{2}^{2}$. It follows from Bernstein's inequality that  
for any fixed $X_i\in F_i$ we have
\begin{align*}
&\mathbb{P}\left( -\eta_{1i}\sum_{k \neq i}\eta_{1k}X_i^{\top}X_k\geq 
\frac{1}{4}(\delta-1)^2p^2n_2n_1 \Big\vert X_i\right) 
\\
&\qquad\leq 
\exp \left(-\frac{c(\delta-1)^4p^4n_2^2n_1^2}{p n_1\|X_{i}\|_{2}^{2}+(\delta-1)^2p^2n_2n_1}\right) 
\\
& \qquad
\le \exp \left(- c(\delta-1)^4p^2n_2n_1\right)
\leq \frac{1}{n_{1}^2},
\end{align*}
where the last inequality is valid if $C>0$ is large enough. Applying once more Bernstein's inequality
we obtain the bound 
$$ \mathbb{P}(F_i^c) \leq  \mathbb{P}\left( \sum_{j=1}^{n_2}(W_{ij}^2 - \mathbb{E}(W_{ij}^2 ) )\geq 4n_2p \right) \leq \exp \left( -c n_{2}p\right) \leq \frac{1}{n_{1}^2}
$$
 if $C>0$ is large enough. 
Finally, we consider the term $T_4 = \sum_{k \neq i} v_i^\top v_k - (\delta-1)^2p^2n_2(n_1-1)$. We have
\begin{align*}
|T_4|&\le \Big|\eta_{1i}(p-\hat{p})^2n_2\sum_{k \neq i}\eta_{1k}\Big| +
\Big|(\delta-1)p(p-\hat{p})(\eta_{2}^\top\mathds{1}_{n_2})\sum_{k \neq i}\eta_{1k}\Big|\\
& \quad +  
\Big|\eta_{1i}(\delta-1)p(p-\hat{p})(n_1-1)(\eta_{2}^\top\mathds{1}_{n_2})\Big|
\\
&\le n_1n_2 (p-\hat{p})^2 + 2|\delta-1|pn_1n_2 |\hat{p}-p|.
\end{align*}
Therefore, on the event $ |\hat{p}-p|\leq p|\delta-1|/64$ we have $|T_4|<\frac{1}{16}(\delta-1)^2p^2n_1n_2$, which implies that for $n_1\ge 4$ and $C>0$ large enough,
\begin{align*}
   &\mathbb{P}\left(- T_4 \geq \frac{1}{4} (\delta-1)^2p^2n_2(n_1/2 - 1)\right)
   \\
   &\qquad\leq \mathbb{P}( |\hat{p}-p|\geq p|\delta-1|/64)
   %\\
   %&\leq \frac{1}{n_1^2}.
%\end{align*}
\leq 2\exp \left(-c \, Cn_1\log n_1 
\right)
 \leq \frac{1}{n_1^{2}},
\end{align*}
where we have used \eqref{last-1} and the assumption that 
$ p \geq C(\delta-1)^{-2}\frac{\log n_1}{ n_2}$ for some $C>0$ large enough.
Combining the above inequalities we find that, for $C>0$ large enough, 
$$
\sum_{i=1}^{n_1} \mathbb{P}(O_{i}^c) \leq \frac{4}{n_{1}} \underset{n_1 \to \infty}{\to} 0.
$$
This proves \eqref{eq:qqq} and hence the theorem.
\end{itemize}

\end{document}